\definecolor{darkgreen}{rgb}{0,.5,0}
\definecolor{darkpurple}{rgb}{.5,0,.5}
\newcommand{\bsm}{\left(\begin{smallmatrix}}
\newcommand{\esm}{\end{smallmatrix}\right)}
\newenvironment{customthm}[1]
  {\innercustomthm}
  {\endinnercustomthm}
\newtheorem{theorem}{Theorem}[section]
\newtheorem{lemma}[theorem]{Lemma}
\newtheorem{proposition}[theorem]{Proposition}
\newtheorem{question}[theorem]{Question}
\theoremstyle{definition}
\newtheorem{definition}[theorem]{Definition}
\newtheorem{example}[theorem]{Example}
\newtheorem{observation}[theorem]{Observation}
\newtheorem{remark}[theorem]{Remark}
\newtheorem{construction}[theorem]{Construction}
\newtheorem{convention}[theorem]{Convention}
\newtheorem{notation}[theorem]{Notation}
\newtheorem{claim}{Claim}
\newtheorem*{claim*}{Claim}
\newcommand{\N}{\mathbb{N}}
\newcommand{\Z}{\mathbb{Z}}
\newcommand{\Q}{\mathbb{Q}}
\newcommand{\C}{\mathbb{C}}
\newcommand{\im}{\operatorname{im}}
\newcommand{\PD}{\operatorname{PD}}
\newcommand{\rk}{\operatorname{rk}}
\newcommand{\ev}{\operatorname{ev}}
\newcommand{\BS}{\operatorname{BS}}
\newcommand{\aug}{\operatorname{aug}}
\newcommand{\Hom}{\operatorname{Hom}}
\newcommand{\id}{\operatorname{id}}
\newcommand{\Homeo}{\operatorname{Homeo}}
\newcommand{\Bl}{\operatorname{Bl}}
\newcommand{\core}{\operatorname{core}}
\newcommand{\cocore}{\operatorname{cocore}}
\newcommand{\Aut}{\operatorname{Aut}}
\newcommand{\unaryminus}{\scalebox{0.75}[1.0]{\( - \)}}
\newcommand{\obar}[1]{\mkern 1.5mu\overline{\mkern-1.5mu#1\mkern-1.5mu}\mkern 1.5mu}
\newcommand{\ubar}[1]{\mkern 1.5mu\underline{\mkern-1.5mu#1\mkern-1.5mu}\mkern 1.5mu}
\newcommand{\Vtu}{\smash{\obar{V}_0^{\tau}}}
\newcommand{\Vtl}{\smash{\ubar{V}_0^{\tau}}}
\newcommand{\Vitu}{\smash{\obar{V}_0^{\iota\tau}}}
\newcommand{\Vitl}{\smash{\ubar{V}_0^{\iota\tau}}}
\newcommand{\tmfrac}[2]{\mbox{\large$\frac{#1}{#2}$}}
\newcommand{\cpring}{(\mathbb{C}P^2)^\circ}
\begin{document}
\title{$\Z$-disks in $\C P^2$}

\author[A.~Conway]{Anthony Conway}
\address{The University of Texas at Austin, Austin TX 78712}
\email{anthony.conway@austin.utexas.edu}
\author[I.~Dai]{Irving Dai}
\address{The University of Texas at Austin, Austin TX 78712}
\email{irving.dai@math.utexas.edu}
\author[M.~Miller]{Maggie Miller}
\address{The University of Texas at Austin, Austin TX 78712}
\email{maggie.miller.math@gmail.com}

\begin{abstract}
We study 
locally flat disks in~$\cpring:=(\C P^2)\setminus \mathring{B}^4$ with boundary a fixed knot~$K$ and whose complement has fundamental group $\mathbb{Z}$.
We show that up to topological isotopy rel.\ boundary, such disks necessarily arise by performing a positive crossing change on $K$ to an Alexander polynomial one knot and capping off with a $\Z$-disk in $D^4.$ 
Such a crossing change determines a loop in $S^3 \setminus  K$ and we prove that the homology class of its lift to the infinite cyclic cover leads to a complete invariant of the disk. 
We prove that this determines a bijection between the set of rel.\ boundary topological isotopy classes of~$\Z$-disks with boundary $K$ and a quotient of the set of unitary units of the ring~$\Z[t^{\pm 1}]/(\Delta_K)$.
Number-theoretic considerations allow us to deduce that a knot~$K \subset S^3$ with quadratic Alexander polynomial bounds $0,1,2,4$, or infinitely many $\Z$-disks in $\cpring$.
This leads to the first examples of knots bounding infinitely many topologically distinct disks whose exteriors have the same fundamental group and equivariant intersection form.
Finally we give several examples where these disks are realized smoothly.
\end{abstract}
\maketitle

\section{Introduction}

Freedman proved that a knot $K \subset S^3$ bounds a locally flat disk $D \subset D^4$ with~$\pi_1(D^4 \setminus D) \cong \Z$ if and only if $K$ has Alexander polynomial one~\cite{Freedman}.
It is additionally known that if $K$ bounds such a disk, then it is unique up to 
isotopy rel.\  boundary~\cite{ConwayPowellDiscs}.
In other words, the set $\mathcal{D}_\Z(K,D^4)$ of  rel.\  boundary isotopy classes of~\emph{$\Z$-disks} in $D^4$ with boundary $K$ is nonempty if and only if $\Delta_K \doteq 1$ in which case it contains a single element.

This article describes the classification of~$\Z$-disks in $\cpring:=\C P^2 \setminus \mathring{B}^4$ with boundary a knot~$K$: we list necessary and sufficient conditions for the existence of such disks (Theorem~\ref{thm:ExistenceIntro}) and construct an explicit bijection from~$\mathcal{D}_\Z(K,\cpring)$ to a subset of the Alexander module (Theorem~\ref{thm:DiscsWithoutCPP22Intro}). 
The outcome is quite different than in $D^4$: when $\mathcal{D}_\Z(K,\cpring)$ is nonempty,  it rarely consists of a single element and we use some number theory to show that if $K$ has trivial or quadratic Alexander polynomial (e.g.\ if $K$ has genus one), then~$\mathcal{D}_\Z(K,\cpring)$ must have cardinality $0,1,2,4$, or be infinite~(Theorem~\ref{thm:UniquenessIntro}).

\medbreak

In what follows, a $4$-manifold is understood to mean a compact, connected, oriented, topological $4$-manifold and embeddings are understood to be locally flat.
Knots are assumed to be oriented.
Given a closed simply-connected $4$-manifold $X$,  we write $X^\circ:=X \setminus \mathring{B}^4$, and a \emph{$\Z$-disk in $X^\circ$} refers to a properly embedded disk~$D \subset X^\circ$ whose complement has fundamental group~$\Z$.
The knot~$K=\partial D$ is then called \emph{$\Z$-slice} in $X^\circ$.

\subsection{Existence of $\Z$-disks in $\cpring$.}
\label{sub:ExistenceIntro}

Necessary and sufficient criteria for deciding whether a knot $K$ is $\Z$-slice in $\cpring$ can be deduced from work of Borodzik and Friedl~\cite{BorodzikFriedlClassical1}.
To state this result,  use $E_K$ to denote the exterior of~$K$, and recall that the Blanchfield form of~$K$ is a nonsingular, sesquilinear, Hermitian form
$$ \Bl_K \colon H_1(E_K^\infty) \times H_1(E_K^\infty) \to \Q(t)/\Z[t^{\pm 1}]$$
which, roughly speaking,  measures equivariant linking in the infinite cyclic cover $E_K^\infty$ of $E_K$.
We say that $\Bl_K$ is \emph{represented} by a size $n$ Hermitian matrix~$A$ with $\det(A) \neq 0$ and coefficients in~$\Z[t^{\pm 1}]$ if~$\Bl_K$ is isometric to the following linking form:
\begin{align}
\label{eq:presents}
\Z[t^{\pm 1}]^n/A^T\Z[t^{\pm 1}]^n \times \Z[t^{\pm 1}]^n/A^T\Z[t^{\pm 1}]^n  \to \Q(t)/\Z[t^{\pm 1}] \\
([x],[y]) \mapsto -x^TA^{-1}\overline{y}. \nonumber
\end{align}
If such a matrix has size $1$, then it is a symmetric polynomial $p(t)$ and in this case, we say that~$\Bl_K$ is \emph{represented by $p(t)$}.
Here and in what follows,  we write $\Delta_K$ for the unique symmetric representative of the Alexander polynomial of $K$ that evaluates to $-1$ at $t=1$; the reason for this convention will be elucidated in Section~\ref{sub:Genus1}. Recalling that we are working in the topological category,
$\Z$-sliceness in~$\cpring$ can be characterised as follows; see Section~\ref{sec:Existence} for the proof.

\begin{theorem}
\label{thm:ExistenceIntro}
Given a knot~$K$, the following assertions are equivalent:
\begin{enumerate}
\item $K$ is~$\Z$-slice in~$\cpring$;
\item the Blanchfield form~$\Bl_K$ is presented by~$\unaryminus\Delta_K(t)$;
\item $K$ can be converted into an Alexander polynomial one knot by switching a single positive crossing to a negative crossing;
\item $K$ can be converted into an Alexander polynomial one knot by a single positive generalized crossing change.
\end{enumerate}
\end{theorem}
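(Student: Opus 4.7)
My plan is to establish the cycle $(3) \Rightarrow (4) \Rightarrow (1) \Rightarrow (3)$ together with the equivalence $(1) \Leftrightarrow (2)$. The implication $(3) \Rightarrow (4)$ is immediate since an ordinary crossing change is the special case of a generalized crossing change across a disk meeting $K$ in exactly two points.

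For $(4) \Rightarrow (1)$, I would use the classical blow-up construction. A positive generalized crossing change on $K$ is realized by $\mp 1$-surgery on a small unknot $U \subset S^3 \setminus K$ that bounds a disk meeting $K$ in $2n$ balanced points; with the sign convention normalizing $\Delta_K(1) = -1$, the trace of this surgery is $\cpring$. Viewing the trace as a cobordism from $K$ to $K'$ sitting inside $\cpring$ and capping off with Freedman's $\Z$-disk for $K'$ in $D^4$ yields a disk $D \subset \cpring$ with $\partial D = K$. I would verify $\pi_1(\cpring \setminus D) \cong \Z$ by a van Kampen computation, using the exceptional sphere as the 2-cell that kills the extra Wirtinger relation introduced by the surgery.

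For the reverse direction $(1) \Rightarrow (3)$, I would dualize via the blow-down construction. Given a $\Z$-disk $D \subset \cpring$, consider the exceptional sphere $\Sigma \subset \cpring$ with $\Sigma \cdot \Sigma = +1$. Since $D$ generates $H_2(\cpring, \partial) \cong \Z$, we have $D \cdot \Sigma = \pm 1$ algebraically. Using that $\pi_1(\cpring \setminus D) \cong \Z$ is a good group, Freedman's topological Whitney trick cancels opposite-sign pairs of intersections and reduces to $|D \cap \Sigma| = 1$ geometrically. Blowing down $\Sigma$ converts $D$ into a $\Z$-disk $D'' \subset D^4$ whose boundary $K''$ differs from $K$ by a single crossing change; the sign is forced to be positive-to-negative by $\Sigma \cdot \Sigma = +1$, and Freedman's theorem applied to $D''$ gives $\Delta_{K''} = 1$.

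For $(1) \Leftrightarrow (2)$, I would appeal to the classification results of Borodzik and Friedl~\cite{BorodzikFriedlClassical1}. Given a $\Z$-disk $D \subset \cpring$, the equivariant intersection form of the disk exterior presents $\Bl_K$ and has the same rank, signature, and discriminant as the intersection form of $\C P^2$, so it is represented by a symmetric polynomial $p(t)$ with $p(1) = 1$; a Poincar\'e--Lefschetz duality computation in the infinite cyclic cover identifies $p$ with $-\Delta_K$. Conversely, the realization direction of Borodzik--Friedl produces a $\Z$-disk in some simply connected $X^\circ$ with $H_2(X) = \Z$ and intersection form $(+1)$, and Freedman's classification forces $X = \C P^2$. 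The main obstacle throughout is controlling the fundamental group of the disk complement: in $(4) \Rightarrow (1)$ one must verify that no extra relations appear in $\pi_1$ after gluing, and in $(1) \Rightarrow (3)$ one must ensure that the Whitney moves can be carried out in the complement while preserving the $\pi_1 = \Z$ condition needed for the blow-down to produce a genuine $\Z$-disk in $D^4$.
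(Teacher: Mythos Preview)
Your argument for $(1) \Rightarrow (3)$ contains a genuine error that cannot be repaired within this approach. The claim that ``$D$ generates $H_2(\cpring,\partial)$'' is false: $\Z$-disks in $\cpring$ are \emph{nullhomologous}. Indeed, if $\Sigma$ is the exceptional sphere and $D \cdot \Sigma = k$, then $\Sigma \setminus \nu(D)$ is a surface in $\cpring \setminus D$ whose boundary is $k$ copies of the meridian $\mu$ of $D$; since $\mu$ generates $H_1(\cpring \setminus D) \cong \Z$ and has infinite order, this forces $k=0$. So $D \cdot \Sigma = 0$, not $\pm 1$.

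This is fatal for the blow-down strategy. If you could use Whitney moves to make $D$ and $\Sigma$ geometrically disjoint, blowing down $\Sigma$ would leave $K$ unchanged (it lives in $\partial \cpring = S^3$, which is untouched by an interior blow-down) bounding a $\Z$-disk in $D^4$, hence $\Delta_K=1$. Since the trefoil is $\Z$-slice in $\cpring$ with $\Delta_K \neq 1$, such disjointness cannot always be achieved, and in any case the conclusion ``$K'' $ differs from $K$ by a crossing change'' does not follow from a single transverse intersection: blowing down a $+1$-sphere meeting $D$ once simply caps the fiber disk and again yields a $\Z$-disk for $K$ itself in $D^4$. The knot in the boundary is never altered by an interior blow-down.

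The paper instead routes $(1) \Rightarrow (3)$ through $(2)$: one first computes that the equivariant intersection form of the disk exterior is the $1\times 1$ matrix $(-\Delta_K)$ (this is your $(1)\Rightarrow(2)$, which is correct), and then invokes Borodzik--Friedl's theorem that a Blanchfield form presented by a size-one matrix can be realized by a single crossing change to an Alexander polynomial one knot. Your $(3)\Rightarrow(4)\Rightarrow(1)$ and the forward direction of $(1)\Rightarrow(2)$ are essentially the same as the paper's; the missing implication $(2)\Rightarrow(3)$ is where the genuine content (Borodzik--Friedl) lies, and there is no known purely geometric shortcut of the kind you sketch.
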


Here, given a curve $\gamma \subset S^3 \setminus K$ with $\ell k(K,\gamma)=0$ that is unknotted in~$S^3$,  we say that a knot~$K'$ in~$S^3$ is obtained from $K$ by a {\emph{positive generalized crossing change}} along~$\gamma$ if~$(S^3_{-1}(\gamma),K)\cong(S^3,K')$.
As  illustrated in Figure \ref{fig:generalizedcrossingchange}, when only two strands of $K$ are involved, we omit the word ``generalized".
We note that we are also considering~$K \subset S^3$ as a knot in~$S^3\setminus\nu(\gamma)$ and hence in~$S^3_{-1}(\gamma)$.
We emphasize that the last three items of Theorem~\ref{thm:ExistenceIntro} were already known to be equivalent by work of Borodzik and Friedl~\cite{BorodzikFriedlLinking}.

\begin{figure}
\labellist
\pinlabel{$K$} at 23 137
\pinlabel{$K'$} at 134 137
\pinlabel{\textcolor{red}{$\gamma$}} at 67 115
\pinlabel{$K$} at 5 72
\pinlabel{$K'$} at 158 72
\pinlabel{\textcolor{red}{$\gamma$}} at 75 45
\pinlabel{\huge{$+1$}} at 122 38
\endlabellist
\vspace{.1in}
    \includegraphics[width=55mm]{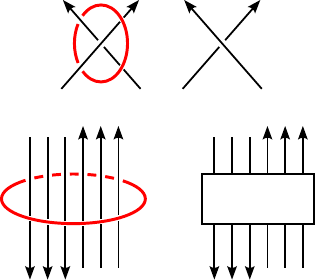}
    \caption{Top: a positive crossing change from $K$ to $K'$ realized as a (generalized) crossing change about a curve $\gamma$. Bottom: A generalized positive crossing change about $\gamma$ transforms $K$ into $K'$. In this example, $\gamma$ links $K$ geometrically more than two times (although as always links $K$ zero times algebraically).}\label{fig:generalizedcrossingchange}
\end{figure}

In summary,  Theorem~\ref{thm:ExistenceIntro} describes conditions for obstructing a knot from bounding a $\Z$-disk in $\cpring$ as well as ways to construct $\Z$-slice knots in~$\cpring$.
In Section~\ref{sub:Context} we list related work on the existence question for $\Z$-surfaces in $4$-manifolds with boundary $S^3$ such as~\cite{FellerLewarkOnClassical,FellerLewarkBalanced,
KjuchukovaMillerRaySakalli,ConwayPiccirilloPowell} but first we discuss the classification of $\Z$-disks in $\cpring$.

%
%

\subsection{Classification of $\Z$-disks in $\cpring$}

Assume now that a generalized crossing change along the \emph{surgery curve} $\gamma$ 
results in an Alexander polynomial one knot.
As we recall in more detail in Section~\ref{sec:CrossingChange},  one can associate to~$K$ and $\gamma$ a $\Z$-disk in~$\cpring$, that we refer to as a \emph{generalized-crossing-change~$\Z$-disk}.

From now on,  we fix a basepoint $z \in \partial E_K$ and a lift $\widetilde{z} \in \partial E_K^\infty$ of $z$ to the infinite cyclic cover.
When we refer to loops and lifts,  it is with respect to these basepoints.
We note in Proposition~\ref{prop:BelongstoG} below that the homology class of the lift to~$E_K^\infty$ of the surgery curve~$\gamma \subset E_K$ belongs to
\begin{equation}
\label{eq:GKIntro}
 \mathcal{G}_K:= \left\lbrace x \in H_1(E_K^\infty) \  \Big|  \ x \text{  is a generator and } \Bl_K(x,x)
=\frac{1}{\Delta_K} \right\rbrace.
\end{equation}
Observe that the set $\lbrace t^k \rbrace_{k \in \Z}$ acts on $ \mathcal{G}_K$ by multiplication.
The following theorem (which,  recall, takes place in the topological category) gives a complete characterization of $\Z$-disks in $\cpring$ as well as a complete invariant; see Section~\ref{sec:ClassifyCrossingChange} for the proof.

\newpage

\begin{theorem}
\label{thm:DiscsWithoutCPP22Intro}
Let $K$ be a knot.
\begin{enumerate}
\item  Every~$\Z$-disk in~$\cpring$ with boundary $K$ is isotopic rel.\ boundary to a crossing change~$\Z$-disk.
\item Two generalized crossing change~$\Z$-disks with boundary $K$ are isotopic rel.\  boundary if and only if the homology classes of the lifts of their surgery curves agree in the Alexander module $H_1(E_K^\infty)$ up to multiplication by $t^k$ for some~$k \in \Z$.
\item Mapping a generalized crossing change $\Z$-disk with boundary $K$ to the homology class of the lift of its surgery curve defines a bijection
$$\mathcal{D}_\Z(K,\cpring) \xrightarrow{\cong} \mathcal{G}_K/\lbrace t^k \rbrace_{k \in \Z}.$$
\end{enumerate}
\end{theorem}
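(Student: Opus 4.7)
The plan is to prove the three items sequentially, relying on the standard handle decomposition $\cpring = B^4 \cup h^{+1}$, where $h^{+1}$ is a $+1$-framed $2$-handle attached along an unknot in $S^3$, and on the topological Whitney trick, available because the exterior of a $\Z$-disk has fundamental group $\Z$.

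For item (1), I would let $C \subset \cpring$ be the cocore of $h^{+1}$: a locally flat disk with unknotted boundary $\eta \subset \partial \cpring$ whose class generates $H_2(\cpring, \partial \cpring)$. For a $\Z$-disk $D$ with $\partial D = K$, the algebraic intersection $[D]\cdot [C] = \pm 1$ is nondegenerate, so after a transverse perturbation any pair of opposite-sign intersections can be cancelled by embedded Whitney disks, obtained via Freedman's disk embedding theorem in the good group $\pi_1(\cpring\setminus D) = \Z$. This reduces $|D\cap C|$ to one. Using the unique intersection point, together with an arc on $D$ joining it to $K$ and an arc on $C$ joining it to $\eta$, we may isotope $C$ to a cocore $C'$ with boundary a curve $\gamma \subset S^3 \setminus \nu(K)$ of linking number zero that is disjoint from $D$. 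Cutting $\cpring$ along a tubular neighborhood of $C'$ yields a $B^4$ in which $D$ becomes a $\Z$-disk for the knot $K'$ obtained from $K$ by $-1$-surgery on $\gamma$; Freedman's theorem then forces $\Delta_{K'} \doteq 1$, so $\gamma$ realizes a positive generalized crossing change and $D$ is the associated crossing change $\Z$-disk.

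For item (2), the ``only if'' direction proceeds by tracking a chosen cocore disk through a rel-boundary isotopy $D_1 \simeq D_2$: a continuous family of cocores yields an isotopy of $\gamma_1$ to $\gamma_2$ in $E_K$ up to a basepoint-connecting path, which in the infinite cyclic cover corresponds precisely to multiplication of lifts by $t^k$. For the ``if'' direction, suppose $[\tilde \gamma_1] = t^k[\tilde \gamma_2]$; absorbing $t^k$ by conjugating $\gamma_2$ with $\mu_K^k$ reduces to $k = 0$, and we seek a rel-boundary isotopy $D_1 \simeq D_2$. The approach is to show that the disk exteriors $E_{D_1}, E_{D_2}$ are homeomorphic rel boundary via a meridian-preserving homeomorphism, using a Boyer-type classification of compact topological $4$-manifolds with $\pi_1 = \Z$, boundary $S^3_0(K)$, and equivariant intersection form represented by $-\Delta_K(t)$; extending across the disk tubular neighborhoods then promotes this to the desired ambient isotopy. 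Item (3) then packages the previous parts: well-definedness of $D \mapsto [\tilde \gamma] \bmod \langle t \rangle$ is Proposition~\ref{prop:BelongstoG}, injectivity is item (2), and surjectivity follows by choosing for each $x \in \mathcal{G}_K$ an embedded loop $\gamma \subset S^3 \setminus \nu(K)$ whose lift represents $x$; the Blanchfield condition $\Bl_K(x,x) = 1/\Delta_K$ combined with Theorem~\ref{thm:ExistenceIntro} then guarantees that $-1$-surgery on $\gamma$ produces an Alexander polynomial one knot.

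The main obstacle will be the sufficiency direction of item (2): translating an algebraic coincidence in $H_1(E_K^\infty)$ into a concrete rel-boundary ambient isotopy of topological disks. This hinges on a rel-boundary classification theorem for compact oriented topological $4$-manifolds with $\pi_1 = \Z$, boundary $S^3_0(K)$, and prescribed equivariant intersection form and meridional data. Once such a classification is in place, the remaining arguments should reduce to standard extension-of-homeomorphism manoeuvres.
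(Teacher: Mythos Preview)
Your approach for item~(1) contains an error at the outset: a $\Z$-disk $D$ is null-homologous in $H_2(\cpring,\partial\cpring)$ (see \cite[Lemma~5.1]{ConwayPowell}, as noted after Theorem~\ref{thm:ExistenceIntro}), so the algebraic intersection $[D]\cdot[C]$ with the cocore is $0$, not $\pm 1$. The correct goal is therefore to make $D$ and an isotoped cocore $C'$ \emph{disjoint}, after which $D$ would sit in $\cpring\setminus\nu(C')\cong B^4$ as a disk for the surgered knot. Even with this correction, two gaps remain: the Whitney cancellation step is not justified (Freedman's disk embedding theorem requires algebraically transverse spheres for the immersed Whitney disks, and none are apparent here), and you must still identify the resulting decomposition of $D$ with the specific model of Definition~\ref{def:CrossingChangeZdisc}, which is built from the trace of an ambient isotopy of $S^3$ rather than an arbitrary $\Z$-disk in $B^4$. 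For surjectivity in item~(3), choosing any embedded $\gamma$ with $[\tilde\gamma]=x$ is insufficient: $\gamma$ must be unknotted in $S^3$ and bound a disk meeting $K$ twice, which is precisely the nontrivial output of Borodzik--Friedl~\cite{BorodzikFriedlLinking} (recorded here as Proposition~\ref{prop:BF}).

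The paper sidesteps all of this with a purely algebraic argument. Given any $\Z$-disk $D$, it picks a generator $y\in H_2(N_D^\infty)$, sets $x=\partial_D(y^*)\in H_1(M_K^\infty)$, verifies $x\in\mathcal{G}_K$, and invokes Proposition~\ref{prop:BF} to realise $x$ by a genuine crossing change curve $\gamma$. A homological isotopy criterion (Proposition~\ref{prop:CP20BasisVersion}, built on \cite[Theorem~1.3]{ConwayPowell} and \cite[Corollary~C]{OrsonPowell}) then shows $D\simeq D_\gamma$ rel boundary; the same criterion handles both directions of item~(2) uniformly, with no cocore-tracking needed. Your ``if'' direction of~(2), invoking a Boyer-type rel-boundary classification, is in the right spirit and indeed aligns with what Proposition~\ref{prop:CP20BasisVersion} packages.
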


In summary, Theorem~\ref{thm:DiscsWithoutCPP22Intro} shows that all $\Z$-disks in $\cpring$ arise as crossing change $\Z$-disks and that the lift of the surgery curve leads to a complete invariant.
In Section~\ref{sub:Context} we compare Theorem~\ref{thm:DiscsWithoutCPP22Intro} to the classification of~$\Z$-surfaces obtained in~\cite{ConwayPiccirilloPowell} but first we describe how the bijection with~$\mathcal{G}_K/\lbrace t^k \rbrace_{k \in \Z}$ allows us to explicitly enumerate $\Z$-disks when $K$ has trivial or quadratic Alexander polynomial.

\begin{figure}[!htbp]
\centering
\labellist
\pinlabel{$K_n$} at 20 90
\pinlabel{$K_1$} at 137 90
\pinlabel{$K_{-1}$} at 243 90
\small
\pinlabel{\rotatebox{90}{$n$}} at 51 77
\endlabellist
\includegraphics[width=80mm]{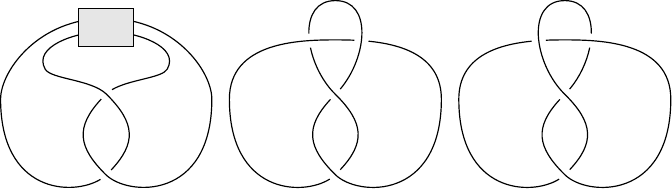}
\caption{{\bf{Left:}} The twist knot $K_n$ where the box denotes $n$ full twists. {\bf{Middle:}} The figure eight knot $K_1$. {\bf{Right:}} The right-handed trefoil $K_{-1}$.}\label{fig:twistknot}
\end{figure}

\subsection{Knots with quadratic Alexander polynomial: an explicit enumeration}
\label{sub:Genus1}
Alexander polynomial one knots bound a unique $\Z$-disk in $\cpring$ up to isotopy rel.\ boundary; this is a consequence of Theorem~\ref{thm:DiscsWithoutCPP22Intro} but also follows from the combination of~\cite[Theorem 1.2]{ConwayPowell} and~\cite[Theorem C]{OrsonPowell}.
When the polynomial is nontrivial, some number theory is needed to understand the cardinality of~$\mathcal{G}_K/\lbrace t^k \rbrace_{k \in \Z}$, as we now explain.
If the Alexander polynomial of a knot is trivial or quadratic (e.g. if $K$ is a genus one knot),  then up to multiplication by $\pm t^k$,  it is of the form
$$\Delta_n:=nt-(2n+1)+nt^{-1}$$
for some $n \in \Z$. 
Note that $\Delta_0=1, \Delta_{-1}=-t+1-t^{-1}$ is the Alexander polynomial of the trefoil,~$\Delta_1$ is the Alexander polynomial of the figure eight knot, and more generally, $\Delta_n$ is the Alexander polynomial of the twist knot $K_n$ with $n$ full twists illustrated in Figure \ref{fig:twistknot}.
Twists knots are genus one knots that are~$\Z$-slice in~$\cpring$ because they can be unknotted by switching a single positive crossing change; recall Theorem~\ref{thm:ExistenceIntro}.
We find it more convenient to index the~$\Delta_n$ using their leading coefficient which is why we work with a symmetric representative of the Alexander polynomial that satisfies~$\Delta_K(1)=-1$: this way $\Delta_{K_n}=\Delta_n$.

Our next result enumerates $\Z$-disks in $\cpring$ for knots with trivial or quadratic Alexander polynomial; its proof follows by combining Theorem~\ref{thm:DiscsWithoutCPP22Intro} with the number theoretic Theorem~\ref{thm:NumberTheoryIntro}.

\begin{theorem}
\label{thm:UniquenessIntro}
If a knot $K$ with trivial or quadratic Alexander polynomial is $\Z$-slice in~$\cpring$, then it bounds $1,2,4$ or infinitely many disks up to isotopy rel.\ boundary. More precisely, up to isotopy rel.\ boundary
$$
K \text{ bounds }
\begin{cases}
1 \text{ disk} & \quad \text{if $\Delta_K=\Delta_0,\Delta_{-1}$}, \\
2 \text{ disks} & \quad \text{if $\Delta_K=\Delta_1,\Delta_2$, or $\Delta_K=\Delta_n$ for $n=-p^k$ with $p$ prime and $k$ odd}, \\
4 \text{ disks} &  \quad \text{if $\Delta_K=\Delta_n$ for $n=-p^k$ with $p$ prime and $k$ even}, \\
\infty \text{ disks} & \quad \text{otherwise}.\\
\end{cases}
$$
\end{theorem}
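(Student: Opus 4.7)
The plan is to combine the two ingredients the theorem promises: the bijection $\mathcal{D}_\Z(K,\cpring)\cong\mathcal{G}_K/\lbrace t^k\rbrace_{k\in\Z}$ from Theorem~\ref{thm:DiscsWithoutCPP22Intro}, together with a number-theoretic count of the right-hand side for the specific $\Delta_K$ at hand. Since $K$ is $\Z$-slice in $\cpring$, Theorem~\ref{thm:ExistenceIntro} guarantees that $\Bl_K$ is represented by the $1\times 1$ matrix $-\Delta_K$. Consequently the Alexander module identifies with the cyclic ring $R:=\Lambda/\Delta_K$, where $\Lambda=\Z[t^{\pm 1}]$, the Blanchfield form becomes $\Bl_K([p],[q])=p\overline{q}/\Delta_K$, and unpacking~\eqref{eq:GKIntro} shows that $\mathcal{G}_K$ is precisely the set of \emph{unitary units} of $R$, i.e.\ units $u\in R^\times$ satisfying $u\bar u=1$ with respect to the involution $t\mapsto t^{-1}$. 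The cardinality $|\mathcal{D}_\Z(K,\cpring)|$ is therefore the number of orbits of unitary units of $R$ under multiplication by $t$, or equivalently the index of $\langle t\rangle$ in the group of unitary units whenever this is finite.

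For the trivial case $\Delta_K=\Delta_0=-1$, the ring $R$ vanishes, $\mathcal{G}_K$ is a singleton, and $K$ bounds a unique $\Z$-disk, in agreement with Freedman's classification. For a quadratic $\Delta_K=\Delta_n$ with $n\ne 0$, the ring $R$ is a $\Z$-order of rank $2$; its structure, and that of its unit group, is governed by the discriminant-like quantity $4n+1=(2n+1)^2-4n^2$. Specifically, $R$ embeds into the ring of integers of $\Q(\sqrt{4n+1})$ when $4n+1$ is not a perfect square, and into $\Q\times\Q$ when it is, with an explicit conductor determined by $n$; under this embedding, $t\mapsto t^{-1}$ transports to complex conjugation, Galois conjugation, or the factor swap as appropriate, which identifies the unitary subgroup as either the torsion subgroup of $R^\times$ or the kernel of the norm. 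The remaining, and main, step is to compute the index $[\{u\in R^\times : u\bar u=1\}:\langle t\rangle]$ case by case as $n$ varies over $\Z$; this is the content of the number-theoretic Theorem~\ref{thm:NumberTheoryIntro}.

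The main obstacle is precisely this arithmetic analysis. The expected route is to combine Dirichlet's unit theorem for the maximal order of $\Q(\sqrt{4n+1})$ with a lifting-of-units computation down to the non-maximal order $R$ and then impose the norm condition $u\bar u=1$; the sporadic cases $\Delta_{-1},\Delta_1,\Delta_2$ will appear as low-discriminant accidents, while the family $n=-p^k$ will emerge as those $n$ for which the conductor of $R$ is a prime power, forcing the unitary-unit group to be generated (up to torsion) by $t$ or by a square root of $t$. The parity of $k$ should distinguish whether $t$ itself is or is not a square in the unitary units, thereby toggling the index between $2$ and $4$; in all remaining cases the index is infinite. Granting these arithmetic inputs, the case list $1,2,4,\infty$ stated in the theorem follows directly by the bijection of Theorem~\ref{thm:DiscsWithoutCPP22Intro}, with no further topological input required.
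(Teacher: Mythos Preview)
Your reduction is exactly the paper's: invoke Theorem~\ref{thm:DiscsWithoutCPP22Intro} to get $\mathcal{D}_\Z(K,\cpring)\cong\mathcal{G}_K/\{t^k\}$, identify $\mathcal{G}_K$ with the unitary units $U_n$ of $\Z[t^{\pm 1}]/(\Delta_n)$ via the cyclic presentation of $\Bl_K$, and then read off the case list from Theorem~\ref{thm:NumberTheoryIntro}. That is a complete proof of Theorem~\ref{thm:UniquenessIntro} and matches the paper's argument.

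One correction to your third paragraph, which sketches how Theorem~\ref{thm:NumberTheoryIntro} itself is proved: the ring $R=\Z[t^{\pm 1}]/(\Delta_n)$ does \emph{not} embed into the ring of integers of $\Q(\sqrt{4n+1})$. Sending $t\mapsto a/n$ with $a=\tfrac{(2n+1)+\sqrt{4n+1}}{2}$ identifies $R$ with the \emph{localization} $\Z[1/n,\xi]$, where $\xi=\tfrac{1+\sqrt{4n+1}}{2}$; the element $t+t^{-1}-2$ maps to $1/n$, so $R$ genuinely contains non-integers. The paper's route is then to compute $\Z[1/n,\xi]^\times$ by comparing with $\Z[1/n,\omega]^\times$ (where $\Z[\omega]$ is the full ring of integers), factor the ideal $(n)$ in $\Z[\omega]$ via the splitting criterion, and analyze the norm map $N\colon\Z[1/n,\xi]^\times\to\Z[1/n]^\times$. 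The ``conductor'' heuristic you describe is not quite the mechanism; the finite cases $n=-p^k$ arise because $(n)$ has only two prime ideal factors in $\Z[\omega]$, and the parity of $k$ governs whether $\xi/p^{k/2}$ furnishes an extra square root of $-t$ in $U_n$. Since you are citing Theorem~\ref{thm:NumberTheoryIntro} as a black box, none of this affects the validity of your proof of Theorem~\ref{thm:UniquenessIntro}.
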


We emphasize the difference with $\Z$-disks in $D^4$ where $|\mathcal{D}_\Z(K,D^4)| \in \lbrace 0,1\rbrace$: in contrast,  when~$\Delta_K$ is quadratic,  Theorem~\ref{thm:UniquenessIntro} shows that, generically,~$\mathcal{D}_\Z(K,\cpring)$ is infinite.

\begin{remark}
\label{rem:EquivalenceIntro}
Theorems~\ref{thm:DiscsWithoutCPP22Intro} and~\ref{thm:UniquenessIntro} admit analogues where the word ``isotopy" is replaced by ``equivalence".
To form the analogue of Theorem~\ref{thm:DiscsWithoutCPP22Intro}, one substitutes multiplication by $t^k$ for multiplication by $\pm t^k$, whereas for Theorem~\ref{thm:UniquenessIntro}, the outcome is that if $K$ is a knot with trivial or quadratic Alexander polynomial that is $\Z$-slice in $\cpring$, then, up to equivalence rel.\ boundary,
$$
K \text{ bounds }
\begin{cases}
1 \text{ disk} & \quad \text{if $\Delta_K=\Delta_0,\Delta_{-1},\Delta_1,\Delta_2$}, \\
& \quad \text{or $\Delta_K=\Delta_n$ for $n=-p^k$ with $p$ prime and $k$ odd}, \\
2 \text{ disks} &  \quad \text{if $\Delta_K=\Delta_n$ for $n=-p^k$ with $p$ prime and $k$ even}, \\
\infty \text{ disks} & \quad \text{otherwise}.\\
\end{cases}
$$
We refer to Remarks~\ref{rem:NumberTheoryEquivalence} and~\ref{rem:Equivalence} for further details.
\end{remark}

We describe some further context surrounding Theorem~\ref{thm:UniquenessIntro} in Section~\ref{sub:Context} but presently we discuss examples of Theorem~\ref{thm:UniquenessIntro} with an emphasis on realizing the disks smoothly.

As we explain in Section~\ref{sec:CrossingChange}, the construction of the $\Z$-disk $D_\gamma$ associated to a generalized crossing change curve $\gamma$ depends on a choice of orientation for $\gamma$. Reversing the orientation on $\gamma$ results in a disk $D_{-\gamma}$ which is equivalent, but not necessarily isotopic, to the original rel.\ boundary. 
Each generalized crossing change curve thus gives two $\Z$-disks which are equivalent rel.\ boundary, but usually distinct in the sense of Theorem~\ref{thm:UniquenessIntro}. 
For knots with trivial or quadratic Alexander polynomial, we note in Remark~\ref{rem:pmdistinctExampleSection} that $D_{\pm \gamma}$ are isotopic rel.\ boundary if and only if~$\Delta_K = \Delta_0$ or $\Delta_{-1}$.

\begin{example}
\label{ex:Genus1Enumeration}
We illustrate Theorem~\ref{thm:UniquenessIntro} using twist knots and include several examples of explicit smooth disks properly embedded in~$\cpring$.
The point here is that if $K$ can be unknotted by a single positive generalized crossing change, then the associated $\Z$-disk is smoothly embedded.
\begin{itemize}
\item The trefoil~$K_{-1}$ bounds a unique $\Z$-disk in $\cpring$ up to isotopy rel.\  boundary. 
As we discuss in Example~\ref{ex:Trefoil}, this disk can be smoothly realized as a crossing change $\Z$-disk $D_\gamma$.
We explicitly show that in this case, the disks $D_{\pm \gamma}$ are smoothly isotopic rel.\ boundary. 
\item The figure eight knot $K_1$ and the stevedore knot $K_2$ each bound two $\Z$-disks up to isotopy rel.\  boundary and a single disk up to equivalence rel.\  boundary. 
Example~\ref{ex:FigureEight} focuses on the figure eight knot and shows that  the disks are smoothly realized by crossing change~$\Z$-disks~$D_{\pm \gamma}$.
\item The twist knot $K_{-4}$ bounds four $\Z$-disks in $\cpring$ up to isotopy rel.\  boundary and two~$\Z$-disks up to equivalence rel.\  boundary. 
As we discuss in Example~\ref{ex:knsquared}, these disks are smoothly realized by generalized crossing change $\Z$-disks $D_{\pm \gamma_1},D_{\pm \gamma_2}$.
\item More generally, for $n = -k^2$, $k$ a prime power, the twist knot $K_n$ bounds four $\Z$-disks in~$\cpring$ up to isotopy rel.\  boundary and two $\Z$-disks up to equivalence rel.\  boundary. 
In Example~\ref{ex:knsquared}, we realize two isotopy classes smoothly as crossing change $\Z$-disks~$D_{\pm \gamma_1}$ with~$\gamma_1$ an unknotting curve, 
and describe the other two disks as $D_{\pm \gamma_2}$ with $\gamma_2$ a generalized unknotting curve to an Alexander polynomial knot.
We do not know if all four isotopy classes can be realized smoothly. The construction also holds for $k$ not a prime power, but in this case $K_n$ has infinitely many $\Z$-disks, out of which we have only attempted to describe four.
\item Finally, for $n = -k^2$, $k$ a prime power, the $n$-twisted negative Whitehead double of the torus knot $T(-k,k-1)$ bounds four $\Z$-disks in $\cpring$ up to isotopy rel.\  boundary and two $\Z$-disks up to equivalence rel.\  boundary. 
In Example~\ref{ex:toruswhitehead},  we realize all of these smoothly via a pair of distinct generalized unknotting curves (together with their orientation reversals). The construction also holds for $k$ not a prime power, but in this case the knot in question has infinitely many $\Z$-disks, out of which we have only attempted to describe four.
\end{itemize}
This shows that each case of Theorem~\ref{thm:UniquenessIntro} where $K$ bounds a finite number of $\Z$-disks can be illustrated by an example in which all isotopy classes can be realized smoothly.
\end{example}

We further illustrate Theorems~\ref{thm:ExistenceIntro} and~\ref{thm:UniquenessIntro} by calculating the number~$d_K$ of $\Z$-disks in $\cpring$ bounded by a knot $K$.
Thanks to Theorem~\ref{thm:ExistenceIntro},  we know that if a knot or its mirror is $\Z$-slice in $\cpring$,  then its Blanchfield form is presented by a size one nondegenerate Hermitian matrix.
Work of Borodzik-Friedl~\cite[Theorem 1.1]{BorodzikFriedlLinking} therefore implies that if a knot $K$ or its mirror is $\Z$-slice in $\cpring$,  then  $u_a(K)=1$, where $u_a(K)$ denotes the algebraic unknotting number of~$K$.
The algebraic unknotting number is listed on KnotInfo~\cite{Knotinfo}.
\begin{example}
We list the number $d_K$ of $\Z$-disks in $\cpring$ up to isotopy for knots $K$ up to~$5$ crossings.
As mentioned in Example~\ref{ex:Genus1Enumeration}, the unknot and the trefoil satisfy $d_K=1$, whereas the figure eight has $d_K=2$.
Since the cinquefoil $K=5_1$ satisfies $u_a(K)=2$, we have $d_K=0$.
Finally, the knot $K=5_2$ is $\Z$-slice in~$\cpring$ with $\Delta_{K}=-2t+3-2t^{-1}=\Delta_{-2}$ so $d_K=2$.
\end{example}

\subsection{The underlying number theory}
\label{sub:NumberTheoryIntro}

Next we state the number theoretic result that underlies Theorem~\ref{thm:UniquenessIntro}.
First we recast~$\mathcal{G}_K$ in a more algebraic way.
Assume that~$K$ is a knot that is~$\Z$-slice in $\cpring$ so that by Theorem~\ref{thm:ExistenceIntro} its Alexander module is cyclic (i.e. ~$H_1(E_K^\infty)\cong \Z[t^{\pm 1}]/(\Delta_K)$) and $\Bl_K(x,y)=\frac{x\overline{y}}{\Delta_K}$ for every $x,y \in H_1(E_K^\infty)$.
Here $x \mapsto \overline{x}$ denotes the involution $p(t) \mapsto p(t^{-1}).$
This implies that polynomial multiplication induces a free and transitive action of
$$
U\left(\frac{\Z[t^{\pm 1}]}{(\Delta_K)}\right):=
\left\lbrace u \in \frac{\Z[t^{\pm 1}]}{(\Delta_K)}  \  \Big|  \  u\overline{u}=1 \right\rbrace$$
on $\mathcal{G}_K$.
In particular there is a noncanonical bijection
$$ \mathcal{G}_K \xrightarrow{\approx} U\left(\frac{\Z[t^{\pm 1}]}{(\Delta_K)}\right).$$
The general setting is as follows.
Given a ring~$R$ with involution~$x \mapsto \overline{x}$,  the \emph{group of unitary units}~$U(R)$ refers to those~$x \in R$ such that~$x \overline{x}=1$.
For example, when~$R=\Z[t^{\pm 1}]$ with the involution $p(t) \mapsto p(t^{-1})$, all units are unitary and are of the form~$\pm t^{k}$ with~$k \in \Z$.
More generally, if $R=\Z[t^{\pm 1}]/p(t)$ for some $p(t) \in \Z[t^{\pm 1}]$, then the group  $U(R)$ always contains the image of the set~$\lbrace \pm  t^k \rbrace_{k \in \Z} \subset \Z[t^{\pm 1}]$ under the projection map~$\Z[t^{\pm 1}] \to \Z[t^{\pm 1}]/p(t).$

Theorem~\ref{thm:UniquenessIntro} therefore follows from Theorem~\ref{thm:DiscsWithoutCPP22Intro} and the study of 
\color{black}
 $U(\Z[t^{\pm 1}]/p(t))/\lbrace  t^k\rbrace_{k \in \Z}$ for quadratic Laurent polynomials $p(t)$ that arise as Alexander polynomials,
i.e.  for the polynomials~$\Delta_n=nt-(2n+1)+nt^{-1}$.
We write 
$$U_n:=U\left(\frac{\Z[t^{\pm 1}]}{(\Delta_n)}\right)$$
as a shorthand for the set of unitary units of $\Z[t^{\pm 1}]/(\Delta_n)$.
In order to prove Theorem~\ref{thm:UniquenessIntro}, it suffices to determine for what values of $n$ the set $U_n/\lbrace  t^k \rbrace_{k \in \Z}$ is infinite and to determine its cardinality when it is finite. 
The next theorem (which is proved in Section~\ref{sec:UnitaryUnits}) achieves slightly more as it also determines the rank of~$U_n/\lbrace  t^k \rbrace_{k \in \Z}$ in the infinite case in terms of the number $\Omega(n)$ of positive primes dividing $n$.

\begin{theorem}
\label{thm:NumberTheoryIntro}
For every~$n \in \Z$, the group~$U_n/\{t^k\}_{k \in \Z}$ can be described as follows.
\begin{enumerate}
\item The group~$U_n/\{t^k\}_{k \in \Z}$ is finite precisely when~$n = 2, 1, 0, -1$, or~$-p^k$ for a prime~$p$, and 
$$
 U_n/\{t^k\}_{k \in \Z} 
\cong \begin{cases}
\lbrace 1 \rbrace & \quad \text{ if $n=-1,0$},  \\
\Z/2\Z & \quad \text{ if $n=1,2$ or $n=-p^k$ with $k$ odd},  \\
\Z/4\Z & \quad \text{ if $n=-p^k$ with $k$ even}. \\
\end{cases}
$$
\item In all other cases, ~$U_n/\{t^k\}_{k \in \Z}$ is infinite and 
$$
\rk \left( U_n/\{t^k\}_{k \in \Z} \right)
=\begin{cases}
\Omega(n) & \quad \text{ if $\Delta_n$ is irreducible and $n>0$},  \\
\Omega(n)-1 & \quad \text{ if $\Delta_n$ is irreducible and $n<0$ or if $\Delta_n$ is reducible.}
\end{cases}
$$
\end{enumerate}
\end{theorem}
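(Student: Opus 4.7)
For $n \neq 0$, I would view $R_n := \Z[t^{\pm 1}]/(\Delta_n)$ as $\Z[1/n][t]/(nt^2 - (2n+1)t + n)$: a free rank-$2$ $\Z[1/n]$-algebra whose involution sends $t \mapsto (2n+1)/n - t$. Its rationalization $L_n := R_n \otimes \Q$ is either the quadratic field $K_n := \Q(\sqrt{4n+1})$ (when $4n+1$ is not a square, i.e.\ $n \neq k(k+1)$) with Galois conjugation as involution, or $\Q \times \Q$ (when $n = k(k+1)$) with coordinate swap. I would regard $R_n$ as an $S$-order in $L_n$, with $S$ containing the archimedean place(s) and the primes above $n$, use Dirichlet's $S$-unit theorem to compute $\operatorname{rk} R_n^\times$, and take the kernel of the norm to describe $U_n$; then quotient by $\langle t \rangle$. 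The degenerate cases $n = 0$ (trivial ring) and $n = -1$ (where $R_{-1} \cong \Z[\zeta_6]$ and $U_{-1} = \mu_6 = \langle t \rangle$) are immediate.

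The crucial input is a splitting lemma: every prime $p \mid n$ splits in $K_n$, since $4n+1 \equiv 1 \pmod p$ is a square. Dirichlet then gives $\operatorname{rk} R_n^\times = 2\Omega(n)$ in the imaginary quadratic case and $1 + 2\Omega(n)$ in the real quadratic case, while the norm map $N\colon R_n^\times \to \Z[1/n]^\times$ has image of rank $\Omega(n)$ (it contains each $p \mid n$). Hence $U_n = \ker N$ has rank $\Omega(n)$ or $1 + \Omega(n)$. In the reducible case $n = k(k+1)$, a direct analysis identifies $R_n$ with $\{(x,y) \in \Z[1/n]^2 : x \equiv y \pmod{2k+1}\}$ and $U_n$ with $\{x \in \Z[1/n]^\times : x^2 \equiv 1 \pmod{2k+1}\}$, a finite-index subgroup of $\Z[1/n]^\times \cong \Z/2 \times \Z^{\Omega(n)}$, of rank $\Omega(n)$. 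Since $t$ has infinite order whenever $n \neq -1$ (its minimal polynomial is not cyclotomic), quotienting by $\langle t \rangle$ drops the rank by one, matching the theorem.

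For the finite-rank cases one must pin down the torsion of $U_n$ and the precise position of $t$. The cases $n = 1$ (where $R_1 = \Z[\phi]$, $N(\phi) = -1$, so $U_1 = \{\pm 1\} \times \langle \phi^2 \rangle$ and $t = \phi^2$) and $n = 2$ (where $U_2 \cong \Z[1/2]^\times = \{\pm 2^k\}$ with $t \leftrightarrow 2^{-1}$) quickly give $\Z/2$. The main case is $n = -p^k$. First verify that $\mu_{K_n} \cap R_n = \{\pm 1\}$, even when $K_n = \Q(\sqrt{-3})$: writing $\zeta_3$ in the $\Z[1/p]$-basis $\{1, t\}$ introduces denominators coprime to $p$. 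The decisive algebraic identity is
\[
\bigl(p^{k/2}(t - 1)\bigr)^2 = -t \quad \text{in } R_n,
\]
an immediate consequence of $t^2 = ((2n+1)/n)\,t - 1$. For $k$ even, $\epsilon := p^{k/2}(t-1) \in R_n$ is a unitary unit with $\epsilon^2 = -t$; arguing that $\epsilon$ is fundamental ($\mathfrak p$-adic valuation $k/2$, the minimum in $R_n$) yields $U_n = \{\pm 1\} \times \langle \epsilon \rangle$ with $t \leftrightarrow (\bar 1, 2)$, and the quotient is cyclic of order $4$. For $k$ odd, solving $(a+bt)^2 = \pm t$ in $R_n$ reduces to $b^2 = \pm p^k$, which has no $\Z[1/p]$-solution; hence no square root of $\pm t$ lies in $R_n$, and one argues that $t$ itself is fundamental, giving $U_n = \{\pm t^j\}$ and quotient $\Z/2$.

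The principal obstacle is the last claim for odd $k \geq 3$: ruling out $m$-th roots of $\pm t$ in $R_n$ for all $m > 1$. A clean route is through the class group of $K_n$: a would-be root with $\mathfrak p$-adic valuation $k/m$ requires $\mathfrak p^{k/m}\bar{\mathfrak p}^{-k/m}$ to be principal, constraining the order $h_0$ of $[\mathfrak p] \in \operatorname{Cl}(K_n)$; combined with the conductor of $R_n$ in $\mathcal{O}_{K_n}[1/p]$, this forces the index $[U'_n : U_n]$ to absorb any such would-be extra unit. Together with the square-root obstruction above, this pins down $U_n = \{\pm t^j\}$ in the odd case and completes the $n = -p^k$ analysis.
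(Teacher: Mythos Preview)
Your overall strategy matches the paper's closely: identify $\Lambda_n$ with an order in a localized quadratic ring (or in $\Z[1/n]\times\Z[1/n]$ in the reducible case), observe that every prime dividing $n$ splits, apply the $S$-unit theorem to get $\operatorname{rk}\Lambda_n^\times$, compute the rank of the norm image, and subtract; then note $t$ has infinite order unless $n=-1$. The reducible case and the small cases $n\in\{-1,0,1,2\}$ are handled essentially as in the paper.

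The genuine gap, which you yourself flag, is the case $n=-p^k$: showing that $t$ (for $k$ odd) or your $\epsilon=p^{k/2}(t-1)$ (for $k$ even) actually generates the free part of $U_n$. Your identity $\epsilon^2=-t$ is correct and your square-root obstruction for odd $k$ is valid, but neither rules out higher roots; the class-group/conductor route you sketch is not carried out, and making it precise requires controlling simultaneously the order of $[\mathfrak p]$ in $\operatorname{Cl}(K_n)$ and the unit-group index coming from the conductor $c$ (where $4n+1=c^2d$), which is delicate.

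The paper sidesteps this by computing $\Z[1/n,\xi]^\times$ exactly. It first determines the supergroup $\Z[1/n,\omega]^\times$ (with $\omega$ generating the full ring of integers) as $\Z[\omega]^\times\times\langle p,y,\bar y\mid p^s=y\bar y\rangle$, where $(y)=\mathfrak p_1^s$ is the least principal power. It then proves $\Z[1/n,\xi]^\times=\{\pm1\}\times\langle p,\xi,\bar\xi\mid \xi\bar\xi=p^k\rangle$ via a Diophantine claim: any $z\in\Z[\omega]$ with $N(z)=p^i$ for $0\le i<k$ and $p^\ell z\in\Z[\xi]$ for some $\ell$ must equal $\pm p^{i/2}$. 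The proof writes $z=a+b\omega$, uses the conductor condition to force $c\mid b$, and then observes that the discriminant $b^2d+4p^i=g^2(-4p^k+1)+4p^i$ of the norm equation is negative unless $b=0$. This pins down the unit group completely, whence $U_n=\{\pm1\}\times\langle\xi/\bar\xi\rangle$ with $t=-\xi/\bar\xi$, and for even $k$ the extra generator $\xi/p^{k/2}$ (which is exactly your $-\epsilon$ under the identification) satisfies $(\xi/p^{k/2})^2=\xi/\bar\xi$. This Diophantine step is what your argument is missing.
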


We use the convention that strictly negative rank should be interpreted as zero rank; we write this for simplicity of the theorem statement. 
The cases in which~$(2)$ give a rank less than or equal to zero correspond precisely to the cases in~$(1)$; see Remark~\ref{rem:Reducibility}.

\begin{remark}
\label{rem:NumberTheoryEquivalence}
As explained in Remark~\ref{rem:pmextension}, Theorem~\ref{thm:NumberTheoryIntro} can be modified to obtain the following description of $U_n/\{ \pm t^k\}_{k \in \Z}$.
\begin{enumerate}
\item The group~$U_n/\{\pm t^k\}_{k \in \Z}$ is finite precisely when~$n = 2, 1, 0, -1$, or~$-p^k$ for a prime~$p$, and
$$
U_n/\{\pm t^k\}_{k \in \Z} 
\cong\begin{cases}
\lbrace 1 \rbrace & \quad \text{ if $n=-1,0,1,2$ or $n=-p^k$ with $k$ odd},  \\
\Z/2\Z & \quad \text{ if $n=-p^k$ with $k$ even}. \\
\end{cases}
$$
\item In all other cases, ~$U_n/\{ \pm t^k\}_{k \in \Z}$ is infinite and 
$$
\rk \left( U_n/\{ \pm t^k\}_{k \in \Z} \right)
=\begin{cases}
\Omega(n) & \quad \text{ if $\Delta_n$ is irreducible and $n>0$},  \\
\Omega(n)-1 & \quad \text{ if $\Delta_n$ is irreducible and $n<0$ or if $\Delta_n$ is reducible.}
\end{cases}
$$
\end{enumerate}
As noted in Remark~\ref{rem:EquivalenceIntro},  for $K$ a knot with trivial or quadratic Alexander polynomial,  this result leads to the classification of $\Z$-disks in $\cpring$  with boundary $K$ up to equivalence rel.\ boundary (instead of isotopy rel.\ boundary).
\end{remark}

\subsection{Challenges in the smooth category} 

We now briefly discuss the results of Theorem~\ref{thm:UniquenessIntro} in the context of the smooth category. 
The most immediate question in this regard is to determine which of the isotopy classes in Theorem~\ref{thm:UniquenessIntro} are realized by smooth disks.
As we have seen in Example~\ref{ex:Genus1Enumeration}, there are knots illustrating the first through third cases of Theorem~\ref{thm:UniquenessIntro} for which all isotopy classes are realized smoothly. 
 
 \begin{question}
 Is there a genus one knot with an infinite number of $\Z$-disks in $\cpring$ such that each class is realized smoothly?
 \end{question}
 
\begin{question}
Is there a genus one knot such that only a strict, non-empty subset of the isotopy classes from Theorem~\ref{thm:UniquenessIntro} have a smooth representative?
 \end{question}


Recall that two smoothly embedded disks in a $4$-manifold with boundary~$S^3$ are \emph{exotic} if they are topologically but not smoothly isotopic rel.\ boundary. 
Exotic disks are known to exist in~$D^4$~\cite{AkbulutZeeman,Hayden} (see also~\cite{HaydenSundberg,DaiMallickStoffregen}) as well as in many $2$-handlebodies with~$S^3$ boundary,  including~$\cpring$~\cite[Theorem 1.13]{ConwayPiccirilloPowell}.
On the other hand,  given a knot~$K$ that is smoothly slice in a $4$-manifold~$N$, it remains challenging to decide whether or not $K$ bounds any exotic slice disks in $N$.
Theorem~\ref{thm:DiscsWithoutCPP22Intro} implies that two smooth crossing change disks $D_{\gamma_1}$ and $D_{\gamma_2}$ with boundary a knot~$K$ are exotic if they are smoothly distinct and the classes~$[\widetilde{\gamma}_1]$ and~$[\widetilde{\gamma}_2]$ agree in~$\mathcal{G}_K/\lbrace t^k \rbrace_{k \in \Z}$. 

One broad approach towards generating pairs of exotic disks would thus be to choose a knot~$K$ falling into the first three cases of Theorem~\ref{thm:UniquenessIntro} and find several distinct unknotting curves that turn $K$ into a smoothly $\Z$-slice knot. If sufficiently many curves are found, then certainly~$[\widetilde{\gamma}_1]=[\widetilde{\gamma}_2] \in \mathcal{G}_K/\lbrace t^k \rbrace_{k \in \Z}$ for some pair $(\gamma_1, \gamma_2)$. 
As we describe in Section~\ref{sec:exotic}, one can then attempt to leverage smooth invariants such as knot Floer homology to distinguish the corresponding pair of disks (see for example \cite{DaiMallickStoffregen}).

For instance, the trefoil bounds a unique crossing change $\Z$-disk in $\cpring$ up to topological isotopy rel.\ boundary. Thus, any pair of smooth $\Z$-disks for the trefoil which can be distinguished in the smooth category would constitute an exotic pair. Unfortunately, the authors have not been able to find any such candidates for exotic pairs of disks, although a systematic investigation of this strategy is beyond the scope of this paper. See Section~\ref{sec:exotic} for further discussion. 

 \begin{question}
Does the right-handed trefoil bound an exotic pair of $\Z$-disks in $\cpring$?
 \end{question}

\subsection{Further context and related work}
\label{sub:Context}

Let $N$ be a  simply-connected $4$-manifold with boundary $S^3$, and let~$K \subset S^3$ be a knot.
$\Z$-surfaces with boundary $K$ considered up to equivalence rel.\  boundary were classified in~\cite{ConwayPowell,ConwayPiccirilloPowell}.
Complete invariants are given by the equivariant intersection form and automorphism invariant of their exterior.
Prior work on the existence question includes~\cite{FellerLewarkOnClassical,FellerLewarkBalanced} (for $\Z$-sliceness in $N=D^4 \#_m S^2 \times S^2 \#_n \C P^2 \# \overline{\C P}^2)$ and~\cite{KjuchukovaMillerRaySakalli} (for $\Z$-sliceness in~$N=(\#_\ell \C P^2)^\circ$).
Concerning this latter result, to the best of our knowledge~\cite[Theorem~1.3]{KjuchukovaMillerRaySakalli} does not recover Theorem~\ref{thm:ExistenceIntro}.
We emphasize again that Theorem~\ref{thm:ExistenceIntro} follows fairly promptly from~\cite{BorodzikFriedlLinking}.
We also note that since $\Z$-disks are nullhomologous (see e.g.~\cite[Lemma~5.1]{ConwayPowell}), Theorem~\ref{thm:ExistenceIntro} also yields criteria for a knot to be $H$-slice, a topic that has attracted some attention recently.

We now focus on $\Z$-disks.
In what follows, the exterior of a disk $D \subset N$ is denoted $N_D$.
Given a Hermitian form~$\lambda$ over $\Z[t^{\pm 1}]$, 
we write~$\mathcal{D}_\lambda(K,N)^{\operatorname{equiv}} $ for the set of rel.\ boundary equivalence classes of $\Z$-disks in $N$ with boundary~$K$ and equivariant intersection form~$\lambda_{N_D}\cong \lambda$; we refer to Section~\ref{sub:Equivariant} for a brief review of equivariant intersection forms.
The automorphism invariant from~\cite{ConwayPiccirilloPowell} gives rise to a (noncanonical) bijection
$$b \colon \mathcal{D}_\lambda(K,N)^{\operatorname{equiv}} \xrightarrow{\approx} \Aut(\Bl_K)/\Aut(\lambda). $$
The target of $b$ is the orbit set of the left action of the automorphism group $\Aut(\lambda)$~on the automorphism group~$\Aut(\Bl_K)$ by~$F \cdot h=h \circ\partial F^{-1}$; we refer to the introductions of~\cite{ConwayPowell,ConwayPiccirilloPowell} for more details. 

A statement involving isotopy classes of disks instead of equivalence classes of disks has not yet appeared in the literature
but can be obtained by tracing through the proofs of~\cite{ConwayPiccirilloPowell} and applying~\cite[Theorem C]{OrsonPowell} to upgrade equivalences to isotopies.

We now restrict to $N=\cpring$.
We observe in Proposition~\ref{prop:barCP2} that the exterior of a~$\Z$-disk  $D \subset \cpring$ with boundary $K$ necessarily has equivariant intersection form $\lambda_{N_D}(x,y)=x\Delta_K\overline{y}$.
This implies that~$\Aut(\Bl_K)/\Aut(\lambda) \approx U(\Z[t^{\pm 1}]/(\Delta_K))/\lbrace \pm t^k \rbrace_{k \in \Z}$ and so $b$ induces a bijection
$$b \colon \mathcal{D}(K,\cpring)^{\operatorname{equiv}} \xrightarrow{\approx}  \Aut(\Bl_K)/\Aut(\lambda) \xrightarrow{\approx} U(\Z[t^{\pm 1}]/(\Delta_K))/\lbrace \pm t^k \rbrace_{k \in \Z}. $$
Although this has not appeared in print,  if one traces through~\cite{ConwayPiccirilloPowell} and applies~\cite[Theorem~C]{OrsonPowell} when necessary, one can verify that $b$ induces a bijection 
\begin{equation}
\label{eq:CPPBijection}
b \colon \mathcal{D}(K,\cpring) \xrightarrow{\approx} U(\Z[t^{\pm 1}]/(\Delta_K))/\lbrace  t^k \rbrace_{k \in \Z}. 
\end{equation}
In practice,  the nature of the automorphism invariant $b_D$ can make it difficult to distinguish concrete $\Z$-disks in $\cpring$ with boundary $K$.

Thus the main novelty of Theorem~\ref{thm:DiscsWithoutCPP22Intro} resides less in the existence of a bijection as in~\eqref{eq:CPPBijection} than in the explicit nature of the invariant
as well as in the fact 
that every $\Z$-disk in $\cpring$ is isotopic to a crossing change~$\Z$-disk.
Another feature of Theorem~\ref{thm:DiscsWithoutCPP22Intro} is that while the bijection in~\eqref{eq:CPPBijection} can be derived by modifying the lengthy argument of~\cite[Section 5]{ConwayPiccirilloPowell}, our argument is more self-contained and instead relies on~\cite{BorodzikFriedlLinking}.
In summary, our main insight is that for generalized crossing change~$\Z$-disks,  the automorphism invariant boils down to the homology class of the lift of the surgery curve in the Alexander module.

\medbreak

Finally we describe some additional context surrounding Theorem~\ref{thm:UniquenessIntro}.
In~\cite{BoyerUniqueness,StongRealization,BoyerRealization,CrowleySixt,CCPSLong,ConwayPiccirilloPowell,ConwayCrowleyPowell}, isometries of linking forms have been used to study surfaces in $4$-manifolds as well as the (stable) classification of $4k$-manifolds.
A subset of the applications in these papers rely on being able to find examples of Hermitian forms~$(H,\lambda)$ such that the automorphism set~$\Aut(\partial \lambda)/\Aut(\lambda)$ is large; here $\partial \lambda$ denotes the boundary linking form of~$\lambda$, see e.g.~\cite[Section 2]{ConwayPowell}.

In~\cite{ConwayPiccirilloPowell},  examples were given of $3$-manifolds $Y$ that have arbitrarily large $\Aut(\Bl_Y)/\Aut(\lambda)$; this was later improved to examples with infinite~$\Aut(\Bl_Y)/\Aut(\lambda)$~\cite{ConwayCrowleyPowell}; we refer to~\cite{CCPSLong} for applications of this sort of algebra to the stable classification of $4k$-manifolds with $k>1$.
Prior to this article however,  few examples of knots $K$ had been produced for which~$\Aut(\Bl_K)/\Aut(\lambda)$ is nontrivial (i.e. for which the equivariant intersection form does not determine the isotopy type of the disk) and no example for which it is infinite.
Surprisingly, Theorem~\ref{thm:UniquenessIntro} suggests that the orbit set~$\Aut(\Bl_K)/\Aut(\lambda)$ might be in fact, generically, be infinite.

\subsection*{Organization}

Section~\ref{sec:Background} collects some background homological material.
In Section~\ref{sec:Existence} we prove Theorem~\ref{thm:ExistenceIntro} which concerns criteria for $\Z$-sliceness in~$\cpring$.
In Section~\ref{sec:CrossingChange} we study the properties of generalized crossing change $\Z$-disks.
In Section~\ref{sec:ClassifyCrossingChange} we prove the classification stated in Theorem~\ref{thm:DiscsWithoutCPP22Intro}.
Section~\ref{sec:Examples} is concerned with many examples and the smooth realization of $\Z$-disks.
In Section~\ref{sec:NumberTheory},  we review number theoretic background.
In Section~\ref{sec:UnitaryUnits},  we prove Theorem~\ref{thm:MainNumberTheory} which concerns the unitary units of~$\Z[t^{\pm 1}]/(\Delta_n)$.

\subsection*{Acknowledgments}

AC was partially supported by the NSF grant DMS\unaryminus 2303674. ID was partially supported by the NSF grant DMS\unaryminus 2303823. MM was partially supported by a Stanford Science Fellowship and a Clay Research Fellowship. The authors would like to thank Levent Alp\"oge for his significant help regarding the number-theoretic aspects of this paper.

\subsection*{Conventions}
 
We work in the topological category with locally flat embeddings unless otherwise stated.
From now on, all manifolds are assumed to be compact, connected, based and oriented; if a manifold has a nonempty, connected boundary, then the basepoint is assumed to be in the boundary.
The boundary of a manifold is oriented according to the ``outwards normal vector first" rule.
We write $\Delta_K$ for the unique symmetric representative of the Alexander polynomial of a knot~$K$ that evaluates to $-1$ at $t=1$.

\section{Background material}
\label{sec:Background}

We fix some notation concerning the (twisted) homology of infinite cyclic covers (Section~\ref{sub:Twisted}),  review the definition of the Blanchfield form (Section~\ref{sub:Blanchfield}) and of the equivariant intersection form (Section~\ref{sub:Equivariant}).
In the interest of expediency, we give the homological definitions of these pairings, referring to~\cite[Sections 2.1 and 2.2]{ConwayPiccirilloPowell} and to the references within for further details and geometric intuition.
We also refer to~\cite[Section 2]{FriedlPowell} for a treatment of twisted homology and the Blanchfield form that is similar to ours but contains further details.

\subsection{Twisted homology}
\label{sub:Twisted}

In what follows,  spaces are assumed to have the homotopy type of a finite CW complex.
Given a space~$X$ together with an epimorphism~$\varphi \colon \pi_1(X) \twoheadrightarrow \Z$, we write~$p\colon X^\infty \to X$ for the infinite cyclic cover corresponding to~$\ker(\varphi)$.
If~$A \subset X$ is a subspace, then we set~$A^\infty :=p^{-1}(A)$ and often write~$H_*(X,A;\Z[t^{\pm 1}])$ instead of~$H_*(X^\infty,A^\infty)$.
Note that these are finitely generated $\Z[t^{\pm 1}]$-modules because $X$ and $A$ have the homotopy type of finite CW-complexes and $\Z[t^{\pm 1}]$ is Noetherian, see e.g.~\cite[Proposition A.9]{FriedlNagelOrsonPowell}.

\begin{remark}
\label{rem:AlexanderPolynomial}
The \emph{Alexander polynomial} of~$X$ is the order of the \emph{Alexander module}~$H_1(X;\Z[t^{\pm 1}])$.
Note that~$\Delta_X$ is a Laurent polynomial that is well defined up to multiplication by~$\pm t^k$ with~$k \in \Z$.
If~$K$ is a knot with exterior~$E_K$ and~$0$-framed surgery~$M_K$,  and we take~$\varphi$ to be the abelianization homomorphism, then a Mayer-Vietoris calculation shows that the inclusion~$E_K \subset M_K$ induces a~$\Z[t^{\pm 1}]$-isomorphism~$H_1(E_K;\Z[t^{\pm 1}]) \to H_1(M_K;\Z[t^{\pm 1}])$.
The Alexander polynomial of~$K$, denoted~$\Delta_K$, can therefore equivalently be defined as the order of~$E_K$ or as the order of~$M_K$.
\end{remark}

We write $H^*(X,A;\Z[t^{\pm 1}])$ for the homology of $\Hom_{\Z[t^{\pm 1}]}(\overline{C_*(X^\infty,A^\infty)},\Z[t^{\pm 1}])$ and remind the reader that~$H^*(X,A;\Z[t^{\pm 1}])$ is isomorphic to the cohomology with compact support of the pair~$(X^\infty,A^\infty)$,  not to $H^*(X^\infty,A^\infty).$
Here,  given a $\Z[t^{\pm 1}]$-module $V$,  we write $\overline{V}$ for the $\Z[t^{\pm 1}]$-module whose underlying group agrees with that of $V$ but with the $\Z[t^{\pm 1}]$-module structure induced by $t \cdot v=t^{-1}v$ for $v \in V$.

More generally,
if $M$ is a $\Z[t^{\pm 1}]$-module, then we write $H_*(X,A;M)$ and $H^*(X,A;M)$ for the homology of the $\Z[t^{\pm 1}]$-chain complexes $M \otimes_{\Z[t^{\pm 1}]} C_*(X^\infty,A^\infty)$ and $\Hom_{\Z[t^{\pm 1}]}(\overline{C_*(X^\infty,A^\infty)},M)$, respectively.
Apart from the case~$M=\Z[t^{\pm 1 }]$ which we have already considered, we will occasionally consider the case where $M=\Q(t)$ is the field of fractions of $\Z[t^{\pm 1}]$ as well as~$M=\Q(t)/\Z[t^{\pm 1}].$ 

For the $\Z[t^{\pm 1}]$-modules $M=\Z[t^{\pm 1}],\Q(t)$ and $\Q(t)/\Z[t^{\pm 1}]$, there is an evaluation homomorphism
$$\ev \colon H^*(X,A;M) \to \overline{\Hom_{\Z[t^{\pm 1}]}(H_*(X,A;\Z[t^{\pm 1}]),M)}.$$
A more thorough discussion of this evaluation map can be found for example in~\cite[Section~2.3]{FriedlPowell}.

When $X$ is an $n$-manifold, there are Poincar\'e duality isomorphisms
$$\PD \colon H^{n-*}(X;M) \xrightarrow{\cong} H_*(X,\partial X;M) \ \ \  \text{ and}  \ \ \  \PD \colon H^{n-*}(X,\partial X;M) \xrightarrow{\cong} H_*(X;M).$$

\subsection{The Blanchfield form}
\label{sub:Blanchfield}

We very briefly review the homological definition of the Blanchfield form.
Given a $3$-manifold $Y$ and an epimorphism $\varphi \colon \pi_1(Y) \twoheadrightarrow \Z$ such that the Alexander module~$H_1(Y;\Z[t^{\pm 1}])$ is torsion, the \emph{Blanchfield form} is the pairing
\begin{align*}
\Bl_Y \colon H_1(Y;\Z[t^{\pm 1}]) \times H_1(Y;\Z[t^{\pm 1}]) &\to \Q(t)/\Z[t^{\pm 1}] \\
(x,y) &\mapsto \ev( \operatorname{BS}^{-1} \circ \PD^{-1}(i_*(y)))(x).
\end{align*}
Here $i_* \colon H_1(Y;\Z[t^{\pm 1}]) \to H_1(Y,\partial Y;\Z[t^{\pm 1}])$ is the homomorphism induced by the inclusion,  $\PD^{-1} \colon H_1(Y,\partial Y;\Z[t^{\pm 1}]) \to H^2(Y;\Z[t^{\pm 1}])$ denotes the inverse of the Poincar\'e duality isomorphism,~$\BS^{-1}$ denotes the inverse of the Bockstein map~$\BS \colon H^2(Y;\Z[t^{\pm 1}]) \to H^1(Y;\Q(t)/\Z[t^{\pm 1}])$ associated to the short exact sequence $0 \to \Z[t^{\pm 1}] \to \Q(t) \to \Q(t)/\Z[t^{\pm 1}] \to 0$ of coefficients (here the homomorphism~$\BS$ is an isomorphism because we assumed that $H_1(Y;\Z[t^{\pm 1}])$ is torsion), and $\ev \colon H^1(Y;\Q(t)/\Z[t^{\pm 1}]) \to \overline{\Hom(H_1(Y;\Z[t^{\pm 1}]),\Q(t)/\Z[t^{\pm 1}])}$ is the evaluation homomorphism.
The Blanchfield form is sesquilinear
and Hermitian; see e.g.~\cite{PowellBlanchfield}.
If $Y$ is closed, then $\Bl_Y$ is nonsingular.

\begin{remark}
\label{rem:AlexanderModule}
Given a knot $K$, the inclusion $E_K \subset M_K$ induces an isometry 
$$(H_1(E_K^\infty),\Bl_{E_K}) \xrightarrow{\cong} (H_1(M_K^\infty),\Bl_{M_K}).$$
In what follows, we will refer to either pairing as \emph{the Blanchfield form} of $K$ and write~$\Bl_K$. 
\end{remark}

\subsection{The equivariant intersection form}
\label{sub:Equivariant}

We briefly review the homological definition of the equivariant intersection form.
Given a $4$-manifold $X$ with (possibly empty) boundary, and an epimorphism $\varphi \colon \pi_1(X) \twoheadrightarrow \Z$, the \emph{equivariant intersection form} is the pairing
\begin{align*}
\lambda_X \colon H_2(X;\Z[t^{\pm 1}]) \times H_2(X;\Z[t^{\pm 1}]) &\to \Z[t^{\pm 1}] \\
(x,y) &\mapsto \ev(\PD^{-1} \circ i_*(y))(x),
\end{align*}
where $i_*\colon H_2(X;\Z[t^{\pm 1}]) \to  H_2(X,\partial X;\Z[t^{\pm 1}])$ is the homomorphism induced by the inclusion, $\PD^{-1} \colon H_2(X,\partial X;\Z[t^{\pm 1}]) \to H^2(X;\Z[t^{\pm 1}])$ is the inverse of the Poincar\'e duality isomorphism and~$\ev \colon H^2(X;\Z[t^{\pm 1}]) \to \overline{\Hom(H_2(X;\Z[t^{\pm 1}]),\Z[t^{\pm 1}])}$ is the evaluation homomorphism.
The equivariant intersection form is sesquilinear and Hermitian.

We will occasionally also consider the relative pairing
\begin{align*}
\lambda_X^\partial \colon H_2(X;\Z[t^{\pm 1}]) \times H_2(X,\partial X;\Z[t^{\pm 1}]) &\to \Z[t^{\pm 1}] \\
(x,y) &\mapsto \ev(\PD^{-1}(y))(x).
\end{align*}
Finally, we record the relation between the equivariant intersection form and the Blanchfield form.
This is a well known statement, see e.g.~\cite[Remark 2.4 and Proposition~3.5]{ConwayPowell} for a reference involving conventions that match ours,  and so we omit the proof.
\begin{proposition}
\label{prop:presents}
Let~$W$ be a~$4$-manifold with~$\pi_1(W) \cong \Z$, connected boundary,  $\pi_1(\partial W ) \to \pi_1(W)$ surjective and~$H_1(\partial W^\infty)$ a $\Z[t^{\pm 1}]$-torsion module.
Any matrix $A$ representing $\lambda_W$ presents the Blanchfield form $\Bl_{\partial W}$, meaning that $\Bl_{\partial W}$ is isometric to the linking form
\begin{align*}
\Z[t^{\pm 1}]^n/A^T\Z[t^{\pm 1}]^n \times \Z[t^{\pm 1}]^n/A^T\Z[t^{\pm 1}]^n  \to \Q(t)/\Z[t^{\pm 1}] \\
([x],[y]) \mapsto -x^TA^{-1}\overline{y},\nonumber
\end{align*}
where $n$ denotes the size of $A$, and the inverse is taken over $\Q(t).$
\end{proposition}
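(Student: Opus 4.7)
The plan is to chase through the definitions using the long exact sequence of the pair $(W,\partial W)$ with $\Z[t^{\pm 1}]$ coefficients together with Poincar\'e--Lefschetz duality, and to reduce the Blanchfield form on $H_1(\partial W;\Z[t^{\pm 1}])$ to an explicit cokernel computation.

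The first step is to extract from the hypotheses enough structural information about the homology of $W$. Since $\pi_1(W)\cong\Z$ we have $H_1(W;\Z[t^{\pm 1}])=0$, and since $\pi_1(\partial W)\to\pi_1(W)$ is surjective the inclusion induces the zero map on $H_0$ with twisted coefficients. From the long exact sequence of $(W,\partial W)$ together with Poincar\'e--Lefschetz duality and UCT, I would deduce that $H_2(W;\Z[t^{\pm 1}])$ is free of some rank $n$, that $H_3(W,\partial W;\Z[t^{\pm 1}])=0$, and that the connecting homomorphism fits into a short exact sequence
\[
0 \to H_2(W;\Z[t^{\pm 1}]) \xrightarrow{i_*} H_2(W,\partial W;\Z[t^{\pm 1}]) \to H_1(\partial W;\Z[t^{\pm 1}]) \to 0.
\]
Under the identification $H_2(W,\partial W;\Z[t^{\pm 1}])\cong\overline{\Hom(H_2(W;\Z[t^{\pm 1}]),\Z[t^{\pm 1}])}$ coming from $\PD^{-1}$ followed by $\ev$ (which is an isomorphism because $H_2(W;\Z[t^{\pm 1}])$ is free and $H_1(W;\Z[t^{\pm 1}])=0$), the map $i_*$ is exactly the adjoint of $\lambda_W$, so in a basis it is represented by $A^T$. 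This identifies $H_1(\partial W;\Z[t^{\pm 1}])\cong\coker(A^T\colon\Z[t^{\pm 1}]^n\to\Z[t^{\pm 1}]^n)$.

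The second step is to compute the Blanchfield form under this identification. Given $[y]\in\coker(A^T)$ represented by $y\in\Z[t^{\pm 1}]^n$, I would lift the corresponding class in $H_1(\partial W;\Z[t^{\pm 1}])$ to a class in $H_2(W,\partial W;\Z[t^{\pm 1}])$, namely the functional $\langle y,\cdot\rangle$. Applying $\PD$ and then $\BS^{-1}$ on $\partial W$ should be replaced, via naturality, by the connecting map in the long exact sequence of $(W,\partial W)$ with $\Q(t)/\Z[t^{\pm 1}]$ coefficients; since $H^*(W;\Q(t))$ vanishes in the relevant degrees (by Poincar\'e duality against the torsion hypothesis on $\partial W$), the Bockstein on $\partial W$ factors through the coboundary of the pair. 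Concretely, the class $A^{-1}y$ in $\Q(t)^n$, which represents a preimage of $y$ under $A^T$ over $\Q(t)$, will give a well-defined element of $H^1(\partial W;\Q(t)/\Z[t^{\pm 1}])$. Pairing with $x$ via $\ev$ then yields the formula $-x^TA^{-1}\overline{y}$, with the sign (and the bar) coming from the standard convention for the evaluation map on the conjugated cochain complex.

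The main obstacle is the bookkeeping in the second step: carefully matching the connecting homomorphism of the coefficient sequence $0\to\Z[t^{\pm 1}]\to\Q(t)\to\Q(t)/\Z[t^{\pm 1}]\to 0$ with the connecting homomorphism of the pair $(W,\partial W)$, and tracking how the conjugation implicit in $\ev$ interacts with $\lambda_W$ being Hermitian. Once the commutative diagram relating these two connecting maps is set up, the formula $\Bl_{\partial W}([x],[y])=-x^TA^{-1}\overline{y}$ falls out of a direct cochain-level computation, completing the proof.
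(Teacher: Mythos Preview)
The paper does not actually prove this proposition: it states that the result is well known, cites \cite[Remark~2.4 and Proposition~3.5]{ConwayPowell} for a reference with matching conventions, and omits the proof entirely. Your sketch is precisely the standard argument one finds in those references: identify $H_1(\partial W;\Z[t^{\pm 1}])$ with $\coker(A^T)$ via the long exact sequence of the pair and Poincar\'e duality, then unwind the definition of $\Bl_{\partial W}$ through the Bockstein to recover the formula $-x^TA^{-1}\overline{y}$.

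One small slip worth flagging: surjectivity of $\pi_1(\partial W)\to\pi_1(W)$ does not make the inclusion-induced map on $H_0$ with $\Z[t^{\pm 1}]$ coefficients zero; rather it makes it an isomorphism (both sides are $\Z$ with trivial $t$-action, and the cover of $\partial W$ is connected). What you actually need from this is that $H_1(W,\partial W;\Z[t^{\pm 1}])=0$, which follows from the long exact sequence since $H_1(W;\Z[t^{\pm 1}])=0$ and the $H_0$ map is injective. Also, the freeness of $H_2(W;\Z[t^{\pm 1}])$ is not quite immediate from the hypotheses as stated and typically requires a universal coefficient spectral sequence argument (as in \cite[Lemma~3.2]{ConwayPowell}); you gesture at this with ``UCT'' but the details are where the work lies. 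Otherwise your outline is correct and matches the literature the paper defers to.
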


It is worth noting that the sign in this proposition depends on various conventions that might differ depending on the article.
For example no such sign appears in~\cite[Section 1.3 and Theorem~2.6]{BorodzikFriedlClassical1}, whereas~\cite[Lemma 10.2]{ChaOrrPowell} contains the sign but has all instances of $A$ replaced by $A^T$.

\section{Existence of $\Z$-disks in $\cpring$}
\label{sec:Existence}

The goal of this section is to prove Theorem~\ref{thm:ExistenceIntro} from the introduction which lists criteria for a knot $K$ that are equivalent to it being $\Z$-slice in $\cpring$.
\medbreak

Let $N$ be a simply-connected~$4$-manifold with boundary~$S^3$ and let $D \subset N$ be a $\Z$-disk with boundary a knot~$K \subset S^3$.
We use $N_D:=N \setminus \nu (D)$ to denote the exterior of $D$ and recall that~$\partial N_D=M_K$ is the result of~$0$-framed surgery along~$K$.
The next lemma lists some algebro-topological  properties of $\Z$-disk exteriors.

\begin{lemma}
\label{lem:DiscsGeneral}
Let $N$ be a simply-connected~$4$-manifold with boundary~$S^3$.
Given a~$\Z$-disk~$D \subset N$ with boundary~$K$,  the following assertions hold:
\begin{enumerate}
\item the~$\Z[t^{\pm 1}]$-module~$H_2(N_D;\Z[t^{\pm 1}])$ is free of rank~$b_2(N)$;
\item  if~$A(t)$ is a matrix representing~$\lambda_{N_D}$, then~$A(1)$ represents the intersection form of~$N$;
\item if~$A(t)$ is a matrix representing~$\lambda_{N_D}$, then the transpose $A^T(t)$ presents the Alexander module $H_1(M_K;\Z[t^{\pm 1}])$.
\end{enumerate}
\end{lemma}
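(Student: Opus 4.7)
The plan is to compute $H_\ast(N_D;\Z[t^{\pm 1}])$ in all degrees, deduce (1), specialize at $t=1$ for (2), and invoke Proposition~\ref{prop:presents} for (3). Since $\pi_1(N_D)\cong\Z$, the infinite cyclic cover $N_D^\infty$ is the universal cover, so $H_0(N_D;\Z[t^{\pm 1}])\cong\Z$ (with trivial action) and $H_1(N_D;\Z[t^{\pm 1}])=0$; moreover $H_4(N_D;\Z[t^{\pm 1}])=0$ since $N_D^\infty$ is a noncompact oriented $4$-manifold. Poincar\'e--Lefschetz duality gives $H_3(N_D;\Z[t^{\pm 1}])\cong H^1(N_D,\partial N_D;\Z[t^{\pm 1}])$, and a short chase of the long exact sequence of $(N_D,\partial N_D)$, using that $H_0(\partial N_D;\Z[t^{\pm 1}])\to H_0(N_D;\Z[t^{\pm 1}])$ is the identity on $\Z$ and $H_1(N_D;\Z[t^{\pm 1}])=0$, shows $H_i(N_D,\partial N_D;\Z[t^{\pm 1}])=0$ for $i=0,1$; via the universal coefficient spectral sequence this forces $H_3(N_D;\Z[t^{\pm 1}])=0$.

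For the rank and freeness of $H_2$ in~(1), I would first compute $\chi(N_D)=b_2(N)$ from the decomposition $N=N_D\cup\nu(D)$ with $\nu(D)\cong D^4$ and $N_D\cap\nu(D)\simeq S^1$, and use that the equivariant Euler characteristic equals the alternating sum of $\Q(t)$-dimensions of the $H_i(N_D;\Z[t^{\pm 1}])$; the vanishing computed above forces $\rk_{\Q(t)}H_2(N_D;\Z[t^{\pm 1}])=b_2(N)$. For freeness, combining Poincar\'e duality with the universal coefficient spectral sequence and the vanishing $H_1(N_D,\partial N_D;\Z[t^{\pm 1}])=0$ yields
$$ H_2(N_D;\Z[t^{\pm 1}])\;\cong\;H^2(N_D,\partial N_D;\Z[t^{\pm 1}])\;\cong\;\overline{\Hom_{\Z[t^{\pm 1}]}(H_2(N_D,\partial N_D;\Z[t^{\pm 1}]),\Z[t^{\pm 1}])}. $$
The right-hand side is reflexive; finitely generated reflexive modules over the $2$-dimensional regular ring $\Z[t^{\pm 1}]$ are projective, and every finitely generated projective $\Z[t^{\pm 1}]$-module is free by the Laurent-polynomial version of Quillen--Suslin.

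For (2), the augmentation $\Z[t^{\pm 1}]\to\Z$, $t\mapsto 1$, specializes the equivariant cellular chain complex of $N_D^\infty$ to that of $N_D$ with $\Z$-coefficients and sends $\lambda_{N_D}$ to the ordinary intersection form on $H_2(N_D;\Z)$. A Mayer--Vietoris argument for $N=N_D\cup\nu(D)$, using that $\nu(D)$ is contractible and $N_D\cap\nu(D)\simeq S^1$, shows that the inclusion induces an isomorphism $H_2(N_D;\Z)\xrightarrow{\cong}H_2(N;\Z)$; since $D$ has codimension two one can perturb any pair of $2$-cycles off $D$, so the two intersection forms coincide under this identification.

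Part (3) is immediate from Proposition~\ref{prop:presents} applied to $W=N_D$: its hypotheses ($\pi_1(W)\cong\Z$, connected boundary $M_K$, surjective $\pi_1(M_K)\to\pi_1(W)$, and torsion $H_1(M_K;\Z[t^{\pm 1}])$) all hold, so any matrix $A$ representing $\lambda_{N_D}$ presents $\Bl_K$ by the linking form on $\Z[t^{\pm 1}]^n/A^T\Z[t^{\pm 1}]^n$, and in particular $A^T$ presents the Alexander module $H_1(M_K;\Z[t^{\pm 1}])$. The only genuinely subtle step in the whole argument is the freeness in~(1): the rank and torsion-freeness are automatic, but the upgrade to freeness requires the reflexivity/Quillen--Suslin observation.
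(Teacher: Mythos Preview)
Your proof is correct and supplies the details that the paper defers entirely to citations of~\cite{ConwayPowell}. The approach you take---computing the twisted homology in all degrees, using the Euler characteristic for the rank, obtaining freeness via a duality/projective-dimension/Quillen--Suslin argument, and invoking Proposition~\ref{prop:presents} for~(3)---is the standard one and is essentially what those references carry out.

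One small point on~(2): when you write that augmentation ``sends $\lambda_{N_D}$ to the ordinary intersection form on $H_2(N_D;\Z)$'', you are implicitly using that the natural map $H_2(N_D;\Z[t^{\pm 1}])\otimes_{\Z[t^{\pm 1}]}\Z_{\aug}\to H_2(N_D;\Z)$ is an isomorphism. This follows from the Milnor exact sequence associated to $0\to\Z[t^{\pm 1}]\xrightarrow{t-1}\Z[t^{\pm 1}]\to\Z_{\aug}\to 0$ together with the vanishing $H_1(N_D;\Z[t^{\pm 1}])=0$ that you already established (cf.\ Remark~\ref{rem:BF} and~\eqref{eq:Milnor}), but it is worth making this step explicit rather than folding it into the phrase ``specializes the chain complex''.
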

\begin{proof}
The first assertion is proved in~\cite[Claim 4 on page 43]{ConwayPowell} for~$N=D^4$ but the proof generalises to other simply-connected $4$-manifolds with boundary $S^3$.
The second assertion follows from~\cite[Lemma 5.10]{ConwayPowell}.
The third assertion is proved in~\cite[Lemma 3.2~(4)]{ConwayPowell}.
\end{proof}

Next, we restrict to~$\Z$-disks in~$N=\cpring$ with boundary a knot $K$ and recall that $\Delta_K$ denotes the unique symmetric representative of the Alexander polynomial of a knot~$K$ that evaluates to~$-1$ at $t=1$.
The next lemma shows that only one Hermitian form~$(H,\lambda)$ over $\Z[t^{\pm 1}]$ can arise as the equivariant intersection form of such a~$\Z$-disk exterior.

\begin{proposition}
\label{prop:barCP2}
If~$D \subset \cpring$ is a~$\Z$-disk with~$\partial D=K$, then the equivariant intersection form~$\lambda_{N_D}$ is represented by the size one matrix~$(\unaryminus\Delta_K)$.
\end{proposition}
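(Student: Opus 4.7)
The plan is to extract the polynomial representing $\lambda_{N_D}$ by applying the three parts of Lemma~\ref{lem:DiscsGeneral} in sequence, specialized to $N=\cpring$. Since $b_2(\cpring)=1$, part~(1) of that lemma guarantees that $H_2(N_D;\Z[t^{\pm 1}])$ is free of rank one over $\Z[t^{\pm 1}]$, so $\lambda_{N_D}$ is represented by a single Laurent polynomial $p(t) \in \Z[t^{\pm 1}]$. The Hermitian condition on $\lambda_{N_D}$ immediately forces $p(t)=\overline{p(t)}=p(t^{-1})$, i.e.\ $p$ is symmetric.

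Next, I would pin down $p(t)$ up to a unit of $\Z[t^{\pm 1}]$ using part~(3) of Lemma~\ref{lem:DiscsGeneral}: the transposed matrix $p(t)^T=p(t)$ presents the Alexander module $H_1(M_K;\Z[t^{\pm 1}])$, which has order $\Delta_K$ up to units (see Remark~\ref{rem:AlexanderPolynomial}). Hence $H_1(M_K;\Z[t^{\pm 1}]) \cong \Z[t^{\pm 1}]/(p(t))$, and comparing orders yields $p(t)=\pm t^k \Delta_K$ for some $k \in \Z$. Combining this with the fact that both $p$ and $\Delta_K$ are symmetric forces $t^{2k}=1$, so $k=0$ and therefore $p(t)=\pm \Delta_K$.

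It remains to fix the sign, which I would do using part~(2) of Lemma~\ref{lem:DiscsGeneral}: the matrix $p(1)$ represents the ordinary intersection form of $\cpring$, which is the unimodular form $(1)$. Since the only $\Z$-linear isometries of a size-one form are given by conjugation by $\pm 1$, which acts trivially on a $1 \times 1$ matrix, we conclude $p(1)=1$ on the nose. Applying the paper's convention that $\Delta_K(1)=-1$ then forces the sign to be negative, giving $p(t)=-\Delta_K$ as required.

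No step is really a conceptual obstacle: all of the substantive homological input is already packaged in Lemma~\ref{lem:DiscsGeneral}, and the remaining work is bookkeeping about symmetry (to eliminate the $t^k$ ambiguity) and about signs (to pass from $\pm \Delta_K$ to $-\Delta_K$ using the $t=1$ specialization). The mildly delicate point to be careful about is that part~(2) is a statement about isometry classes of integral forms rather than equality of matrices, but in rank one this distinction disappears.
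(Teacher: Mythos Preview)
Your proof is correct and follows essentially the same route as the paper's: both apply the three parts of Lemma~\ref{lem:DiscsGeneral} in the same order to get a rank-one Hermitian form, identify the representing polynomial as a symmetric representative of the Alexander polynomial (hence $\pm\Delta_K$), and then fix the sign via the $t=1$ specialization. Your treatment is slightly more explicit about eliminating the $t^k$ ambiguity via symmetry and about why the rank-one isometry class pins down $p(1)$ exactly, but the argument is the same.
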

\begin{proof}
The first item of Lemma~\ref{lem:DiscsGeneral} implies that $H_2(N_D;\Z[t^{\pm 1}]) \cong \Z[t^{\pm 1}]$ and the third item of Lemma~\ref{lem:DiscsGeneral} implies that~$\lambda_{N_D}$ is represented by a size one Hermitian matrix whose determinant is equal (up to multiplication by units) to the Alexander polynomial of~$K$.
Since~$\lambda_{N_D}$ is Hermitian, such a matrix must therefore be of the form~$(f(t))$ where~$f(t)$ is a symmetric representative of this Alexander polynomial.
There are two such representatives,  namely $\pm \Delta_K$.
Since the second item of Lemma~\ref{lem:DiscsGeneral} ensures that~$A(1)$ is a matrix for the intersection form of~$\cpring$, i.e.~$A(1)=(1)$, we deduce that $A(t)=(\unaryminus\Delta_K)$.
This concludes the proof of the proposition.
\end{proof}

We can now prove Theorem~\ref{thm:ExistenceIntro} from the introduction. 
\begin{customthm}{\ref{thm:ExistenceIntro}}
\label{thm:Existence}
Given a knot~$K$, the following assertions are equivalent: 
\begin{enumerate}
\item $K$ is~$\Z$-slice in~$\cpring$;
\item the Blanchfield form~$\Bl_K$ is presented by $\unaryminus\Delta_K(t)$;
\item $K$ can be converted into an Alexander polynomial one knot by switching a single positive crossing to a negative crossing;
\item $K$ can be converted into an Alexander polynomial one knot by a single positive generalized crossing change.
\end{enumerate}
\end{customthm}
\begin{proof}
The implication~$(1) \Rightarrow (2)$ follows from the combination of Propositions~\ref{prop:presents} and~\ref{prop:barCP2}.
The implication~$(2) \Rightarrow (3)$ is due Borodzik-Friedl~\cite[Theorem 5.1]{BorodzikFriedlLinking} and~$(3) \Rightarrow (4)$ is immediate.
The fact that~$(4) \Rightarrow (1)$ is well known and will be discussed in detail in the next section, but we outline the proof briefly.
Assume that $K$ can be transformed into an Alexander polynomial one knot $J$ by a generalized crossing change.
The generalized crossing change is performed by surgering $(S^3,K)$ along a $(-1)$-framed unknotted curve~$\gamma \subset S^3$ that is disjoint from $K$ and nullhomologous in $E_K$.
It follows that $K \subset S^3$ and $J \subset S^3$ are concordant in $W:=(S^3 \times I)\# \C P^2$  via a concordance~$C$.
Since $J$ has Alexander polynomial one, work of Freedman ensures that~$J$ bounds a $\Z$-disk $\Delta \subset D^4$~\cite{Freedman}.
Delaying a thorough discussion of orientations to Section~\ref{sec:CrossingChange}, the required disk $D \subset \cpring$ is obtained by using~$\Delta$ to cap off the concordance: 
$$(\cpring,D) := (\unaryminus((S^3 \times I)\# \C P^2),C) \cup_{(S^3,J)} (D^4,\Delta).$$
It remains to show that the complement of $D$, denoted $N_D$, has fundamental group~$\Z$.
 By construction of the disk~$D$, we know that~$\cpring_D=\unaryminus W_C \cup_{M_J} N_\Delta$ where $W_C$ (resp.  $N_\Delta$) denotes exterior of $C \subset W$ (resp. of $\Delta \subset D^4$).
 The claim now follows from a van Kampen argument using that $\gamma$ is nullhomologous in $E_K \subset M_K$ and that $\Delta$ is a $\Z$-disk.
\end{proof}

\section{Crossing change $\Z$-disks}
\label{sec:CrossingChange}

The proof of Theorem~\ref{thm:Existence} shows that if a single generalized positive crossing change turns~$K$ into a knot that bounds a $\Z$-disk in~$D^4$, then $K$ bounds a $\Z$-disk in~$\cpring$.
We consider this construction in more detail (Section~\ref{sub:Definition}), study the exterior of such disks (Section~\ref{sub:Exterior}) and their further properties (Section~\ref{sub:Basis}).

\subsection{The definition of generalized crossing change $\Z$-disks}
\label{sub:Definition}

We describe in more detail how a generalized positive crossing change leads to a $\Z$-disk in~$\cpring$, provided the knot resulting from the crossing change has Alexander polynomial one.
The set-up that follows is used to ensure that we are comparing discs in a fixed copy of $\cpring$.

\begin{convention}
\label{conv:Fixed}
We fix the following data once and for all.
\begin{itemize}
\item A copy of $D^4$ with boundary $S^3$.
\item A $(-1)$-framed oriented unknot $c \subset S^3$.
\item An orientation-reversing homeomorphism $h \colon S_{-1}^3(c) \xrightarrow{\cong} S^3.$
\item  A copy of $\cpring$ which we think of as
$$\cpring=\left( -\left((S^3 \times [0,1]) \cup_{c \times \lbrace 1 \rbrace} h^2 \right)  \right) \cup_h D^4.$$
Here $h^2$ denotes a $2$-handle whose core we denote $\core(h^2)$ and whose cocore we denote~$\cocore(h^2).$
We think of~$\mu_c:=\partial \cocore(h^2)$ both as a curve in $S^3 \times \lbrace 1 \rbrace$ and in $S^3_{-1}(c).$ 
\item An oriented knot $K \subset S^3$.
\end{itemize}
\end{convention}

We reverse the orientation of $(S^3 \times [0,1]) \cup h^2$ so that $S^3 \times  \lbrace 0 \rbrace \subset -\left((S^3 \times [0,1]) \cup h^2\right)$ has the same orientation as the usual of $S^3$, thought of as the boundary of $D^4$.
Recall that we orient the boundary of a manifold according to the ``outwards normal vector first" rule so that
$$\partial (S^3 \times [0,1]) =-S^3\times \{ 0\} \sqcup S^3 \times \{ 1 \}$$
For this reason, we frequently think of $K$ as a subset of $S^3\times \{ 0\} \subset -((S^3 \times [0,1]) \cup h^2). $
\begin{remark}
Given a simple closed curve $\eta \subset S^3$ and an ambient isotopy $(f_t \colon S^3 \to S^3)_{t \in  [0,1]}$ with~$f_0=\id_{S^3}$,  we think of the trace $C_{\eta,f}$ of $(f_t|_{\eta})_{t \in [0,1]}$ as a subset of~$-(S^3 \times [0,1])$ with boundary
$$\partial C_{\eta,f} =\eta \sqcup -f(\eta) \subset S^3\times \{ 0\} \sqcup - S^3 \times \{ 1 \}.$$
\end{remark}

\begin{notation}
\label{not:Data}
We also fix the following data which we will use frequently when working with~$\Z$-disks in $\cpring$ that arise from a single positive generalized crossing change. 
\begin{itemize}
\item  
An oriented curve $\gamma \subset E_K$ that is unknotted in $S^3$ with $\ell k(\gamma,K)=0$ such that a generalized positive crossing change along $\gamma$ results in an Alexander polynomial one knot.
\item 
An ambient isotopy $(f_t \colon S^3 \to S^3)_{t \in [0,1]}$ with $f_0=\id_{S^3}$ and $f_1(\gamma)=c$ as oriented curves; write
\begin{itemize}
\item[$\circ$] $C_{\gamma,f} \subset -\left((S^3 \times [0,1]) \cup_{c \times \lbrace 1 \rbrace} h^2\right)$ for the trace of the isotopy $(f_t|_{\gamma})_{t \in [0,1]}$.
\item[$\circ$] $C_{K,f} \subset -\left((S^3 \times [0,1]) \cup_{c \times \lbrace 1 \rbrace} h^2\right)$ for the trace of the isotopy $(f_t|_K)_{t \in [0,1]}$.
\item[$\circ$]  $K' \subset S^3_{-1}(c)$ for the knot $f_1(K) \subset S^3$ viewed as a subset of $S^3_{-1}(c)$.
\item[$\circ$] $J:=h(-K') \subset S^3$ for the image of  
$K' \subset S^3_{-1}(c)$
  under the orientation-reversing homeomorphism $h \colon S^3_{-1}(c) \xrightarrow{\cong} S^3.$
\end{itemize}
\item[$\circ$] A $\Z$-disk $\Delta \subset D^4$ with boundary the knot $J \subset S^3$.
\end{itemize}
\end{notation}

Note that in the topological category,  a knot bounds a $\Z$-disk in $D^4$ if and only if it has Alexander polynomial one~\cite{Freedman}; this disk is unique up to topological isotopy rel.\ boundary~\cite{ConwayPowellDiscs}.
Consequently,  in the topological category,  there is no ambiguity in choosing a disk $\Delta$ with boundary~$J$.
On the other hand,  the existence of a smoothly embedded disk~$\Delta$ with boundary $J$ is not a given and, if such a disk exists, it may not be unique~\cite{AkbulutZeeman,Hayden}.

\begin{definition}
\label{def:CrossingChangeZdisc}
Given a curve $\gamma \subset E_K$, an isotopy $(f_t \colon S^3 \to S^3)_{t \in [0,1]}$ and a $\Z$-disk $\Delta \subset D^4$ as in Notation~\ref{not:Data}, the \emph{generalized crossing change $\Z$-disk} $D_{\gamma}$ is
$$ (\cpring,D_{\gamma})=\left( - ((S^3 \times [0,1]) \cup_{c \times \lbrace 1 \rbrace} h^2),C_{K,f} \right) \cup_h (D^4,\Delta).$$ 
\end{definition}

The following proposition shows that up to isotopy rel.\ boundary, the disk $D_\gamma$ only depends on the knot~$K$ and on the surgery curve~$\gamma$.

\begin{proposition}\label{prop:diskdoesntdepend}
Let $K$ be a knot and let $\gamma \subset E_K$ be a sugery curve as in Notation~\ref{not:Data}.
Up to isotopy rel boundary,  a generalized crossing change $\Z$-disk $D_\gamma$ with boundary $K$ depends neither on the choice of the ambient isotopy $(f_t)_{t \in [0,1]}$ nor on the $\Z$-disk $\Delta$.
\end{proposition}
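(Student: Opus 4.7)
The plan is to prove the two independence statements separately.

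\textbf{Independence from $\Delta$.} By the hypothesis on $\gamma$, the knot $J=h(-f_1(K))$ has Alexander polynomial one. By Freedman~\cite{Freedman} such a $\Z$-disk $\Delta \subset D^4$ exists, and by~\cite{ConwayPowellDiscs} any two such disks are ambient isotopic in $D^4$ relative to $\partial D^4$. Such an isotopy of $D^4$ rel $J$ extends by the identity across the complementary $-((S^3 \times [0,1]) \cup h^2)$ piece of $\cpring$ to an ambient isotopy of $\cpring$ rel $K$ taking one $D_\gamma$ to another.

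\textbf{Independence from $(f_t)$.} Given two isotopies $(f_t),(f_t')$ as in Notation~\ref{not:Data}, the plan is to construct a continuous $2$-parameter family $(f_t^s)_{s,t \in [0,1]}$ of homeomorphisms of $S^3$ with
$$f_t^0 = f_t, \quad f_t^1 = f_t', \quad f_0^s = \id_{S^3}, \quad \text{and} \quad f_1^s(\gamma) = c \text{ as oriented curves for every } s.$$
Applying Definition~\ref{def:CrossingChangeZdisc} at each $s$, together with a continuous family of caps $\Delta^s \subset D^4$ bounded by $h(-f_1^s(K))$ (which exists thanks to the first part), yields a continuous $1$-parameter family of $\Z$-disks $D_\gamma^s$ in $\cpring$ with common boundary $K$. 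The topological ambient isotopy extension theorem then upgrades this family to an ambient isotopy of $\cpring$ rel $K$ taking $D_\gamma^0$ to $D_\gamma^1$.

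The existence of the $2$-parameter family reduces to path-connectedness of the space
$$\mathcal P = \{(f_t)_{t \in [0,1]} \ : \ f_0 = \id_{S^3}, \ f_1(\gamma) = c \text{ as oriented curves}\},$$
which in turn follows from path-connectedness of $\Homeo^+(S^3)$ together with path-connectedness of the subspace of $\Homeo^+(S^3)$ consisting of homeomorphisms sending $\gamma$ to $c$ as oriented curves; this last fact is standard, using the transitive action of $\Homeo^+(S^3)$ on oriented unknots with path-connected stabilizer.

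The principal obstacle is the technical assembly of this argument in the topological category, and specifically the requirement that the caps $\Delta^s$ be chosen continuously in $s$ rather than merely pointwise up to isotopy; this needs a parametrized form of~\cite{ConwayPowellDiscs}. Alternatively, one can bypass parametrized uniqueness by breaking the argument into two stages: first deform the isotopy while keeping its endpoint fixed (so that $J$ and hence $\Delta$ can also be kept fixed), and then handle the residual change of endpoint using the first part alone.
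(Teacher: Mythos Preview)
Your overall strategy matches the paper's: construct a homotopy $(f_t^s)$ through the space $\mathcal P$ and turn it into an isotopy of disks. But there are two genuine gaps.

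\textbf{The $\pi_1$ obstruction.} Your justification for path-connectedness of $\mathcal P$ is incorrect. Path-connectedness of $\Homeo^+(S^3)$ and of the stabilizer of an oriented unknot does \emph{not} imply path-connectedness of $\mathcal P$: for $G$ path-connected and $A \subset G$ path-connected, the space of paths in $G$ from a point to $A$ has $\pi_0$ equal to the cokernel of $\pi_1(A) \to \pi_1(G)$, which need not vanish (take $G = S^1$, $A$ a point). What is actually needed is the Smale conjecture~\cite{HatcherSmale}, giving $\pi_1(\Homeo^+(S^3)) \cong \Z/2$, together with the observation that the generating loop can be represented by rotations about an axis fixing $c$ with its orientation, hence lies in the stabilizer. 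The paper carries out exactly this argument: it first arranges $f_1 = g_1$ by hand, and then either the loop $f \cdot \overline g$ is already null, or one composes with the rotation loop to make it so. Your bypass (``deform the isotopy while keeping its endpoint fixed'') runs into precisely this $\pi_1$ issue and cannot succeed without invoking Smale.

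\textbf{Avoiding parametrized caps.} Even granting a homotopy $(f_t^s)$, you cannot ``apply Definition~\ref{def:CrossingChangeZdisc} at each $s$'' and cap with a continuous family $\Delta^s$; as you note, this would require a parametrized form of~\cite{ConwayPowellDiscs}, which you do not have. The paper avoids this entirely. Rather than moving the disk, it builds for each $s$ a homeomorphism $\Phi_s \colon \cpring \to \cpring$ rel boundary, piece by piece: on $S^3 \times I$ set $\Phi_s = \mathbf{h}_s \circ \mathbf{f}^{-1}$; extend over the $2$-handle by a collar trick damping to the identity on the attaching region; extend over $D^4$ by the Alexander trick. Then $\Phi_1$ carries the concordance $C_{K,f}$ to $C_{K,g}$, so $\Phi_1(D_{\gamma,f})$ and $D_{\gamma,g}$ agree outside $D^4$ and differ only in the capping $\Z$-disk. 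A single, non-parametrized application of~\cite{ConwayPowellDiscs} finishes. Your alternative gestures toward this but does not supply the construction.
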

\begin{proof}
Fix two ambient isotopies~$(f_t)_{t \in [0,1]},(g_t)_{t \in [0,1]}$ of~$S^3$ with $f_0=\id_{S^3}=g_0$ and $f_1,g_1$ taking~$\gamma$ to~$c$.
During this proof, we write~$D_{\gamma,f}$ and~$D_{\gamma,g}$ for the the resulting generalized crossing change $\Z$-disks.
Although not included in the notation,  we note that~$D_{\gamma,f}$ and~$D_{\gamma,g}$ a priori also depend on the disks used to cap off the traces $C_{K,f}$ and $C_{K,g}$.
In a nutshell, the idea of the proof is to use the Smale conjecture~\cite{HatcherSmale} to show that the isotopies~$(f_t)_{t \in [0,1]}$ and~$(g_t)_{t \in [0,1]}$ are isotopic and to use this isotopy of isotopies, together with the fact that $\Z$-discs in $D^4$ are unique up to isotopy rel. boundary~\cite{ConwayPowell}, to prove that~$D_{\gamma,f}$ and~$D_{\gamma,g}$ are isotopic.
We now expand on this outline.

Our goal is to define a $1$-parameter family of rel.\  boundary homeomorphisms $\Phi_s \colon \cpring \to \cpring$ with $\Phi_0=\id_{\cpring}$ and $\Phi_1$ taking one disk to another.

    \begin{claim}\label{claim:sameend}
If~$(f_t)$ and~$(g_t)$ are isotopic through a 2-parameter family~$(h_{t,s} \colon S^3\times \{ t \}\times \{ s \}\to S^3)_{t,s \in I}$ with 
$$h_{t,0}=f_t, \quad  h_{t,1}=g_t,  \quad  h_{0,s}=\id_{S^3},  \quad h_{1,s}(\gamma)=c,$$
then~$D_{\gamma,f}$ and~$D_{\gamma,g}$ are isotopic rel.\ boundary in~$\cpring$.
\end{claim}

In what follows we will sometimes think of $\mathbf{f}:=(f_t)$ and~$\mathbf{g}:=(g_t)$ as paths in the space of self-homeomorphisms~$\Homeo(S^3)$.
The assumptions of the claim are then that there is a homotopy $\mathbf{h} \colon I \times I \to \Homeo(S^3)$ with $\mathbf{h}_0=\mathbf{f}, \mathbf{h}_1=\mathbf{g}$ as well as~$\mathbf{h}_s(0)=\id_{S^3}$ and $\mathbf{h}_s(1)(\gamma)=c$ for every $s \in I$.

\begin{proof}[Proof of Claim \ref{claim:sameend}]
We have written $\cpring$ as the union of the three pieces $S^3 \times I$, $h^2$, and $D^4$:
\[
\cpring = \left( -\left((S^3 \times I) \cup_{c \times \lbrace 1 \rbrace} h^2 \right)  \right) \cup_{h} D^4.
\]
We construct a one-parameter family of self-homeomorphisms
\[
\Phi_s \colon \cpring \rightarrow \cpring
\]
with the following properties:
\begin{enumerate}
\item $\Phi_0$ is the identity;
\item $\Phi_s$ is the identity on $\partial \cpring$ for all $s$; 
\item $\Phi_s$ preserves each of the pieces $S^3 \times I$, $h^2$, and $D^4$ setwise for all $s$; and,
\item $\Phi_1(C_{K,f}) = C_{K,g}$.
\end{enumerate}
This suffices to prove the claim. Indeed,~$(\Phi_s)_{s \in [0,1]}$ tautologically constitutes an ambient isotopy of~$\cpring$ sending the disk~$D_{f, \gamma}$ to the disk~$\Phi_1(D_{f, \gamma})$. 
Due to the last two conditions above,~$\Phi_1(D_{\gamma,f})$ and~$D_{\gamma,g}$ coincide in~$S^3 \times I$. 
Hence~$\Phi_1(D_{\gamma,f})$ and~$D_{\gamma,g}$ differ only by a choice of capping~$\Z$-disk in~$D^4$; any two such~$\Z$-disks are topologically isotopic rel boundary~\cite{ConwayPowellDiscs}.

We build~$\Phi_s$ by first defining a one-parameter family of self-homeomorphisms of~$S^3 \times I$ satisfying the desired conditions. We then extend this family to a one-parameter family of self-homeomorphisms of~$(S^3 \times I) \cup_{c \times \lbrace 1 \rbrace} h^2$, and then successively to a one-parameter family of self-homeomorphisms of all of~$\cpring$. The construction of~$\Phi_s$ on~$S^3 \times I$ is immediate from the hypotheses of the claim by defining
\[
(\Phi_s)|_{S^3 \times I} = \mathbf{h}_s \circ \mathbf{h}_0^{-1} = \mathbf{h}_s \circ \mathbf{f}^{-1}.
\]
Here, we briefly abuse notation by considering $\mathbf{f}$ as a map from $S^3 \times I$ to itself sending $(x, t)$ to~$(f_t(x), t)$, and likewise for $\mathbf{h}_s$.
This is clearly consistent with the desired conditions.

We now extend~$\Phi_s$ over the~$2$-handle attachment. 
Note~$\Phi_s$ maps~$c \times \{1\}$ to itself for all~$s$; without loss of generality, we may assume~$\Phi_s$ preserves a tubular neighborhood~$\overline{\nu}(c)$ of~$c \times \{1\}$ in~$S^3 \times \{1\}$. 
Denote the induced one-parameter family of self-diffeomorphisms of~$\overline{\nu}(c)$ by~$\phi_s \colon \overline{\nu}(c) \rightarrow \overline{\nu}(c)$. 
Note that~$\phi_0=\id_{\overline{\nu}(c)}$ because~$\Phi_0=\id_{S^3}.$
For convenience of notation, we insert a collar~$\overline{\nu}(c) \times I$ between~$S^3 \times \{1\}$ and the~$2$-handle~$h^2$. 
We thus consider the~$2$-handle attachment as being $(S^3 \times I) \cup (\overline{\nu}(c) \times I) \cup h^2$, where~$\overline{\nu}(c) \subseteq S^3 \times \{1\}$ is glued to~$\overline{\nu}(c) \times \{0\}$ via the identity and~$h^2$ is attached along~$\overline{\nu}(c) \times \{1\}$. 
Extend~$\Phi_s$ over~$\overline{\nu}(c) \times I$ by defining
\[
(\Phi_s)|_{\overline{\nu}(c) \times I} (x, t) = (\phi_{(1-t)s}(x), t).
\]
Note that~$\Phi_s$ is the identity on~$\overline{\nu}(c) \times \{1\}$ for all~$s$. 
Hence we may identically extend
\[
(\Phi_s)|_{h^2} = \id.
\]
This produces the desired extension of~$\Phi_s$ over the~$2$-handle attachment.

Finally, we observe that~$\Phi_s$ may be extended over the capping~$4$-ball~$D^4$. 
Indeed, our definition of~$\Phi_s$ so far gives a one-parameter family~$\psi_s$ of self-diffeomorphisms of~$S^3 = \partial D^4$. 
This extends to a one-parameter family of self-homeomorphisms of~$D^4$ via the usual radial extension/Alexander trick:
\[
(\Phi_s)|_{D^4}(x, r) = (\psi_s(x), r)
\]
where~$x \in S^3$ and~$r$ is the radial coordinate of~$D^4$. This completes the proof of the claim.
\end{proof}
\color{black}

We now prove that the assumption of Claim~\ref{claim:sameend} always holds.

\begin{claim}\label{claim:makesame}
There is a homotopy $\mathbf{h} \colon I \times I \to \Homeo(S^3)$ with $\mathbf{h}_0=\mathbf{f}, \mathbf{h}_1=\mathbf{g}$ as well as~$\mathbf{h}_s(0)=\id_{S^3}$ and $\mathbf{h}_s(1)(\gamma)=c$ for every $s \in I$.
\end{claim}

Note that $h$ is a free homotopy, not a homotopy rel.\ endpoints.

\begin{proof}[Proof of Claim \ref{claim:makesame}]
We start with an observation whose proof is left to the reader.
Fix a path of homeomorphisms $\boldsymbol{\eta} \colon I \to \Homeo(S^3)$ with $\eta_0=\id_{S^3}$.
If $\mathbf{r} \colon I \to \Homeo(S^3)$ is a path of homeomorphisms with~$r_0=\id_{S^3}$ and $r_t(c)=c$ for every $t$,  then the path $\boldsymbol{\eta}$ is freely homotopic to the path $\boldsymbol{\eta * r}$ given by
$$
(\eta  * r)_t:=
\begin{cases}
\eta_{2t} &\quad t \in [0,\frac{1}{2}]  \\ 
r_{2t-1} \circ \eta_1 &\quad  t \in [\frac{1}{2},1]
\end{cases}
$$
through a homotopy $\boldsymbol{\varphi} \colon I \times I \to \Homeo(S^3)$ with $\boldsymbol{\varphi}_0=\boldsymbol{\eta},\boldsymbol{\varphi}_1=\boldsymbol{r * \eta}$ as well as $\varphi_s(0)=\eta_0=\id_{S^3}$ and $\varphi_s(1)(c)=r_s(c)=c$ for every $s \in I$.
%

We now begin the proof of the claim proper and assert that,  without loss of generality,  we can assume that~$f_1$ and~$g_1$ agree pointwise on a tubular neigbhorhood~$\overline{\nu}(\gamma)$ of~$\gamma$.
Let~$\mathbf{r}  \colon I \to \Homeo(S^3)$ be a path of homeomorphisms with~$r_0=\id_{S^3}$ and~$r_1 \circ f_1|_{\overline{\nu}(\gamma)} =g_1|_{\overline{\nu}(\gamma)}$.
The observation implies that~$\mathbf{f}$ is homotopic to~$\boldsymbol{f * r}$ through a homotopy~$\boldsymbol{\varphi}$ with~$\varphi_0(s)=\id_{S^3}$ and~$\varphi_1(s)(c)=c$ for every~$s \in I$.
Since~$(f*r)_1|_{\overline{\nu}(\gamma)}=(r_1 \circ f_1)|_{\overline{\nu}(\gamma)}=g_1|_{\overline{\nu}(\gamma)}$,  this concludes the proof of the assertion.


Next we assert that without loss of generality we can assume that~$f_1$ and ~$g_1$ agree on the union of~$\overline{\nu}(\gamma)$ with a closed tubular neigbhorhood of a disk bounded by~$\gamma$.
Let~$D \subset S^3$ be a smoothly embedded disk bounded by~$\gamma$ intersecting~$\overline{\nu}(\gamma)$ only in a collar of $\partial D$. 
Initially~$f_1(D)$ and~$g_1(D)$ are disks in~$S^3$ with boundary~$c$ that may be distinct.
An application of Schoenflies's theorem shows that these disks are ambiently isotopic rel.\  boundary via an ambient isotopy~$(r_t \colon S^3 \to S^3)_{t \in [0,1]}$ with~$r_0=\id_{S^3}$ and~$r_1 \circ f_1|_D=g_1|_D$.
This can be extended so that~$r_1 \circ f_1|_{\overline{\nu}(D)}=g_1|_{\overline{\nu}(D)}$.
The assertion again follows from the observation at the beginning of the proof.


We now assert that without loss of generality we can assume that~$f_1$ and~$g_1$ agree pointwise on the whole of~$S^3$.
Presently, the homeomorphisms~$f_1$ and~$g_1$ agree pointwise on~$\overline{\nu}(D)$ but might not agree on the closed ball~$B=S^3 \setminus f_1(\nu(D))$.
They do however agree on~$\partial B$,  so Alexander's trick ensures the existence of a family~$(\widetilde{r}_t \colon B \to B)_{t \in [0,1]}$ of rel.\ boundary homeomorphisms with~$\widetilde{r}_0=\id_B$ and~$\widetilde{r}_1 \circ f_1|_B=g_1|_B.$
 Since~$\widetilde{r}_t|_{\partial B}=\id_{\partial B}$ for every~$t \in [0,1]$, we can extend~$\widetilde{r}_t$ by the identity on~$S^3 \setminus \operatorname{Int}(B)$ resulting in a path~$\mathbf{r} \colon I \to \Homeo(S^3)$ with~$r_0=\id_{S^3},r_1 \circ f_1=g_1$ and~$r_1(s)(c)=c$ for every~$s \in I$.
The assertion again follows from the observation.

We now prove Claim~\ref{claim:makesame} for paths of homeomorphisms~$\mathbf{f}=(f_t)$ and~$\mathbf{g}=(g_t)$ with~$f_1=g_1$.
Consider the loop in~$[\boldsymbol{f \cdot \overline{g}}] \in \pi_1(\Homeo(S^3),\id_{S^3})$:
     \[
(f \cdot \overline{g})_t
     =\begin{cases}
     f_{2t}& t \in [0,\frac{1}{2}] \\
     g_{2t-1}& t \in [\frac{1}{2},1].
     \end{cases}
     \] 
     If~$[\boldsymbol{f \cdot \overline{g}}]=1$, then~$f,g$ are homotopic rel.~$f_0=g_0=\id_{S^3}$ and~$f_1=g_1$ through a homotopy~$\mathbf{h}$.
This homotopy satisfies the conditions of the claim because $h_s(0)=f_0=\id_{S^3}$ and $h_s(1)(\gamma)=f_1(\gamma)=c.$

Assume now that~$[\boldsymbol{f \cdot \overline{g}}] \neq 1 \in \pi_1(\Homeo(S^3),\id_{S^3})$.
  The Smale conjecture~\cite{HatcherSmale} gives the isomorphism~$\pi_1(\Homeo(S^3),\id) \cong  \pi_1(O(4)) \cong \Z_2$, where the generator is represented by the loop of homeomorphisms $\mathbf{r}=(r_t)_{t \in [0,1]}$ with $r_t$ the rotation of angle~$2\pi t $ about an axis fixing~$c$ setwise.
Since~$r_0=\id_{S^3}$ and~$r_t(c)=c$ for every~$t$,  we can use the observation at the begining of the proof of this claim to reduce ourselves to proving that $[\boldsymbol{(f * r) \cdot \overline{g}}] = 1\in \pi_1(\Homeo(S^3),\id_{S^3})$: thanks to the previous paragraph, replacing~$\mathbf{f}$ by~$\mathbf{f *r}$ would then give the desired outcome.

We now prove that $[\boldsymbol{(f * r) \cdot \overline{g}}] = 1\in \pi_1(\Homeo(S^3),\id_{S^3})$. This is equivalent to proving that~$[\boldsymbol{\overline{g} \cdot (f*r)}] = 1 \in  \pi_1(\Homeo(S^3),f_1)$.
Consider the nontrivial loop $\boldsymbol{r \circ f_1}$ of homeomorphisms  based at $f_1$ and defined by~$(r \circ f_1)_t:=r_t \circ f_1$.
Note that
$\boldsymbol{\overline{g} \cdot (f*r)}
 \simeq 
 \boldsymbol{(\overline{g} \cdot f) \cdot (r \circ f_1)}.$
 Since neither loop is nullhomotopic and $\pi_1(\Homeo(S^3),f_1)\cong \Z_2$, the conclusion follows. 
    \end{proof}
    Claim \ref{claim:makesame} implies that the paths of homeomorphisms $\mathbf{f}$ and $\mathbf{g}$ are homotopic via a homotopy satisfying the hypotheses of Claim \ref{claim:sameend} and thus that $D_{\gamma,f},D_{\gamma,g}$ are isotopic rel.\ boundary.
\end{proof}

The orientations of $c$ and $\gamma$ are essential in the definition of $D_{\gamma}$.
The two disks $D_{\gamma}$ and $D_{-\gamma}$ are related by a homeomorphism of $\cpring$ inducing the map $-1$ on $H_2(\cpring)=\mathbb{Z}$, but in general need not be isotopic rel.\ boundary. 

\subsection{The exterior of a generalized crossing change $\Z$-disk.}
\label{sub:Exterior}

The goal of this section is to describe a decomposition of the exterior of a generalized crossing change $\Z$-disk and its infinite cyclic cover.

\begin{notation}
Continuing with Notation~\ref{not:Data}, we write $M_{K'}$ for the~$0$-surgery on $K' \subset S^3_{-1}(c)$ and $E_{K'} \subset S^3_{-1}(c)$ for its exterior.
Since $J:=h(K')$, observe that our fixed homeomorphism $h \colon S^3_{-1}(c) \xrightarrow{\cong} S^3$ induces homeomorphisms $E_{K'} \xrightarrow{\cong} E_J$ and $M_{K'} \xrightarrow{\cong} M_J$ that we also denote by~$h$. 
\end{notation}

For convenience,  we write the exterior of the trace $C_{K,f}$ of the isotopy $(f_t|_K)_{t \in [0,1]}$ as
\[
W := (S^3 \times I) \setminus \nu(C_{K, f}).
\]
This way, the exterior~$N_{D_\gamma}$ of a generalized crossing change $\Z$-disk decomposes into three pieces,  where (suppressing orientations for ease of notation):
\[
N_{D_\gamma} =W \cup h^2  \cup N_\Delta.
\]
Here, we initially view the union as being formed from left-to-right, in which case the handle~$h^2$ is attached to~$W$ along the curve~$c \subset S^3 \times \{1\}$ and $W \cup h^2$ is attached to $N_\Delta$ using the homeomorphism~$h \colon \partial (W \cup h^2) \supset M_{K'} \xrightarrow{\cong} M_J =\partial N_\Delta$.
We may also consider the union as being formed from right-to-left, in which case~$h^2$ is attached to~$N_\Delta$ via the cocore curve~$\mu_c$. Note in the latter situation, we view~$W$ as then being attached along the subset of its boundary given by~$E_{f_1(K)} \subset S^3 \times \{1\}$.

Now fix a basepoint~$z \in E_K$ and consider the infinite cyclic cover~$p \colon N_{D_\gamma}^\infty \to N_{D_\gamma}$ corresponding to the isomorphism~$\smash{\pi_1(N_{D_\gamma}, z) \cong \Z}$.
Denote the restricted covers by
\[
W^\infty:=p^{-1}(W) \quad \text{and} \quad N_\Delta^\infty:=p^{-1}(N_\Delta),
\]
so that the infinite cyclic cover of $N_{D_\gamma}$ decomposes as
\[
N_{D_\gamma}^\infty = W^\infty \cup \bigcup_{j \in \Z} t^j \tilde{h}^2 \cup N_\Delta^\infty.
\]
The inclusion maps~$W \hookrightarrow N_{D_\gamma}$ and~$N_\Delta \hookrightarrow N_{D_\gamma}$ induce isomorphisms on first homology, so the restricted covers~$W^\infty$ and~$N_\Delta^\infty$ are homeomorphic (as manifolds) to the infinite cyclic covers of~$W$ and~$N_{\Delta}$ defined using some (potentially different) choice of basepoint.
As before, we may view the above union as being formed from left-to-right or right-to-left, with the handles~$t^j \tilde{h}^2$ being attached along~$t^j \tilde{c}$ or~$t^j \tilde{\mu}_c$, respectively. Note that in the right-to-left case, the final piece~$W^\infty$ is then attached along the subset
\[
E_{f_1(K)}^\infty := p^{-1}(E_{f_1(K)})
\]
of its boundary.
We argue that~$N_{D_\gamma}^\infty$ deformation retracts onto the subset of $\bigcup_{j \in \Z} t^j \tilde{h}^2 \cup N_\Delta^\infty$ as this will be helpful for later homological calculations.

\begin{remark}
\label{rem:DeformationRetract}
We have a retract from~$W$ onto~$E_{f_1(K)}$ given by the map
\[
r(x, t) = (f_1 \circ f_t^{-1}(x), 1).
\]
This is readily seen to be a deformation retract via the homotopy
\[
H_s(x, t) = (f_{t + s(1-t)} \circ f_t^{-1}(x), t + s(1-t)).
\]
It follows that~$W^\infty$ deformation retracts onto the subset~$E^\infty_{f_1(K)}$, and thus that~$N_{D_\gamma}^\infty$ deformation retracts onto the subset
\[
\bigcup_{j \in \Z} t^j \tilde{h}^2 \cup N_\Delta^\infty.
\]
\end{remark}

\subsection{A preferred basis for $H_2(N_{D_\gamma}^\infty)$.}
\label{sub:Basis}

We saw in Lemma~\ref{lem:DiscsGeneral} that for any $\Z$-disk $D \subset \cpring$ with boundary $K$, the $\Z[t^{\pm 1}]$-module $H_2(N_D^\infty)$ is free of rank one.
In this section, when $D$ is a generalized crossing change $\Z$-disk,  we describe a surface in the infinite cyclic cover of this disk exterior that freely generates this module.


\begin{construction}[Bases for $H_2(N_{D_\gamma}^\infty)$ and $H_2(N_{D_\gamma}^\infty,M_K^\infty)$]
\label{cons:Basis}
Given a curve $\gamma \subset  E_K$, an ambient isotopy $(f_t \colon S^3 \to S^3)_{t \in [0,1]}$ and a $\Z$-disk $\Delta \subset D^4$ as in Notation~\ref{not:Data},  we construct a surface~$\Sigma \subset N_{D_\gamma}^\infty$ that freely generates~$H_2(N_{D_\gamma}^\infty)\cong \Z[t^{\pm 1}]$.
As in Section~\ref{sub:Exterior}, we set 
$$W:=(S^3 \times I) \setminus \nu(C_{K, f})$$
 and write $p \colon N_{D_\gamma}^\infty \to N_{D_\gamma}$ for  the infinite cyclic cover so that $N_{D_\gamma}^\infty$ decomposes as
 $$N_{D_\gamma}^\infty=\left( \unaryminus \left( W^\infty \cup \bigcup_{j \in \Z} t^j \widetilde{h}^2\right)\right) \cup N_\Delta^\infty.$$
Recall from Notation~\ref{not:Data} that $\mu_c$ denotes $\partial \cocore(h^2)$.
Thus the loop $\widetilde{\mu}_c \subset M_{J}^\infty=\partial N_\Delta^\infty$ bounds $\cocore(\widetilde{h}^2)$.
On the other hand,  since $H_1(M_J^\infty)=0$,  this loop bounds a surface $S \subset M_J^\infty$.
One can assume that $S \cap \core(h^2)=\emptyset$.
The union of these two surfaces gives rise to the closed surface
$$\Sigma:=S \cup_{\widetilde{\mu}_c} \cocore(\widetilde{h}^2) \subset N_{D_\gamma}^\infty.$$
Generically, we can assume that the surface $S\subset M_J^\infty$ projects down to an immersed surface in~$M_J$.
As we mentioned Remark~\ref{rem:DeformationRetract},  $W^\infty$ deformation retracts onto $E_{f_1(K)}^\infty$,  and since $H_2(N_\Delta^\infty)=0$ (recall Lemma~\ref{lem:DiscsGeneral}) a Mayer-Vietoris argument therefore shows that
$$H_2(N_{D_\gamma}^\infty)
\cong H_2\left(\bigcup_{j \in \Z} t^j \widetilde{h}^2 \cup N_\Delta^\infty \right)
\cong \Z[t^{\pm 1}]$$
is freely generated by $[\Sigma]$.
As explained for example in~\cite[proof of Lemma 3.2]{ConwayPowell}, the universal coefficient spectral sequence implies that the following composition is an isomorphism
$$\ev \circ \PD^{-1} \colon H_2(N_{D_\gamma},M_K;\Z[t^{\pm 1}]) \xrightarrow{\cong} \overline{\Hom_{\Z[t^{\pm 1}]}(H_2(N_{D_\gamma};\Z[t^{\pm 1}]),\Z[t^{\pm 1}])}=:H_2(N_{D_\gamma};\Z[t^{\pm 1}])^*  $$
\color{black}
It follows that~$H_2(N_{D_\gamma}^\infty,M_K^\infty)\cong \Z[t^{\pm 1}]$ is generated by the homology class of a surface Poincar\'e dual to~$\Sigma$, for example the class of 
$$\widehat{\core}(\widetilde{h}^2):=\core(\widetilde{h}^2) \cup_{\widetilde{c} \times \lbrace 1 \rbrace} \widetilde{C}_{\gamma,f}.$$
Here recall from Notation~\ref{not:Data} that~$\widetilde{C}_{\gamma,f}$ denotes the lift of the trace of the isotopy from $\gamma$ to $c$.
Details on why $[\Sigma]$ and $\widehat{\core}(\widetilde{h}^2)$ are Poincar\'e dual will be given during the proof Proposition~\ref{prop:BelongstoG}.

Summarizing the outcome of this construction, we have
\begin{align*}
&H_2(N_{D_\gamma}^\infty)\cong \Z[t^{\pm 1}] \langle [\Sigma] \rangle, \\ 
& H_2(N_{D_\gamma}^\infty,M_K^\infty)\cong \Z[t^{\pm 1}] \langle [\widehat{\core}(\widetilde{h}^2)]\rangle.
\end{align*}
Note for later use that $\partial (\widehat{\core}(\widetilde{h}^2))=\widetilde{\gamma}$.
\end{construction}

\begin{construction}[A basis for $H_2(\cpring)$]
\label{cons:H2CP2}
We define a generator
$$x_{\C P^2} \in H_2(\cpring)$$
by capping off $\cocore(h^2)$ with any immersed surface $F \looparrowright E_J \subset S^3_{-1}(c)$  with boundary $\mu_c$.
Here,  as previously,  we are using the decomposition~$\cpring= \left(\unaryminus ( (S^3 \times [0,1]) \cup_{c \times \lbrace 1 \rbrace} h^2) \right) \cup D^4$.
\end{construction}

Given a generalized crossing change $\Z$-disk $D_\gamma$ with boundary $K$, the next proposition describes some further properties of the homology classes $[\widehat{\core}(\widetilde{h}^2)] \in H_2(N_{D_\gamma}^\infty,M_K^\infty)$ and~$[\widetilde{\gamma}] \in H_1(M_K^\infty)$.

\begin{proposition}
\label{prop:BelongstoG}
Let $K$ be a knot and let $D_\gamma \subset \cpring$ be a generalized crossing change $\Z$-disk with $\partial D_\gamma =K$.
The basis elements $[\Sigma] \in H_2(N_{D_\gamma}^\infty)$ and $[\widehat{\core}(\widetilde{h}^2)] \in H_2(N_{D_\gamma}^\infty,M_K^\infty)$ from Construction~\ref{cons:Basis} satisfy the following properties:
\begin{enumerate}
\item The connecting homomorphism 
$$\partial_{D_\gamma} \colon H_2(N_{D_\gamma}^\infty,M_K^\infty) \to H_1(M_K^\infty)$$
is entirely determined by the equality $\partial_{D_\gamma}([\widehat{\core}(\widetilde{h}^2)])=[\widetilde{\gamma}]$.
\item The classes $[\widehat{\core}(\widetilde{h}^2)]$ and $[\Sigma]$ are Poincar\'e dual:  
$$\operatorname{ev} \circ \operatorname{PD}^{-1} ([\widehat{\core}(\widetilde{h}^2)])=[\Sigma]^*.$$
\item The homology class $[\widetilde{\gamma}] \in H_1(M_K^\infty)$ belongs to
\begin{equation}
\label{eq:GK}
\mathcal{G}_K:= \left\lbrace x \in H_1(M_K^\infty) \  \Big|  \ x \text{  is a generator and } \Bl_K(x,x)
=\frac{1}{\Delta_K} \right\rbrace.
\end{equation}
\item Under the composition
$$ H_2(N_{D_\gamma}^\infty) \xrightarrow{p_*} H_2(N_{D_\gamma}) \xrightarrow{\phi_{D_\gamma},\cong} H_2(\cpring)$$
of the projection and inclusion induced maps, the class $[\Sigma]$ is mapped to the preferred generator $x_{\C P^2}$ from Construction~\ref{cons:H2CP2}.
\end{enumerate}
\end{proposition}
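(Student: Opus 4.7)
The plan is to tackle the four parts in order, each building on the previous.

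Part (1) is immediate from Construction \ref{cons:Basis}: since $\partial_{D_\gamma}$ is $\Z[t^{\pm 1}]$-linear, it is determined by its value on the generator $[\widehat{\core}(\widetilde h^2)]$ of $H_2(N_{D_\gamma}^\infty, M_K^\infty)$, and on the chain level $\partial \widehat{\core}(\widetilde h^2) = \widetilde \gamma$ as noted at the end of that construction.

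For part (2), I will compute the equivariant intersection $\lambda^\partial_{N_{D_\gamma}}([\Sigma], [\widehat{\core}(\widetilde h^2)])$ geometrically and show it equals $1$; unpacking the definition of $\lambda^\partial$ via $\ev \circ \PD^{-1}$, this is equivalent to the Poincar\'e-duality statement $\ev \circ \PD^{-1}([\widehat{\core}(\widetilde h^2)]) = [\Sigma]^*$. Using the decompositions $\Sigma = S \cup \cocore(\widetilde h^2)$ and $\widehat{\core}(\widetilde h^2) = \core(\widetilde h^2) \cup \widetilde{C}_{\gamma,f}$, three of the four possible pairings of pieces are disjoint, either from the condition $S \cap \core(h^2) = \emptyset$ built into Construction \ref{cons:Basis}, or because the pieces lie in disjoint subsets of the decomposition $N_{D_\gamma}^\infty \simeq W^\infty \cup \bigcup_j t^j \widetilde h^2 \cup N_\Delta^\infty$ of Section \ref{sub:Exterior}. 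The remaining pairing is the single transverse intersection of the core and cocore of the 2-handle, and no nontrivial translate $t^j \widehat{\core}(\widetilde h^2)$ meets $\Sigma$. So the equivariant count is $\pm 1$, with the sign fixed to $+1$ by orientation conventions.

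For part (3), I will combine (1) and (2) with the long exact sequence
\[
H_2(N_{D_\gamma}^\infty) \xrightarrow{i_*} H_2(N_{D_\gamma}^\infty, M_K^\infty) \xrightarrow{\partial_{D_\gamma}} H_1(M_K^\infty) \to H_1(N_{D_\gamma}^\infty) = 0,
\]
the vanishing using $\pi_1(N_{D_\gamma}) \cong \Z$. Surjectivity of $\partial_{D_\gamma}$ together with part (1) shows that $[\widetilde \gamma]$ generates $H_1(M_K^\infty)$. The Blanchfield computation then follows by feeding Proposition \ref{prop:barCP2} (the matrix of $\lambda_{N_{D_\gamma}}$ is $(\unaryminus\Delta_K)$) into Proposition \ref{prop:presents}: under the resulting identification $H_1(M_K^\infty) \cong \Z[t^{\pm 1}]/(\Delta_K)$, the class $[\widetilde \gamma]$ corresponds to $1$ (as the image under $\partial_{D_\gamma}$ of the free generator $[\widehat{\core}(\widetilde h^2)]$), and the formula in Proposition \ref{prop:presents} yields $\Bl_K([\widetilde \gamma], [\widetilde \gamma]) = -1 \cdot (\unaryminus\Delta_K)^{-1} \cdot 1 = 1/\Delta_K$, placing $[\widetilde \gamma]$ in $\mathcal{G}_K$.

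For part (4), the class $p_*([\Sigma])$ is represented in $\cpring$ by $p(S) \cup \cocore(h^2)$, while the preferred generator $x_{\C P^2}$ is represented by $F \cup \cocore(h^2)$. Both representatives share the same $\cocore(h^2)$ piece with the same orientation, and the remaining pieces $p(S) \subset M_J$ and $F \subset E_J \subset \partial D^4$ both have boundary $\mu_c$. After pushing them into the interior of $N_\Delta \subset D^4$, their difference $p(S) - F$ is a closed 2-cycle supported in $D^4$, hence nullhomologous, giving $\phi_{D_\gamma} \circ p_*([\Sigma]) = x_{\C P^2}$. The main obstacle I anticipate throughout is the sign/orientation bookkeeping in part (2), since the sign convention in the homological definition of $\lambda^\partial$ must be carefully matched with the geometric intersection count so that Proposition \ref{prop:presents} yields $+1/\Delta_K$ rather than $-1/\Delta_K$ in part (3).
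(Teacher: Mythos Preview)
Your proposal is correct and takes essentially the same approach as the paper. One small point on part~(3): Proposition~\ref{prop:presents} as stated only asserts the \emph{existence} of an isometry, so your claim that $[\widetilde{\gamma}]$ corresponds to $1$ under ``the resulting identification'' implicitly uses how that isometry is constructed; the paper makes this explicit by recording the formula $\Bl_{\partial W}\bigl(\partial(\PD \circ \ev^{-1}(e_i^*)),\, \partial(\PD \circ \ev^{-1}(e_j^*))\bigr) \equiv -\lambda_W(e_i,e_j)^{-1}$ as a separate claim before applying it with $e_1=[\Sigma]$, which is precisely what you need and also resolves the sign worry you flag at the end.
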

\begin{proof}
We first show that $[\Sigma]$ and $[\widehat{\core}(\widetilde{h}^2)]$ are Poincar\'e dual as was already alluded to in Construction~\ref{cons:Basis}.
Since $H_2(N_{D_\gamma}^\infty)$ is freely generated by~$[\Sigma]$,  it suffices to show that evaluating~$\ev \circ \PD^{-1}([\widehat{\core}(\widetilde{h}^2)])$ and $[\Sigma]^*$  on $[\Sigma]$ leads to the same outcome.
This follows because $\Sigma=S \cup_{\widetilde{\mu}_c} \cocore(\widetilde{h}^2)$ and $\widehat{\core}(\widetilde{h}^2)=\core(\widetilde{h}^2) \cup_{\widetilde{c}} \widetilde{C}_{\gamma,f}$ are geometrically dual (the intersection point occurs at the intersection of $\cocore(\widetilde{h}^2)$ and $\core(\widetilde{h}^2)$)
and thanks to the definition of the equivariant intersection form:
$$ \langle [\Sigma]^*,[\Sigma] \rangle=1
=\lambda_{N_{D_\gamma}}^\partial([\Sigma],[\widehat{\core}(\widetilde{h}^2)])
= \langle \ev \circ \PD^{-1}([\widehat{\core}(\widetilde{h}^2)]),[\Sigma]\rangle.$$
This proves the second assertion.

The first assertion follows readily:
since~$\widehat{\core}(\widetilde{h}^2)$ freely generates~$H_2(N_{D_\gamma}^\infty,M_K^\infty) \cong \Z[t^{\pm 1}]$ and~$\partial (\widehat{\core}(\widetilde{h}^2))=\widetilde{\gamma}$, we deduce that $\partial_{D_\gamma}([\widehat{\core}(\widetilde{h}^2)])=[\widetilde{\gamma}]$;
this equality determines $\partial_{D_\gamma}$ because its domain is freely generated by $[\widehat{\core}(\widetilde{h}^2)]$.

We move on to the third assertion.
First,  the class~$[\widetilde{\gamma}]$ generates the Alexander module because~$\partial_{D_\gamma}$ is surjective and takes the generator of $H_2(N_{D_\gamma}^\infty,M_K^\infty)$ to $[\widetilde{\gamma}]$. 
In order to show that~$\Bl_K([\widetilde{\gamma}],[\widetilde{\gamma}])=\frac{1}{\Delta_K}$, we first record a more detailed formulation of Proposition~\ref{prop:presents}.
\begin{claim}
\label{claim:UsualTrick}
Let~$W$ be a~$4$-manifold with~$\pi_1(W) \cong \Z$, connected boundary,  and $\pi_1(\partial W ) \to \pi_1(W)$ surjective.
If~$H_1(\partial W^\infty)$ is $\Z[t^{\pm 1}]$-torsion, then for any~$\Z[t^{\pm 1}]$-basis~$e_1,\ldots,e_n$ of~$H_2(W;\Z[t^{\pm 1}])$ with dual basis~$e_1^*,\ldots,e_n^*$ of~$ H_2(W;\Z[t^{\pm 1}])^*$,  we have
$$\Bl_{\partial W}\left(\partial (\PD \circ \ev^{-1} (e_i^*)),\partial (\PD \circ \ev^{-1} (e_j^*))\right)\equiv \unaryminus \lambda_W(e_i,e_j)^{-1} \in \Q(t)/\Z[t^{\pm 1}],$$
 where~$\partial \colon H_2(W,\partial W;\Z[t^{\pm 1}]) \to H_1(\partial W;\Z[t^{\pm 1}])$ denotes the connecting homomorphism in the long exact sequence of the pair $(W,\partial W)$,  and the inverse is taken over $\Q(t)$.
\end{claim}

We saw in the second assertion that the isomorphism~$\PD \circ \ev^{-1}$ takes the dual class~$[\Sigma]^*$ to~$[\widehat{\core}(\widetilde{h}^2)] \in H_2(N_D,M_K;\Z[t^{\pm 1}])$.
Since~$\partial_{D_\gamma}([\widehat{\core}(\widetilde{h}^2)])=[\widetilde{\gamma}]$,  the claim now implies that
$$\Bl_K([\widetilde{\gamma}],[\widetilde{\gamma}])
=\unaryminus \lambda_{N_{D_\gamma}}([\Sigma],[\Sigma])^{-1}
=\unaryminus\left(\frac{1}{\unaryminus\Delta_K}\right)
=\frac{1}{\Delta_K} \in \Q(t)/\Z[t^{\pm 1}].
$$ 
For the penultimate equality, since~$H_2(N_{D_\gamma}^\infty) \cong \Z[t^{\pm 1}]$, Proposition~\ref{prop:barCP2} implies that $\lambda_{N_{D_\gamma}}(x,x)=\unaryminus\Delta_K$ for \emph{any} generator~$x$ of~$H_2(N_{D_\gamma}^\infty)$, e.g.  $x=[\Sigma]$.
This concludes the proof of the third assertion.

The fourth assertion essentially follows from the definitions of $[\Sigma]$ and $x_{\C P^2}$:
the composition 
$$H_2(N_{D_\gamma}^\infty) \xrightarrow{p_*} H_2(N_{D_\gamma}) \xrightarrow{\phi_{D_\gamma},\cong}  H_2(\cpring)$$
maps the surface $\Sigma=S \cup_{\widetilde{\mu}_c} \cocore(\widetilde{h}^2)$ to a surface in~$\cpring$ of the form $\cocore(h^2) \cup_{\mu_c} F$ where~$F \looparrowright E_J \subset S^3$ is an immersed surface with boundary $\mu_c$; as mentioned in Construction~\ref{cons:H2CP2},  such a closed immersed surface represents the generator~$x_{\C P^2} \in H_2(\cpring)$.
\end{proof}

The next proposition uses work of Borodzik and Friedl~\cite{BorodzikFriedlLinking} to show that elements of $\mathcal{G}_K$ can be realized by crossing change $\Z$-disks.

\begin{proposition}
\label{prop:BF}
For every knot $K$ that is $\Z$-slice in $\cpring$ and every $x \in \mathcal{G}_K$, there exists a crossing change $\Z$-disk~$D_\gamma$ with  boundary $K$ such that $[\widetilde{\gamma}]=x$.
\end{proposition}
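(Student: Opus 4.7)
The plan is to apply work of Borodzik--Friedl \cite{BorodzikFriedlLinking} not merely qualitatively (as in the proof of Theorem~\ref{thm:ExistenceIntro}) but with enough control to prescribe the homology class of the lift of the surgery curve to be any given $x \in \mathcal{G}_K$.

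First I would translate the condition $x \in \mathcal{G}_K$ into presentation data for the Blanchfield form. Since $K$ is $\Z$-slice in $\cpring$, Theorem~\ref{thm:ExistenceIntro} ensures that $\Bl_K$ is presented by $-\Delta_K$ and, in particular, that the Alexander module is cyclic. For any $x \in \mathcal{G}_K$, the fact that $x$ generates $H_1(M_K^\infty)$ and satisfies $\Bl_K(x,x) = 1/\Delta_K$ implies that the $\Z[t^{\pm 1}]$-linear map
\[
\Z[t^{\pm 1}]/(\Delta_K) \xrightarrow{\cong} H_1(M_K^\infty),\qquad 1 \mapsto x,
\]
is an isometry from the standard linking pairing $(a,b) \mapsto -a\overline{b}/(-\Delta_K) = a\overline{b}/\Delta_K$ to $\Bl_K$. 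In other words, choosing $x$ is the same as choosing a concrete presentation of $\Bl_K$ by the size-one matrix $(-\Delta_K)$.

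Next I would feed this presentation into Borodzik--Friedl's realization theorem \cite[Theorem 5.1]{BorodzikFriedlLinking}. Their argument produces from such a presentation a $(-1)$-framed unknot $\gamma \subset E_K$ with $\ell k(\gamma,K)=0$ along which a positive generalized crossing change converts $K$ into a knot $J$ with $\Delta_J \doteq 1$. The crucial extra input needed here is that the homology class $[\widetilde{\gamma}] \in H_1(M_K^\infty)$ produced by their construction coincides with the chosen generator $x$; I would verify this by tracing through the geometric realization in \cite{BorodzikFriedlLinking} and comparing with the way $x$ corresponds to a generator in the size-one presentation above. Once $\gamma$ is in hand, picking any $\Z$-disk $\Delta \subset D^4$ for $J$ (which exists by Freedman's theorem \cite{Freedman}) and applying Definition~\ref{def:CrossingChangeZdisc} gives a crossing change $\Z$-disk $D_\gamma \subset \cpring$ with $\partial D_\gamma = K$, and Proposition~\ref{prop:BelongstoG} then guarantees $[\widetilde{\gamma}] = x \in \mathcal{G}_K$.

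The main obstacle is precisely the control on $[\widetilde{\gamma}]$ just highlighted: Borodzik--Friedl's statement only ensures the existence of some surgery curve, and one needs to match their construction with the chosen presentation up to the level of lifts in $H_1(M_K^\infty)$. If a direct inspection of \cite{BorodzikFriedlLinking} turns out to be delicate, a more self-contained alternative is to use the free transitive action of the unitary unit group $U(\Z[t^{\pm 1}]/(\Delta_K))$ on $\mathcal{G}_K$ described in Section~\ref{sub:NumberTheoryIntro}: starting from any crossing change disk $D_{\gamma_0}$ (which exists by Theorem~\ref{thm:ExistenceIntro}) with class $x_0 = [\widetilde{\gamma}_0] \in \mathcal{G}_K$, the unique unitary unit $u$ with $x = u \cdot x_0$ can be used to algebraically modify $\gamma_0$ and then realize the modification geometrically via a further sequence of generalized crossing changes respecting the presentation data.
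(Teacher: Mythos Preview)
Your main line of argument is essentially the same as the paper's: both invoke Borodzik--Friedl to produce a crossing change curve $\gamma$ whose lift realizes the prescribed class $x \in \mathcal{G}_K$. The refinement you are missing is that the precise reference is \cite[Lemma~5.5]{BorodzikFriedlLinking} (not just Theorem~5.1): that lemma takes as input a generator $x$ of the Alexander module together with the value $\Bl_K(x,x)$ and outputs a based embedded disk $\mathcal{D}\subset S^3$ meeting $K$ in two points of opposite sign with $[\widetilde{\partial\mathcal{D}}]=x$; the argument of \cite[Proof of Theorem~5.1]{BorodzikFriedlLinking} then shows the resulting crossing change yields an Alexander polynomial one knot. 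So the ``tracing through'' you flag as an obstacle is already packaged in that lemma, and the paper's proof is largely a matter of matching sign and normalization conventions (their $\Delta_K(1)=1$ versus our $\Delta_K(1)=-1$, and the sign in the presentation of $\Bl_K$).

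Your proposed alternative via the transitive $U(\Z[t^{\pm 1}]/(\Delta_K))$-action is not viable as written: that action is purely algebraic on $\mathcal{G}_K$, and ``algebraically modifying $\gamma_0$ and then realizing the modification geometrically'' is precisely the statement you are trying to prove. Without an independent geometric realization result (which is exactly what \cite[Lemma~5.5]{BorodzikFriedlLinking} supplies), this route is circular.
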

\begin{proof}
Work of Borodzik and Friedl implies that for every $x \in \mathcal{G}_K$,  there exists an embedded disk~$\mathcal{D} \subset S^3$ that intersects $K$ in two transverse intersections of opposite signs,  such that the lift of $\gamma:=\partial \mathcal{D}$ to the infinite cyclic cover satisfies $[\widetilde{\gamma}]=x$ and such that a positive crossing change using~$\gamma$ leads to an Alexander polynomial one knot~\cite[Lemma 5.5 and Proof of Theorem~5.1]{BorodzikFriedlLinking}.
In particular after choosing an ambient isotopy~$(f_t)_{t \in [0,1]}$ of~$S^3$ with $f_1(\gamma)=c$, we see that every~$x \in \mathcal{G}_K$ leads to a crossing change $\Z$-disk with~$[\widetilde{\gamma}]=x$.

We include some more details on how to apply~\cite{BorodzikFriedlLinking}.
First, note that Borodzik-Friedl work with a representative of the Alexander polynomial that evaluates to $1$ at $t=1$~\cite[Example~2.3]{BorodzikFriedlLinking} whereas for us, $\Delta_K(1)=-1$.
Next, we note that in~\cite[Section 1.3]{BorodzikFriedlLinking}, the authors use the opposite sign convention than ours in their definition of a matrix presentating a linking form: contrarily to~\eqref{eq:presents}, no minus sign appears.

We now give the argument proper. 
Proposition~\ref{prop:BelongstoG} ensures that $\Bl_K(x,x)=\tmfrac{1}{\Delta_K}=\tmfrac{\unaryminus1}{\unaryminus\Delta_K}$.
We apply~\cite[Lemma 5.5]{BorodzikFriedlLinking} (in their notation, we take~$n=1$ and~$p_{11}(t)=\unaryminus1$):
the outcome is a based embedded disk $\mathcal{D} \subset S^3$ that intersects~$K$ in two transverse intersections of opposite signs and whose boundary $\gamma=\partial \mathcal{D}$ lifts to a loop that represents $x$ and has equivariant self-linking~$\frac{\unaryminus1}{\unaryminus\Delta_K}=\tmfrac{1}{\Delta_K} \in \Q(t)$.
The argument in~\cite[Proof of Theorem~5.1]{BorodzikFriedlLinking}  shows that the knot obtained by the resulting positive crossing change has Alexander polynomial one.
In their notation we take $A(t)$ the size one matrix $(\Delta_K)$ (as this presents the Blanchfield pairing with their sign conventions) and $\varepsilon_1=A_{11}(1)=\Delta_K(1)=-1$.
\end{proof}

The next remark describes the relation between the homology of $N_D^\infty$ and the homology of $N_D$.
For this, we write $\Z_{\aug}$ for the $\Z[t^{\pm 1}]$-module whose underlying abelian group is $\Z$ and where the action is by $p(t) \cdot n:=p(1)n$ for $p(t) \in \Z[t^{\pm 1}]$ and $n \in \Z$.
In other words $\Z_{\aug}$ has the~$\Z[t^{\pm 1}]$-module induced by the augmentation map $\aug \colon \Z[t^{\pm 1}] \to \Z,p(t) \mapsto p(1).$

\begin{remark}
\label{rem:BF}
Given an infinite cover $p \colon X^\infty \to X$ associated to an epimorphism $\varphi \colon \pi_1(X) \twoheadrightarrow \Z$,  it is customary to identify $H_*(X;\Z_{\aug}):=H_*(X;\Z[t^{\pm 1}] \otimes_{\Z[t^{\pm 1}]} \Z_{\aug})$ with $H_*(X)$.
This isomorphism is induced by the projection $p$ and, in particular,  the following diagram commutes:
$$
\xymatrix@R0.5cm{
H_k(X^\infty)  \ar[rr]^{p_*} \ar[d]^=&& H_k(X) \\
H_k(X;\Z[t^{\pm 1}]) \ar[rr]^-{[\sigma] \mapsto [\sigma \otimes 1]} \ar[rd]^-{[\sigma] \mapsto [\sigma] \otimes 1} && H_k(X;\Z_{\aug})
 \ar[u]^\cong_{p_*} \\
 &H_k(X;\Z[t^{\pm 1}]) \otimes_{\Z[t^{\pm 1}]} \Z_{\aug}. \ar[ur]_-{\psi}&
}
$$
The map labelled $\psi$ maps $[\sigma] \otimes n$ to $[\sigma \otimes n]$ and its kernel and cokernel can be analysed using the universal coefficient spectral sequence.
When $X=N_D$ is a $\Z$-disk exterior and $k=2$,
this map is an isomorphism~\cite[proof of Lemma 5.10]{ConwayPowell}.

The upshot of this discussion is that will frequently use the identification 
$$p_* \circ \psi \colon  H_2(N_D;\Z[t^{\pm 1}]) \otimes_{\Z[t^{\pm 1}]} \Z_{\aug}  \xrightarrow{\cong} H_2(N_D)$$
and, given $x \in H_*(N_D^\infty)=H_k(N_D;\Z[t^{\pm 1}]) $, we will identify $x \otimes 1 \in H_2(N_D;\Z[t^{\pm 1}]) \otimes_{\Z[t^{\pm 1}]} \Z_{\aug}$ with $p_*(x) \in H_2(N_D)$.

For example,  using these identifications, the fourth item of Proposition~\ref{prop:BelongstoG} states that 
$$\phi_{D_\gamma}([\Sigma] \otimes 1)=x_{\C P^2}.$$
\end{remark}

\section{The classification of $\Z$-disks}
\label{sec:ClassifyCrossingChange}

The goal of this section is to prove Theorem~\ref{thm:DiscsWithoutCPP22Intro} from the introduction.
After introducing some notation, we extract some results from~\cite{ConwayPowell} that provide the main ingredients for the proof. 

\begin{notation}
In what follows,  given a $\Z$-disk $D \subset  \cpring$ with boundary a knot $K$, we use the composition $\ev \circ \PD^{-1}$ of the Poincar\'e duality isomorphism with evaluation to identify~$H_2(N_D,M_K;\Z[t^{\pm 1}])$ with
$$H_2(N_D;\Z[t^{\pm 1}])^*:= \overline{\Hom_{\Z[t^{\pm 1}]}(H_2(N_D;\Z[t^{\pm 1}]),\Z[t^{\pm 1}])}.$$
The fact that $\ev$ is an isomorphism can be seen using the universal coefficient spectral sequence; we refer to~\cite[proof of Lemma 3.2]{ConwayPowell} for the details.

Given a basis~$e$ of~$H_2(N_D;\Z[t^{\pm 1}]) \cong \Z[t^{\pm 1}]$, we write~$e^* \in H_2(N_D;\Z[t^{\pm 1}])^*$ for the dual basis.
From now on, we use $\ev \circ \PD^{-1}$ to view $e^*$ as an element of $H_2(N_D,M_K;\Z[t^{\pm 1}])$.
We also write
$$\phi_D \colon H_2(N_D) \to H_2(\cpring)$$
 for the inclusion induced map.
 Continuing with the notation from Remark~\ref{rem:BF},  we note that any~$\sigma \in H_2(N_D;\Z[t^{\pm 1}])$ determines an element $\phi_D(\sigma \otimes 1) \in H_2(N_D).$
 \end{notation}

The next proposition uses results from~\cite{ConwayPowell} and~\cite{OrsonPowell} to formulate a criterion ensuring that two~$\Z$-disks in $\cpring$ are isotopic rel.\ boundary.

\begin{proposition}
\label{prop:CP20BasisVersion}
Assume that~$D_0,D_1 \subset \cpring$ are~$\Z$-disks with boundary a knot~$K$.
The following assertions are equivalent:
\begin{enumerate}[(i)]
\item \label{item:TFAEIsotopic} 
the disks~$D_0$ and~$D_1$ are isotopic rel.\ boundary;
\item \label{item:TFAEGenerator} 
there are generators~$e_0 \in H_2(N_{D_0}^\infty)$ and~$e_1 \in H_2(N_{D_1}^\infty)$ such that for some $k \in \Z$
\begin{align*}
&\phi_{D_0}(e_0 \otimes 1)=\phi_{D_1}(e_1 \otimes 1) \in H_2(\C P^2), \\
&t^k\partial_{D_0}(e_0^*)=\partial_{D_1}(e_1^*) \in H_1(E_K^\infty).
\end{align*}
\end{enumerate}
\end{proposition}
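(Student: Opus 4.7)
To prove this equivalence, I would establish $(i) \Rightarrow (ii)$ by a naturality argument and $(ii) \Rightarrow (i)$ using the classification machinery of Conway--Piccirillo--Powell together with the isotopy-upgrading theorem of Orson--Powell.

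For $(i) \Rightarrow (ii)$, suppose $(\Phi_s)_{s \in [0,1]}$ is an ambient isotopy of $\cpring$ rel.\ boundary with $\Phi_1(D_0) = D_1$. Then $\Phi_1$ restricts to a homeomorphism $N_{D_0} \to N_{D_1}$ which is the identity on $E_K \subset \partial \cpring \cap \partial N_{D_i}$. Fix compatible basepoint lifts and choose any lift $\tilde{\Phi}_1 \colon N_{D_0}^\infty \to N_{D_1}^\infty$. Its restriction to $E_K^\infty$ is a lift of $\id_{E_K}$, hence a deck transformation acting as multiplication by some $t^k$; via the isomorphism $H_1(E_K^\infty) \cong H_1(M_K^\infty)$ of Remark~\ref{rem:AlexanderModule}, it follows that $(\tilde{\Phi}_1|_{M_K^\infty})_*$ acts as multiplication by $t^k$ on $H_1(M_K^\infty)$. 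Pick any generator $e_0 \in H_2(N_{D_0}^\infty)$ and set $e_1 := (\tilde{\Phi}_1)_*(e_0)$, which is a generator of $H_2(N_{D_1}^\infty)$. The first equality in (ii) follows from naturality of the inclusion-induced maps $\phi_{D_i}$ together with the fact that $\Phi_1$ is isotopic to $\id$ on $\cpring$. For the second equality, naturality of Poincar\'e--Lefschetz duality yields $(\tilde{\Phi}_1)_*(e_0^*) = e_1^*$ as elements of relative homology, and naturality of the connecting homomorphism $\partial$ combined with the action $(\tilde{\Phi}_1|_{M_K^\infty})_* = t^k$ gives $\partial_{D_1}(e_1^*) = t^k\, \partial_{D_0}(e_0^*)$.

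For the substantive direction $(ii) \Rightarrow (i)$, I would package the data of (ii) as an isometry of equivariant intersection forms. Define $f \colon H_2(N_{D_0}^\infty) \to H_2(N_{D_1}^\infty)$ by $f(e_0) = t^k e_1$. Proposition~\ref{prop:barCP2} asserts that both $\lambda_{N_{D_0}}$ and $\lambda_{N_{D_1}}$ are represented by $(\unaryminus\Delta_K)$ in the bases $e_0$ and $e_1$, so since $t^k \overline{t^k} = 1$ in $\Z[t^{\pm 1}]$, the map $f$ is an isometry. Under the induced dual map through $\ev \circ \PD^{-1}$, the second condition in (ii) combined with Claim~\ref{claim:UsualTrick} (which expresses $\Bl_K$ via the inverse of $\lambda$ through $\partial$) translates to the statement that $f$ induces the identity on the Blanchfield form of $M_K$. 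This is precisely the automorphism-invariant data used in \cite[Section~5]{ConwayPiccirilloPowell} to produce a rel.-boundary homeomorphism $N_{D_0} \to N_{D_1}$. The first condition in (ii), asserting agreement of the classes $\phi_{D_i}(e_i \otimes 1) \in H_2(\cpring)$, supplies the orientation-and-framing compatibility needed to extend this homeomorphism across the (trivial) normal disk bundles $\nu(D_i)$ to a rel.-boundary homeomorphism $\Psi \colon \cpring \to \cpring$ taking $D_0$ to $D_1$. Finally, \cite[Theorem~C]{OrsonPowell} upgrades this equivalence rel.\ boundary to an isotopy rel.\ boundary.

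The main obstacle will lie in carefully tracking the $t^k$ factor and the sesquilinearity conventions in both directions. In particular, translating the algebraic condition $t^k \partial_{D_0}(e_0^*) = \partial_{D_1}(e_1^*)$ into the statement that $f$ induces the identity on $\Bl_K$ requires a careful unwinding of Claim~\ref{claim:UsualTrick} in the presence of two distinct disk exteriors with the same boundary. A secondary subtlety is that \cite{ConwayPiccirilloPowell} is phrased in terms of equivalence classes of disks, so one must trace through the relevant arguments there to verify that the output homeomorphism satisfies the hypotheses needed to invoke \cite[Theorem~C]{OrsonPowell}; this is the same caveat raised in Section~\ref{sub:Context}, and its resolution in our cyclic-intersection-form setting should be substantially cleaner than in the general case.
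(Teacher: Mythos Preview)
Your approach matches the paper's, but you have misplaced the role of the first condition in (ii). The paper feeds the isometry $F(e_0)=t^{-k}e_1$ and the relation $\partial_{D_1}\circ(F^*)^{-1}=\partial_{D_0}$ (which follows directly from the second condition in (ii), without any appeal to Claim~\ref{claim:UsualTrick}) into \cite[Theorem~1.3]{ConwayPowell}---not \cite{ConwayPiccirilloPowell}---and that theorem already outputs a rel.\ boundary homeomorphism $\Phi\colon(\cpring,D_0)\to(\cpring,D_1)$ of \emph{pairs}; no separate extension over normal bundles is required, and the first condition plays no role in producing $\Phi$. The first condition is instead precisely what resolves the ``subtlety'' you flag at the end: via \cite[Lemma~5.10]{ConwayPowell} the induced automorphism $F_\Z$ of $H_2(\cpring)$ satisfies $F_\Z\circ\phi_{D_0}=\phi_{D_1}\circ(F\otimes\id)$, so one computes $F_\Z(\phi_{D_0}(e_0\otimes 1))=\phi_{D_1}(t^{-k}e_1\otimes 1)=\phi_{D_1}(e_1\otimes 1)=\phi_{D_0}(e_0\otimes 1)$, whence $\Phi_*=F_\Z=\id$ and \cite[Corollary~C]{OrsonPowell} applies. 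In short, the ingredient you assigned to a gluing step is exactly the one that closes your acknowledged gap.
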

\begin{proof}
We start with a claim that we will be using again later on.
\begin{claim}
\label{claim:IsotopicToIdOrNot}
Assume that~$\Phi \colon (\cpring,D_0) \to (\cpring,D_1)$ is a homeomorphism rel.\  boundary.
For any generators $x_0 \in H_2(N_{D_0}^\infty)$ and $x_1 \in H_2(N_{D_1}^\infty)$, there is a sign $\varepsilon=\pm 1$ so that 
\begin{align*}
&\phi_{D_0}(x_0 \otimes 1)=\operatorname{sign}(\Phi)  \varepsilon \phi_{D_1}(x_1 \otimes 1), \\
& \partial_{D_0}(x_0^*)=\varepsilon t^k \partial_{D_1}(x_1^*)
\end{align*}
where $\operatorname{sign}(\Phi)=1$ if $\Phi \colon \C P^2 \to \C P^2$ induces the identity on $H_2$ and $\operatorname{sign}(\Phi)=-1$ otherwise.
\end{claim}
\begin{proof}[Proof of Claim~\ref{claim:IsotopicToIdOrNot}]
Restricting~$\Phi$ to the disk exteriors, lifting the outcome to the covers, and taking the induced map~$\widetilde{\Phi}_*$ on second homology, we obtain a~$\Z[t^{\pm 1}]$-isomorphism.
As these modules are free of rank one, for any generators~$x_0 \in H_2(N_{D_0}^\infty)$ and~$x_1 \in H_2(N_{D_1}^\infty)$, there is a~$\varepsilon=\pm 1$ and a~$k \in \Z$ so that
$$ \widetilde{\Phi}_*(x_0)=\varepsilon t^kx_1.$$
Apply the projection induced map~$(p_1)_* \colon H_2(N_{D_1}^\infty) \to H_2(N_{D_1})$ to obtain~$\Phi \circ (p_0)_*(x_0)=p_1(\varepsilon t^k x_1).$
Recalling the notation from Remark~\ref{rem:BF} we rewrite this as $\Phi(x_0 \otimes 1)=\varepsilon t^k x_1 \otimes 1.$ 
By definition of the augmentation map, this can be rewritten as $\Phi(x_0 \otimes 1)=\varepsilon x_1 \otimes 1.$ 
Apply $\phi_{D_1}$ to obtain the first claimed equality:
\begin{equation*}
\varepsilon \phi_{D_1}(x_1 \otimes 1) =\phi_{D_1} \circ \Phi(x_0 \otimes 1)= \operatorname{sign}(\Phi)  \phi_{D_0}(x_0 \otimes 1).
\end{equation*}
For the second equality, since $\Phi$ restricts to the identity on the boundary,  we have
\begin{equation*}
 \partial_{D_0}(x_0^*)
=\partial_{D_1} \circ \widetilde{\Phi}_*(x_0^*)
=\varepsilon t^k \partial_{D_1}(x_1^*).
\end{equation*}
This concludes the proof of Claim~\ref{claim:IsotopicToIdOrNot}.
\end{proof}

We now prove the~$\eqref{item:TFAEIsotopic}\Rightarrow\eqref{item:TFAEGenerator}$ direction of the proposition.
Assume that~$\Phi \colon (\cpring,D_0) \to (\cpring,D_1)$ is an ambient isotopy rel.\  boundary and pick generators~$x_0 \in H_2(N_{D_0}^\infty)$ and $x_1 \in H_2(N_{D_1}^\infty)$.
Since~$\Phi \colon \cpring \to \cpring$ induces the identity on $H_2$,
Claim~\ref{claim:IsotopicToIdOrNot} states that there is a sign $\varepsilon=\pm 1$ so that 
\begin{align*}
&\phi_{D_0}(x_0 \otimes 1)=\varepsilon \phi_{D_1}(x_1 \otimes 1), \\
& \partial_{D_0}(x_0^*)=\varepsilon t^k \partial_{D_1}(x_1^*).
\end{align*}
It follows that the generators~$e_0:=x_0$ and $e_1:=\varepsilon x_1$ satisfy the relations stated in~\eqref{item:TFAEGenerator}.

Next we prove the converse, namely we assume that there are~$\Z[t^{\pm 1}]$-generators~$e_0 \in H_2(N_{D_0}^\infty)$ and~$e_1 \in H_2(N_{D_1}^\infty)$ as in~\eqref{item:TFAEGenerator} and prove that the disks are isotopic rel.\  boundary.
The idea is as follows: the first equation in~\eqref{item:TFAEGenerator} is used to show that the disks are equivalent rel.\ boundary; the second condition is used to upgrade this equivalence to an isotopy.

Define an isomorphism $F \colon H_2(N_{D_0};\Z[t^{\pm 1}])  \to H_2(N_{D_1};\Z[t^{\pm 1}])$ by setting~$F(e_0):=t^{-k}e_1$ and extending $\Z[t^{\pm 1}]$-linearly.
Note that~$F$ is automatically an isometry because
\begin{align*}
& \lambda_{N_{D_0}}(e_0,e_0)=\unaryminus\Delta_K,\\
&  \lambda_{N_{D_1}}(F(e_0),F(e_0)) 
= \lambda_{N_{D_1}}(t^{-k}e_1,t^{-k}e_1)
= \lambda_{N_{D_1}}(e_1,e_1)
=\unaryminus\Delta_K.
\end{align*}
Here we used that~$\lambda_{N_{D_i}}(x,x)=\unaryminus\Delta_K$ for any generator~$x \in H_2(N_{D_i};\Z[t^{\pm 1}]) \cong \Z[t^{\pm 1}]$; this is a consequence of Lemma~\ref{lem:DiscsGeneral}.

Using the identification~$H_2(N_{D_i};\Z[t^{\pm 1}])^* \cong H_2(N_{D_i},M_K;\Z[t^{\pm 1}])$ for~$i=0,1$,  we think of the dual isomorphism as a map~$F^* \colon H_2(N_{D_1},M_K;\Z[t^{\pm 1}]) \to H_2(N_{D_0},M_K;\Z[t^{\pm 1}])$.
Applying the connecting homomorphism $\partial_{D_1}$ to the equality $(F^*)^{-1}(e_0^*)=t^{-k}e_1^*$,  we obtain
$$\partial_{D_1} \circ (F^*)^{-1}(e_0^*)=t^{-k}\partial_{D_1}(e_1^*)
=\partial_{D_0}(e_0^*).$$
In the second equality,  we used our assumption that $t^k\partial_{D_0}(e_0^*)=\partial_{D_1}(e_1^*) $.

Since $F$ is an isometry that satisfies $\partial_{D_1} \circ (F^*)^{-1}=\partial_{D_0}$, we can apply~\cite[Theorem~1.3]{ConwayPowell} to deduce that there is an equivalence rel.\  boundary~$\Phi \colon (\cpring,D_0) \cong (\cpring,D_1)$ realizing~$F$.

It remains to verify that the homeomorphism~$\Phi $ is isotopic to the identity.
As shown by~\cite[Lemma~5.10]{ConwayPowell},  any isometry $G \colon \lambda_{N_{D_0}} \cong  \lambda_{N_{D_1}}$ induces a self-isomorphism~$G_\Z$ on $H_2(\cpring)$ that satisfies $G_\Z \circ \phi_{D_0}=\phi_{D_1}\circ (G \otimes_{\Z[t^{\pm 1}]} \id_{\Z_{\aug}})$.
The same reference states that if $G$ is induced by a homeomorphism~$\Psi \colon (\cpring,D_0) \xrightarrow{\cong} (\cpring,D_1)$, then $\Psi_*=G_\Z$.

Returning to our setting, we endow~$H_2(\cpring)$ with the basis~$\phi_{D_0}(e_0 \otimes 1)$ and note that
$$F_\Z \circ \phi_{D_0}(e_0 \otimes 1)
=\phi_{D_1} \circ (F \otimes_{\Z[t^{\pm 1}]} \id_{\Z_{\aug}})(e_0 \otimes 1)
=\phi_{D_1}(t^{-k}e_1 \otimes 1)
=\phi_{D_1}(e_1 \otimes 1)
=\phi_{D_0}(e_0 \otimes 1).$$
In the last equality we used the assumption from~\eqref{item:TFAEGenerator}. 
This calculation implies that the rel.\ boundary homeomorphism~$\Phi \colon \cpring \to \cpring$ induces the identity on $H_2$.
Work of Orson and Powell~\cite[Corollary C]{OrsonPowell} now implies that $\Phi$ is isotopic rel.\ boundary to the identity.
This concludes the proof that $D_0$ and $D_1$ are isotopic rel.\ boundary.
\end{proof}

The next result applies the criterion from Proposition~\ref{prop:CP20BasisVersion} to generalized crossing change $\Z$-disks.

\begin{proposition}
\label{prop:CharacCrossingChangeDisc}
Assume that~$D_0,D_1 \subset \cpring$ are generalized crossing change~$\Z$-disks for a knot~$K$, with respective surgery curves~$\gamma_0,\gamma_1 \subset E_K$.
Let~$\widetilde{\gamma}_0,\widetilde{\gamma}_1$ be the lifts of~$\gamma_0,\gamma_1$ to the infinite cyclic cover.
The following assertions are equivalent:
\begin{enumerate}[(i)]
\item the disks~$D_0$ and~$D_1$ are isotopic rel.\ boundary;
\item \label{item:TFAELifts} 
 the homology classes~$[\widetilde{\gamma}_0],[\widetilde{\gamma}_1] \in \mathcal{G}_K \subset H_1(E_K^\infty)$  satisfy~$[\widetilde{\gamma}_0]=t^k [\widetilde{\gamma}_1]$ for some $k \in \Z$.
\end{enumerate}
\end{proposition}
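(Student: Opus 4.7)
The plan is to apply Proposition~\ref{prop:CP20BasisVersion} (and its sharper precursor Claim~\ref{claim:IsotopicToIdOrNot}) using the distinguished generators $e_i := [\Sigma_i] \in H_2(N_{D_i}^\infty)$ supplied by Construction~\ref{cons:Basis}. The central observation is that, for generalized crossing change disks, Proposition~\ref{prop:BelongstoG} translates the two homological conditions appearing in Proposition~\ref{prop:CP20BasisVersion} directly into statements about the preferred generator $x_{\C P^2} \in H_2(\cpring)$ and the homology class $[\widetilde{\gamma}_i] \in H_1(E_K^\infty)$ of the lifted surgery curve. No further topological work is required; the proof is essentially a bookkeeping exercise that routes everything through these identifications.

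For the direction $(i)\Rightarrow(ii)$, I would suppose $\Phi$ is an ambient isotopy of $\cpring$, rel.\ boundary, with $\Phi(D_0) = D_1$. Since such a $\Phi$ is isotopic to the identity on $\cpring$, it acts as the identity on $H_2(\cpring)$, so in the notation of Claim~\ref{claim:IsotopicToIdOrNot} we have $\operatorname{sign}(\Phi) = 1$. Applying Claim~\ref{claim:IsotopicToIdOrNot} to the generators $[\Sigma_0]$ and $[\Sigma_1]$ yields a sign $\varepsilon \in \{\pm 1\}$ and an integer $k$ satisfying
\begin{align*}
\phi_{D_0}([\Sigma_0] \otimes 1) &= \varepsilon\, \phi_{D_1}([\Sigma_1] \otimes 1), \\
\partial_{D_0}([\Sigma_0]^*) &= \varepsilon\, t^k\, \partial_{D_1}([\Sigma_1]^*).
\end{align*}
By Proposition~\ref{prop:BelongstoG}(4) both sides of the first equation equal $x_{\C P^2}$, which forces $\varepsilon = +1$. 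By Proposition~\ref{prop:BelongstoG}(1) and (2) we have $\partial_{D_i}([\Sigma_i]^*) = [\widetilde{\gamma}_i]$, so the second equation reduces to $[\widetilde{\gamma}_0] = t^k [\widetilde{\gamma}_1]$, as desired.

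For the converse $(ii)\Rightarrow(i)$, I plan to verify the two conditions of Proposition~\ref{prop:CP20BasisVersion} with the same choice $e_i := [\Sigma_i]$. The first condition $\phi_{D_0}([\Sigma_0] \otimes 1) = \phi_{D_1}([\Sigma_1] \otimes 1)$ holds on the nose, since both sides equal $x_{\C P^2}$ by Proposition~\ref{prop:BelongstoG}(4). The second condition, under the identifications $\partial_{D_i}([\Sigma_i]^*) = [\widetilde{\gamma}_i]$ from Proposition~\ref{prop:BelongstoG}(1)--(2), reads $t^{k'}[\widetilde{\gamma}_0] = [\widetilde{\gamma}_1]$, which is precisely the hypothesis with $k' = -k$. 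Proposition~\ref{prop:CP20BasisVersion} then concludes that $D_0$ and $D_1$ are isotopic rel.\ boundary. The fact that $[\widetilde{\gamma}_i] \in \mathcal{G}_K$ is already built into Proposition~\ref{prop:BelongstoG}(3), so no additional verification is needed.

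I do not anticipate serious technical obstacles in this proposition itself; the heavy lifting has already been done in Proposition~\ref{prop:CP20BasisVersion}, which in turn depends on the equivalence-to-isotopy upgrade of Orson--Powell and the classification of~$\Z$-disks up to equivalence from~\cite{ConwayPowell}. The only subtle point is ensuring that the first equation in Claim~\ref{claim:IsotopicToIdOrNot} genuinely pins down the sign $\varepsilon = +1$; this is guaranteed because the preferred class $[\Sigma_i]$ projects to~$x_{\C P^2}$ (not to $-x_{\C P^2}$), which is the content of Proposition~\ref{prop:BelongstoG}(4).
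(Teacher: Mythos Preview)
Your proposal is correct and follows essentially the same approach as the paper: both arguments set $e_i = [\Sigma_i]$, invoke Proposition~\ref{prop:BelongstoG} to identify $\phi_{D_i}(e_i \otimes 1) = x_{\C P^2}$ and $\partial_{D_i}(e_i^*) = [\widetilde{\gamma}_i]$, then feed these into Proposition~\ref{prop:CP20BasisVersion} and Claim~\ref{claim:IsotopicToIdOrNot} in exactly the way you describe.
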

\begin{proof}
For $i=0,1,$  consider the generator~$e_i:=[\Sigma_i]$ for $H_2(N_{D_i}^\infty)$ from Construction~\ref{cons:Basis} and recall from Proposition~\ref{prop:BelongstoG} that this generator satisfies~$\partial_{D_i}(e_i^*)=[\widetilde{\gamma}_i]$.
We also noted in Remark~\ref{rem:BF} that $\phi_{D_i}(e_i \otimes 1)=x_{\C P^2}$, the preferred homology class $x_{\C P^2} \in H_2(\cpring)$ from Construction~\ref{cons:H2CP2}.

We can now prove the (ii) $\Rightarrow$ (i) direction: we have $\phi_{D_0}(e_0 \otimes 1)=x_{\C P^2}=\phi_{D_1}(e_1 \otimes 1)$ and~$\partial_{D_0}(e_0^*)=[\widetilde{\gamma}_0]=t^k [\widetilde{\gamma}_1]=t^k \partial_{D_1}(e_1^*)$ and therefore Proposition~\ref{prop:CP20BasisVersion} implies that $D_0$ and $D_1$ are isotopic rel.\ boundary.

Next we consider the (i) $\Rightarrow$ (ii) direction. 
Assume the disks are isotopic rel.\ boundary via a homeomorphism~$\Phi \colon (\cpring,D_0) \xrightarrow{\cong} (\cpring,D_1)$.
Claim~\ref{claim:IsotopicToIdOrNot}
shows that for any generators $x_0 \in H_2(N_{D_0}^\infty)$ and $x_1 \in H_2(N_{D_1}^\infty)$, there is a sign $\varepsilon=\pm 1$ so that 
\begin{align*}
&\varepsilon \phi_{D_1}(x_1 \otimes 1)= \phi_{D_0}(x_0 \otimes 1),  \\
& \partial_{D_0}(x_0^*)=\varepsilon t^k \partial_{D_1}(x_1^*).
\end{align*}
Since $\phi_{D_0}(e_0 \otimes 1)=x_{\C P^2}=\phi_{D_1}(e_1 \otimes 1)$,  if we apply these equalities to $x_0=e_0$ and $x_1=e_1$, we deduce that $\varepsilon=1$.
It follows that 
$
[\widetilde{\gamma}_0] 
=\partial_{D_0}(e_0^*)
=t^k \partial_{D_1}(e_1^*)
= t^k[\widetilde{\gamma}_1],
$
as required.
This concludes the proof of the proposition.
\end{proof}

We now prove the first main result of this section.

\begin{theorem}
\label{thm:GeneralisedCrossingChange}
Let $K$ be a knot.
Every~$\Z$-disk in~$\cpring$ with boundary $K$ is isotopic rel.\ boundary to a crossing change~$\Z$-disk $D_\gamma$.
\end{theorem}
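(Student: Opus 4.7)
The plan is to extract a class in $\mathcal{G}_K$ from an arbitrary $\Z$-disk $D \subset \cpring$ with $\partial D = K$, realize that class by a generalized crossing change disk $D_\gamma$ via Proposition~\ref{prop:BF}, and then invoke Proposition~\ref{prop:CP20BasisVersion} to conclude $D$ and $D_\gamma$ are isotopic rel.\ boundary.

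First, fix a generator $e \in H_2(N_D^\infty) \cong \Z[t^{\pm 1}]$. By Lemma~\ref{lem:DiscsGeneral}(2) and Proposition~\ref{prop:barCP2}, the self-intersection of $\phi_D(e \otimes 1) \in H_2(\cpring)$ equals $\lambda_{N_D}(e,e)|_{t=1} = -\Delta_K(1) = 1$. Since $H_2(\cpring) \cong \Z$ carries the intersection form $(1)$, this forces $\phi_D(e \otimes 1) = \pm x_{\C P^2}$, and after possibly replacing $e$ by $-e$ I may assume $\phi_D(e \otimes 1) = x_{\C P^2}$, the preferred generator from Construction~\ref{cons:H2CP2}.

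Set $x := \partial_D(e^*) \in H_1(M_K^\infty)$; I claim $x \in \mathcal{G}_K$. Since $\pi_1(N_D) \cong \Z$, the infinite cyclic cover $N_D^\infty$ is simply connected and $H_1(N_D^\infty) = 0$, so $\partial_D$ is surjective; as $H_2(N_D,M_K;\Z[t^{\pm 1}])$ is freely generated by $e^*$, the class $x$ generates the Alexander module. By Proposition~\ref{prop:barCP2}, $\lambda_{N_D}(e,e) = -\Delta_K$, so Claim~\ref{claim:UsualTrick} yields
\[
\Bl_K(x, x) = -\lambda_{N_D}(e, e)^{-1} = 1/\Delta_K,
\]
confirming $x \in \mathcal{G}_K$.

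Proposition~\ref{prop:BF} now produces a generalized crossing change $\Z$-disk $D_\gamma \subset \cpring$ with $\partial D_\gamma = K$ and $[\widetilde{\gamma}] = x$. Let $[\Sigma] \in H_2(N_{D_\gamma}^\infty)$ be the preferred generator from Construction~\ref{cons:Basis}; by Proposition~\ref{prop:BelongstoG}, parts~(1),~(2),~(4), we have $\phi_{D_\gamma}([\Sigma] \otimes 1) = x_{\C P^2} = \phi_D(e \otimes 1)$ and $\partial_{D_\gamma}([\Sigma]^*) = [\widetilde{\gamma}] = x = \partial_D(e^*)$. Both hypotheses of Proposition~\ref{prop:CP20BasisVersion} are thus satisfied with $k = 0$, and hence $D$ and $D_\gamma$ are isotopic rel.\ boundary. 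The main substantive input is Proposition~\ref{prop:BF} (the Borodzik--Friedl geometric realization result), while the rest of the argument is bookkeeping via the basis calculations of Proposition~\ref{prop:BelongstoG} and the isotopy criterion of Proposition~\ref{prop:CP20BasisVersion}.
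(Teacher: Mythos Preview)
Your proof is correct and follows essentially the same approach as the paper: extract a class $x \in \mathcal{G}_K$ from the connecting homomorphism applied to a dual generator, realize it geometrically via Proposition~\ref{prop:BF}, and match up generators to apply Proposition~\ref{prop:CP20BasisVersion}. The only minor difference is in justifying that $\phi_D(e \otimes 1)$ is a generator of $H_2(\cpring)$: the paper uses the Milnor exact sequence (associated to $0 \to \Z[t^{\pm 1}] \xrightarrow{t-1} \Z[t^{\pm 1}] \to \Z_{\aug} \to 0$) together with $H_1(N_D^\infty)=0$ to see that $p_*$ is surjective, whereas you compute the self-intersection directly as $-\Delta_K(1)=1$ and deduce it from the form on $H_2(\cpring)$.
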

\begin{proof}
Let~$D \subset \cpring$ be a~$\Z$-disk with boundary~$K$.
We are going to prove that $D$ is isotopic rel.\ boundary to a crossing change $\Z$-disk.
Fix a generator $y \in H_2(N_D^\infty)$.
The idea of the proof is that the work of Borodzik-Friedl~\cite{BorodzikFriedlLinking} (as recast in Proposition~\ref{prop:BF}) implies that the projection of a loop representing~$\partial_D([y^*]) \in H_1(M_K^\infty) \cong H_1(E_K^\infty)$ will give the required surgery curve.

Use $p \colon  N_{D^\infty} \to N_D$  to denote the covering map and consider the portion
\begin{equation}
\label{eq:Milnor}
\cdots \to H_2(N_D;\Z[t^{\pm 1}]) \xrightarrow{\cdot (t-1)} H_2(N_D;\Z[t^{\pm 1}])  \xrightarrow{p_*}  H_2(N_D) \to H_1(N_D;\Z[t^{\pm 1}]) \to \cdots
\end{equation}
of the exact sequence induced by the short exact sequence of coefficients
$$0 \to \Z[t^{\pm 1}] \xrightarrow{\cdot (t-1)} \Z[t^{\pm 1 }] \to \Z_{\aug}  \to 0.$$
In the third term of~\eqref{eq:Milnor},  we identified~$H_2(N_D)$ with~$H_2(N_D;\Z_{\aug})$ as explained in Remark~\ref{rem:BF}. 
This sequence is sometimes called the \emph{Milnor exact sequence} because of its use in~\cite{MilnorInfiniteCyclic}.

Since~$H_1(N_D;\Z[t^{\pm 1}]) =H_1(N_D^\infty)=0$,  the projection induced map~$p_* \colon H_2(N_D^\infty) \to H_2(N_D)$ is surjective and therefore~$p_*(y) \in H_2(N_D)\cong \Z$ is a generator.
Since~$\phi_D \colon H_2(N_D) \to H_2(\cpring)$ is an isomorphism, it follows that~$\phi_D(p_*(y))=\pm x_{\C P^2} \in H_2(\cpring) \cong \Z$.
Picking the negative of~$y$ if necessary,  we can therefore assume that~$\phi_D(p_*(y))= x_{\C P^2}$.

Using the connecting homomorphism~$\partial_D$ in the long exact sequence of the pair $(N_D^\infty,M_K^\infty)$,  we see that the generator~$y^* \in H_2(N_D^\infty,M_K^\infty)$ determines a generator~$x:=\partial_D([y^*]) \in H_1(M_K^\infty)$; this uses the fact that $\partial_D$ is surjective.
Since $y$ generates $H_2(N_D^\infty) \cong \Z[t^{\pm 1}]$, Proposition~\ref{prop:barCP2} shows that~$\lambda_{N_D}(y,y)=\unaryminus\Delta_K$.
Claim~\ref{claim:UsualTrick} implies that 
$$\Bl_K(x,x) \equiv \unaryminus\lambda_{N_D}(y,y)^{-1}=\frac{1}{\Delta_K}$$
 and therefore~$x \in \mathcal{G}_K$.
As mentioned in Proposition~\ref{prop:BF},  work of Borodzik-Friedl implies that there is a crossing change $\Z$-disk $D_\gamma$ with boundary $K$ and~$[\widetilde{\gamma}]=x \in \mathcal{G}_K$.

We show that $D_\gamma$ is isotopic rel.\  boundary to the $\Z$-disk $D$ that we started with.
Consider the generator $[\Sigma] \in H_2(N_{D_\gamma};\Z[t^{\pm 1}])$ from Construction~\ref{cons:Basis}.
Recall from Proposition~\ref{prop:BelongstoG} that~$\partial_{D_\gamma}([\Sigma]^*)=[\widetilde{\gamma}]$ and~$\phi_{D_\gamma}([\Sigma])=x_{\C P^2}$.
By construction of $y$,  we also have~$\partial_D(y^*)=[\widetilde{\gamma}]$ and $\phi_D([y])=x_{\C P^2}$.
Proposition~\ref{prop:CP20BasisVersion} now implies that~$D$ and~$D_\gamma$ are isotopic rel.\ boundary.
\end{proof}

Combining these results,  we obtain Theorem~\ref{thm:DiscsWithoutCPP22Intro} from the introduction.

\begin{customthm}{\ref{thm:DiscsWithoutCPP22Intro}}
\label{thm:DiscsWithoutCPP22}
Let $K$ be a knot.
\begin{enumerate}
\item  Every~$\Z$-disk in~$\cpring$ with boundary $K$ is isotopic rel.\ boundary to a crossing change~$\Z$-disk.
\item Two crossing change~$\Z$-disks with boundary $K$ are isotopic rel.\  boundary if and only if the homology classes of the lifts of their surgery curves agree in the Alexander module $H_1(E_K^\infty)$ up to multiplication by $t^k$ for some~$k \in \Z$.
\item Mapping a crossing change $\Z$-disk with boundary $K$ to the homology class of the lift of its surgery curve defines a bijection
$$\mathcal{D}_\Z(K,\cpring) \xrightarrow{\approx}  \mathcal{G}_K/\lbrace t^k \rbrace_{k \in \Z}.$$
\end{enumerate}
\end{customthm}
\begin{proof}
The first item is proved in Theorem~\ref{thm:GeneralisedCrossingChange} and the second in Proposition~\ref{prop:CharacCrossingChangeDisc}.
The third item is also a consequence of these results as we now explain.
The combination of these results shows that the map ~$\mathcal{D}_\Z(K,\cpring) \to \mathcal{G}_K/\lbrace t^k \rbrace_{k \in \Z}$ is well defined and injective: Theorem~\ref{thm:GeneralisedCrossingChange} shows that every $\Z$-disk is isotopic rel.\ boundary to a crossing change $\Z$-disk and Proposition~\ref{prop:CharacCrossingChangeDisc} states that this disk is uniquely determined by the homology class of the lift of the surgery curve.
Surjectivity is a consequence of Proposition~\ref{prop:BF}.
\end{proof}

\begin{remark}
\label{rem:Equivalence}
All the results in this section can be modified to involve equivalence rel.\ boundary instead of isotopy rel.\ boundary: in Propositions~\ref{prop:CP20BasisVersion} and~\ref{prop:CharacCrossingChangeDisc} and Theorem~\ref{thm:DiscsWithoutCPP22}, one substitutes multiplication (and modding out) by $t^k$ with multiplication (and modding out) by $\pm t^k$.
We leave the details to the reader, but note the key point: an equivalence between $\Z$-surfaces in $\cpring$ need not induce the identity on $H_2(\cpring)$; indeed it can also induce multiplication by~$-1$.
\end{remark}

\section{Examples of $\Z$-disks in $\cpring$}
\label{sec:Examples}

The goal of this section is to describe examples of smoothly embedded $\Z$-disks in $\cpring$ with a focus on attempting to find smooth representatives for the enumeration from Theorem~\ref{thm:UniquenessIntro}. Smooth representatives will be constructed using the following observation.

\begin{observation}\label{obsv:smooth}
Let $\gamma$ be a generalized unknotting curve for $K$ such that applying a positive generalized crossing change to $K$ along $\gamma$ yields a \textit{smoothly} $\Z$-slice knot. Then the disk $D_\gamma$ is smoothable.
\end{observation}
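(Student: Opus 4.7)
The plan is to unpack the construction of $D_\gamma$ from Definition~\ref{def:CrossingChangeZdisc} and observe that each of its three ingredients can be chosen to be smooth under the hypothesis. Recall that
$$ (\cpring,D_{\gamma})=\left( - ((S^3 \times [0,1]) \cup_{c \times \lbrace 1 \rbrace} h^2),C_{K,f} \right) \cup_h (D^4,\Delta),$$
so the disk~$D_\gamma$ is assembled from (i) the trace $C_{K,f}$ of an ambient isotopy~$(f_t)_{t \in [0,1]}$ of~$S^3$ sending the oriented curve~$\gamma$ to the oriented attaching circle~$c$, (ii) the cocore-complement of the~$2$-handle~$h^2$, which is automatic, and (iii) a~$\Z$-disk~$\Delta \subset D^4$ with boundary the knot~$J=h(-f_1(K))\subset S^3$ arising from the positive generalized crossing change along~$\gamma$.

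First, I would note that the ambient isotopy~$(f_t)_{t \in [0,1]}$ taking~$\gamma$ to~$c$ can be chosen to be smooth. Indeed, both~$\gamma$ and~$c$ are smooth unknots in~$S^3$, and any two smooth unknots in~$S^3$ are smoothly ambient isotopic; applying the smooth isotopy extension theorem yields a smooth ambient isotopy~$(f_t)$ with~$f_0=\id_{S^3}$ and~$f_1(\gamma)=c$ as oriented curves. Since~$K$ is smooth and~$(f_t)$ is smooth, the trace~$C_{K,f} \subset -(S^3 \times [0,1])$ of~$(f_t|_K)_{t \in [0,1]}$ is a smoothly embedded annulus, and its boundary on~$S^3 \times \{1\}$ is the smooth knot~$f_1(K)$, disjoint from~$c = f_1(\gamma)$. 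The~$2$-handle~$h^2$ is attached smoothly along~$c$, so the piece~$C_{K,f} \subset -((S^3 \times [0,1]) \cup_{c \times \{1\}} h^2)$ remains smoothly embedded after handle attachment and its boundary component on the upper end is the smooth knot~$K' \subset S^3_{-1}(c)$, identified via the (smooth) homeomorphism~$h$ with~$J \subset S^3$.

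Next, the hypothesis that the positive generalized crossing change along~$\gamma$ yields a smoothly~$\Z$-slice knot means precisely that~$J$ bounds a smoothly embedded properly embedded disk~$\Delta \subset D^4$ with~$\pi_1(D^4 \setminus \Delta)\cong \Z$. I would use this smooth disk as the capping piece in Definition~\ref{def:CrossingChangeZdisc}. Since all the pieces are smooth and the gluing is performed via the smooth homeomorphism~$h \colon S^3_{-1}(c) \xrightarrow{\cong} S^3$, the resulting disk~$D_\gamma \subset \cpring$ is smoothly embedded.

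There is essentially no hard step: the entire content of the observation is that each constituent of the construction admits a smooth model once the slice disk for~$J$ does. Concretely, Proposition~\ref{prop:diskdoesntdepend} guarantees that any such choice yields a disk isotopic rel.\ boundary to~$D_\gamma$ in the topological category, so that the smooth model we have just constructed represents the same topological isotopy class as~$D_\gamma$. The only subtlety worth flagging is orientations: the statement refers to a \emph{positive} generalized crossing change, which forces the framing on~$\gamma$ used in the surgery description to match the~$-1$-framing of~$c$ that yields the~$\C P^2$-handle, ensuring the ambient~$4$-manifold is genuinely~$\cpring$ rather than~$\ocpring$. With this sign bookkeeping handled exactly as in Convention~\ref{conv:Fixed} and Notation~\ref{not:Data}, the construction goes through smoothly, and the observation follows.
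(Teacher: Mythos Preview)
Your argument is correct. The paper states this as a bare observation with no proof at all; what you have written is exactly the implicit justification, namely that each of the three ingredients in Definition~\ref{def:CrossingChangeZdisc} (the trace of a smooth ambient isotopy, the smooth $2$-handle attachment, and the smooth $\Z$-disk $\Delta\subset D^4$ supplied by the hypothesis) can be chosen smoothly and glued smoothly.
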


\subsection{Smooth realization} We now discuss some explicit examples of $\Z$-disks in $\cpring$. From now on, as in the introduction, we use $K_n$ to denote the twist knot depicted in Figure~\ref{fig:twistknot}.
We recall that the Alexander polynomial of $K_n$ is
$$\Delta_n:=nt-(2n+1)+nt^{-1}.$$

We start with the trefoil $K_{-1}$, which 
bounds a unique $\Z$-disk in $\cpring$. This disk can be realized smoothly.

\begin{example}
\label{ex:Trefoil}
Theorem~\ref{thm:UniquenessIntro} predicts that the trefoil knot~$K_{-1}$ bounds a unique $\Z$-disk in $\cpring$ up to isotopy rel.\ boundary. 
The fact that the disk can be realized smoothly is well-known; we illustrate a crossing change curve $\gamma$ for $K_{-1}$ with $D_\gamma$ smoothly embedded (according to Observation~\ref{obsv:smooth}) in Figure \ref{fig:trefoil}.

\begin{figure}[!htbp]
    \includegraphics[width=25mm]{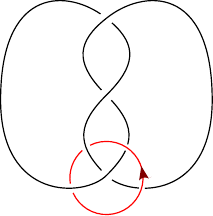}
    \caption{A positive crossing change curve for $K_{-1}$. By Observation \ref{obsv:smooth}, the resulting disk in $\cpring$ can be taken to be
smoothly embedded.}
    \label{fig:trefoil}
\end{figure}
\end{example}

As discussed in Section~\ref{sec:CrossingChange}, in order to define $D_\gamma$ the unknotting curve $\gamma$ must be oriented. 
Reversing orientation on $\gamma$ gives a disk $D_{-\gamma}$ which need not be isotopic rel.\ boundary to~$D_\gamma$.
From this perspective, it may seem counterintuitive that $K_{-1}$ bounds a unique $\Z$-disk. However, in the case of the trefoil, the curves $\gamma$ and $-\gamma$ are in fact isotopic in the complement of $K_{-1}$, as illustrated in Figure \ref{fig:trefoildiscreverse}. Thus, the disks $D_{\gamma}$ and $D_{-\gamma}$ are even smoothly isotopic rel.\ boundary. 

\begin{figure}[!htbp]
    \includegraphics[width=80mm]{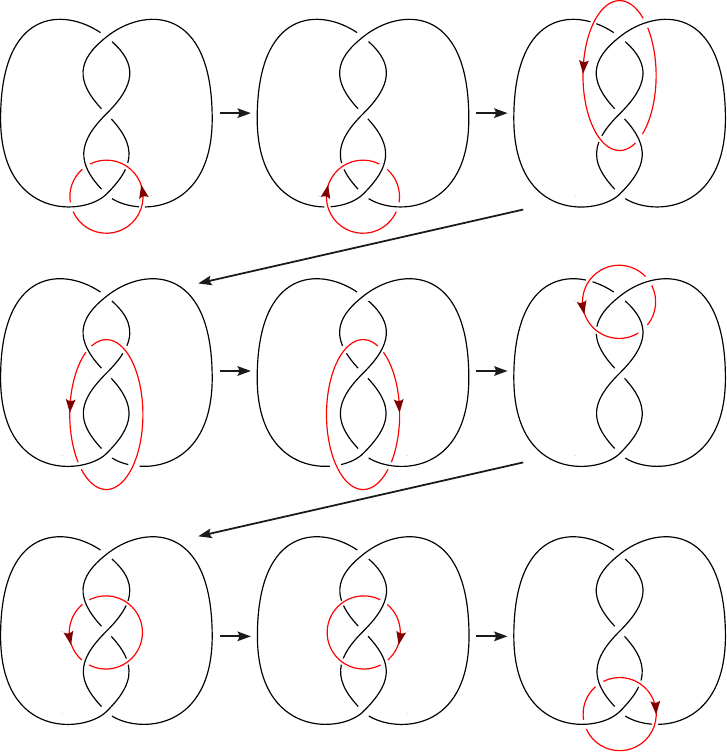}
    \caption{From left to right, top to bottom, we illustrate an isotopy from $\gamma$ to~$-\gamma$ in the complement of $K_{-1}$.}
    \label{fig:trefoildiscreverse}
\end{figure}

\begin{remark}
\label{rem:pmdistinctExampleSection}
In Proposition~\ref{prop:pmdistinct} we show that if $\Delta_K = \Delta_n$, then the two disks $D_{\pm \gamma}$ are always distinct except in the cases $n = 0$ and $n = -1$. In these latter cases, $D_{\pm \gamma}$ are always topologically isotopic rel.\ boundary (for any $\gamma$). 
\end{remark}

\begin{example}
\label{ex:FigureEight}
Theorem~\ref{thm:UniquenessIntro} predicts that the figure eight knot~$K_1$ bounds two~$\Z$-disks in~$\cpring$ up to isotopy rel.\ boundary. 
Figure \ref{fig:fig8} shows an unoriented unknotting curve~$\gamma$ for~$K_1$. 
The two possible orientations of~$\gamma$ yield~$\Z$-disks for~$K_1$ in~$\cpring$ that (by Proposition~\ref{prop:pmdistinct}) are not isotopic rel.\ boundary and hence represent the two possible classes of such disks. 
Since performing a positive crossing change along~$\gamma$ yields the unknot, these two disks are smoothly embedded in~$\cpring$.
\end{example}

\begin{figure}[!htbp]
\centering
\includegraphics[width=30mm]{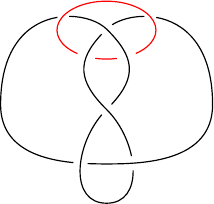}
\caption{An unoriented unknotting curve for $K_1$. The two possible choices of orientation yield the two isotopy rel.\ boundary classes of $\Z$-disk for $K_1$ in $\cpring$.}
\label{fig:fig8}
\end{figure}

\begin{example}\label{ex:knsquared}
We now consider the twist knots~$K_n$ with $n=-k^2$ and $k>1$; the case $k = 1$ has been treated in Example~\ref{ex:Trefoil}.
Theorem~\ref{thm:UniquenessIntro} predicts that, up to isotopy rel.\ boundary, $K_n$ bounds four disks when $k$ is a prime power, and infinitely many $\Z$-disks otherwise.
We realize two of these disks smoothly and, for $k=2$, we realize all four of the disks smoothly. 

The left hand side of Figure \ref{fig:kminus4example} shows two generalized unknotting curves $\gamma_1$ and $\gamma_2$ for the knot~$K_{-k^2}$. 
The fact that $\gamma_1$ is an unknotting curve is evident, but we show explicitly in Figure~\ref{fig:kminus4example} that a positive generalized crossing change along $\gamma_2$ results in an Alexander polynomial one knot.
Specifically, performing a generalized crossing change along $\gamma_2$ yields the negative untwisted Whitehead double of the torus knot $T(k,k-1)$. 
The disks $D_{\pm\gamma_1}$ are smoothly embedded by Observation~\ref{obsv:smooth}. 
However, we can only ascertain that $D_{\pm\gamma_2}$ are smoothly embedded when~$k=2$, since in this case~$T(k,k-1)$ is the unknot.

\begin{figure}[!htbp]
\labellist
\pinlabel{\footnotesize{-$k^2$}} at 15 102
\pinlabel{$k$} at 56 100
\pinlabel{\textcolor{blue}{$\gamma_1$}} at 50 185
\pinlabel{\textcolor{red}{$\gamma_2$}} at 130 100
\pinlabel{\footnotesize{-$k^2$}} at 158 65
\pinlabel{$S^3_{-1}(\gamma_2)$} at 343 115
\pinlabel{\huge{$\sim$}} at 130 130
\pinlabel{$+1$} at 490 95
\pinlabel{\footnotesize{-$k^2$}} at 380 65
\pinlabel{\footnotesize{-$k^2$}} at 565 65
\pinlabel{\rotatebox{-100}{$2k$}} at 227 25
\pinlabel{\rotatebox{-100}{$2k$}} at 447 25
\pinlabel{\rotatebox{-100}{$2k$}} at 633 25
\pinlabel{\footnotesize{$k-1$}} at 634 106
\pinlabel{\huge{$\sim$}} at 540 130
\endlabellist
    \includegraphics[width=130mm]{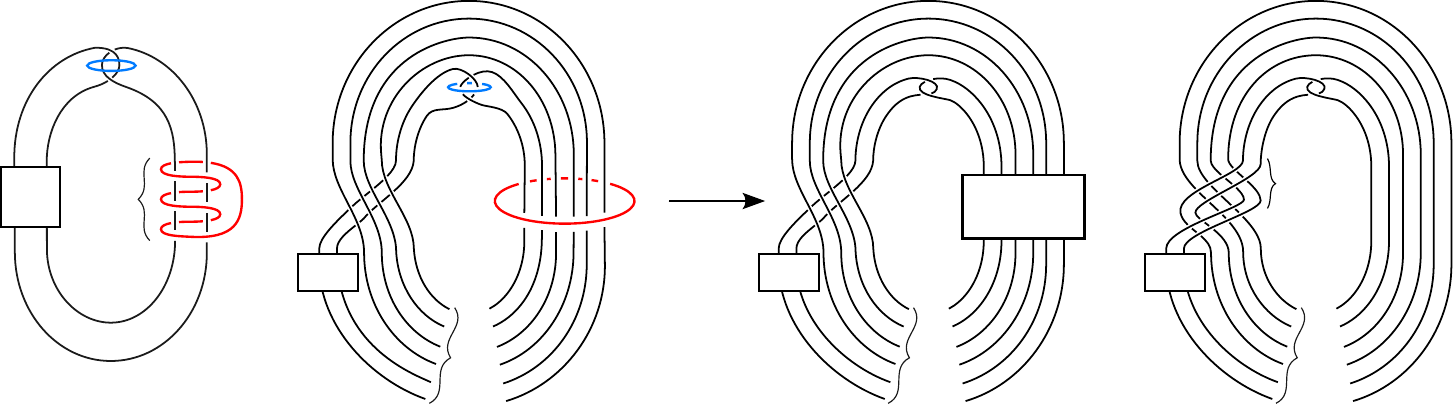}
    \caption{
\textbf{Left:} Two surgery curves $\gamma_1,\gamma_2$ for $K_{n}$ ($n=-k^2, k>1$). 
\textbf{Right:} Adding a positive twist about $\gamma_2$ transforms $K_n$ into the untwisted negative Whitehead double of $T_{k,k-1}$.}\label{fig:knsquaredexample}
\end{figure}

We claim that $D_{\pm \gamma_1}$ and $D_{\pm \gamma_2}$ represent four distinct topological isotopy classes.
For this, after fixing a basepoint, we determine lifts of $\pm \gamma_1$ and $\pm \gamma_2$ to $E_{K_{-k^2}}^\infty$. Figure \ref{fig:kminus4example} illustrates this for~$k=2$; the general case is similar. For an appropriate choice of the generator of the Alexander module~$H_1(E_{K_{-k^2}}^\infty)$, these lifts are given by:
\begin{align*}
[\pm \widetilde{\gamma}_1]&=\pm(nt-2n+nt^{-1}) \equiv \mp 1,  \\
[\pm \widetilde{\gamma}_2]&=\pm(kt-k).
\end{align*}
In Proposition~\ref{prop:fourdistinct}, we show that these are distinct elements of $\mathcal{G}_{K_n}/\{ t^\ell \}_{\ell \in \Z}$. If $k$ is a prime power, then by Theorem~\ref{thm:UniquenessIntro}, this realizes all of the $\Z$-disks for $K_n$ up to isotopy rel.\ boundary. 
\begin{figure}[!htbp]
\labellist
\pinlabel{\textcolor{gray}{$+1$}} at 156 110
\pinlabel{\textcolor{blue}{$\gamma_1$}} at 85 107
\pinlabel{\textcolor{red}{$\gamma_2$}} at 60 10
\pinlabel{\textcolor{gray}{$+1$}} at 270 35
\pinlabel{\rotatebox{120}{$-4$}} at 302 107
\pinlabel{\textcolor{gray}{$-7$}} at 500 15
\pinlabel{\textcolor{gray}{$-7$}} at 500 55
\pinlabel{\textcolor{gray}{$-7$}} at 500 105
\pinlabel{\textcolor{gray}{$-7$}} at 500 145
\pinlabel{\rotatebox{90}{$-4$}} at 420 32
\pinlabel{\rotatebox{90}{$-4$}} at 420 80
\pinlabel{\rotatebox{90}{$-4$}} at 420 128
\endlabellist
    \includegraphics[width=130mm]{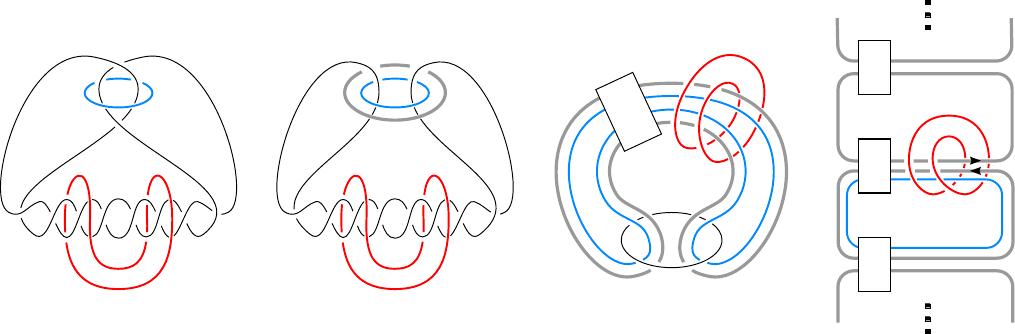}
    \caption{
\textbf{Leftmost}: Two surgery curves $\gamma_1,\gamma_2$ for $K_{-4}$. \textbf{From left to right:} Finding lifts of $\gamma_1, \gamma_2$ in $E_{K_{-4}}^\infty$. 
\textbf{Rightmost}: A surgery presentation of~$E_{K_{-4}}^\infty$; a meridian of a surgery curve represents the homology class $\pm t^k$ for some~$k$ and choosing which meridian represents~$1$ determines a basis for~$H_1(E_{K_{-4}}^\infty)$. 
For one choice of basis,  the pictured lift of $\gamma_1$ represents~$\pm(4t-8+4t^{-1})\equiv \mp 1$ while the pictured lift of~$\gamma_2$ represents~$\pm(2-2t)$.}\label{fig:kminus4example}
\end{figure}
\end{example}

\begin{figure}[!htbp]
\labellist
\pinlabel{\footnotesize{\textcolor{gray}{$+1$}}} at 140 327
\pinlabel{\textcolor{blue}{$\gamma_1$}} at 133 187
\pinlabel{\textcolor{red}{$\gamma_2$}} at 105 75
\pinlabel{\textcolor{gray}{$+1$}} at 50 187
\pinlabel{\rotatebox{90}{$D(\frac{-k+1}{k})$}} at 70 107
\pinlabel{\footnotesize{$-2k+1$}} at 82 142
\pinlabel{\rotatebox{90}{$D(\frac{-k+1}{k})$}} at 269 141
\pinlabel{\footnotesize{$-2k+1$}} at 281 175
\pinlabel{\rotatebox{90}{$D(\frac{-k+1}{k})$}} at 269 38
\pinlabel{\footnotesize{$-2k+1$}} at 281 73
\endlabellist
    \includegraphics[width=130mm]{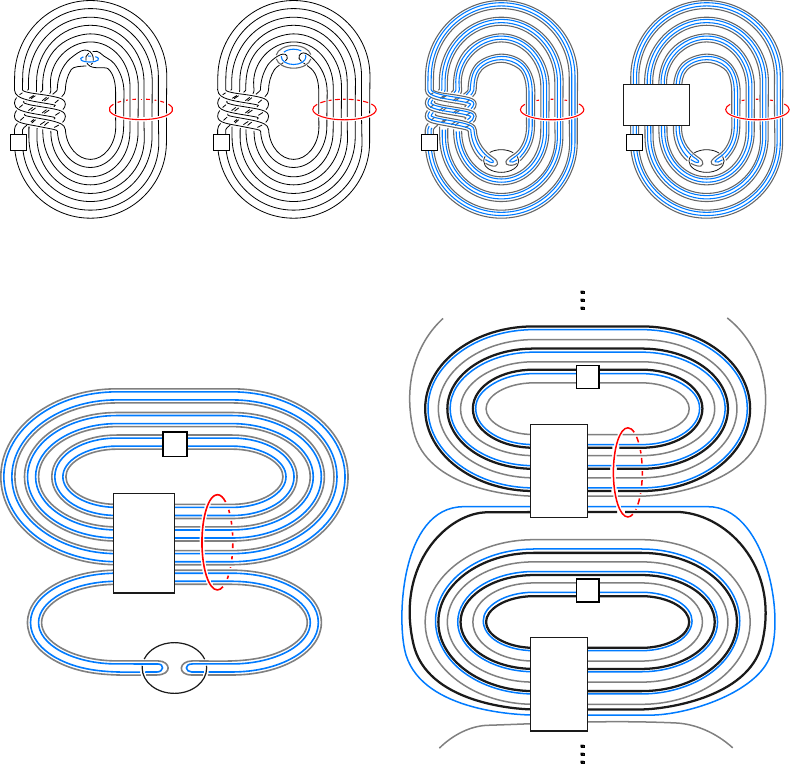}
    \caption{\textbf{Top left:} 
Two surgery curves $\gamma_1,\gamma_2$ for~$J_{-k^2}$, $k>1$, pictured for~$k=~4$.
    The square twist box contains $(k-1)^2-k^2=-2k+1$ full twists, so that the pictured knot is the $-k^2$-twisted Whitehead double $J_{-k^2}$. 
    \textbf{Top row, from left to right:} we manipulate the original figure until obtaining the bottom left configuration. 
    The rectangular box indicates the double of a $-(k-1)/k$ twist (compare the third and fourth figures in the top row). 
    \textbf{Bottom right:}
    We find lifts of $\gamma_1, \gamma_2$ (for some choice of basepoint) to $E_{J_{-k^2}}^\infty$, suppressing surgery framings. In the surgery diagram of $E_{J_{-k^2}}^\infty$, we make one surgery curve bold for clarity.  
    For one choice of basis of~$H_1(E_{J_{-k^2}}^\infty)$ (specifically, taking the meridian of the bold surgery curve to represent~$1$ and the meridian of the surgery curve obtained by translating the bold one upward represents $t$), the lift of $\gamma_1$ represents $\pm (k^2 t-2k^2+k^2 t^{-1})\equiv \mp 1$ while the lift of
    $\gamma_2$ represents $\pm(kt-k)$.
    }\label{fig:toruswhitehead}
\end{figure}

We now turn to a slightly different example which also has Alexander polynomial $\Delta_{-k^2}$.

\begin{example}\label{ex:toruswhitehead}
Let $n=-k^2$ and $k>1$. In Example \ref{ex:knsquared},  we could only ascertain that (in the general case) two of the four~$\Z$-disks bounded by~$K_n$ were smoothly embedded. 
Let~$J_n$ instead denote the~$n$-twisted negative Whitehead double of the torus knot~$T(-k,k-1)$.  
The knots~$J_n$ and~$K_n$ have the same Alexander polynomial because they are $n$-twisted Whitehead doubles.
Since~$J_n$ is a genus one knot with~$\Delta_{J_n} = \Delta_{K_n}$, Theorem~\ref{thm:UniquenessIntro} implies that~$J_n$ bounds four isotopy rel.\ boundary classes of~$\Z$-disks in~$\cpring$ if~$k$ is a prime power and infinitely many classes otherwise. We argue that, this time, we can realize the four classes smoothly. 

Applying a positive twist to either of the curves~$\gamma_1$ or~$\gamma_2$ in Figure \ref{fig:toruswhitehead} (top left) transforms~$J_n$ into the unknot. As illustrated in Figure \ref{fig:toruswhitehead}, with an appropriate choice of the generator of the Alexander module~$H_1(E_{K_{-k^2}}^\infty)$, the lifts of $\pm \gamma_1$ and $\pm \gamma_2$ are again given by 
\begin{align*}
[\pm \widetilde{\gamma}_1]&=\pm(nt-2n+nt^{-1}) \mp \pm 1,  \\
[\pm \widetilde{\gamma}_2]&=\pm(kt-k).
\end{align*}
Since $\Delta_{J_n} = \Delta_{K_n}$, an argument identical to that of Example~\ref{ex:knsquared} shows that these disks are distinct. Once again, if $k$ is a prime power, then by Theorem~\ref{thm:UniquenessIntro}, this realizes all of the $\Z$-disks for $J_n$ up to isotopy rel.\ boundary. 
\end{example}

\subsection{Toward new exotic pairs}\label{sec:exotic}


Several authors have used knot Floer homology to distinguish pairs of surfaces up to smooth isotopy rel.\ boundary \cite{JuhaszMarengon, JuhaszZemke}. These techniques have been used to detect exotic pairs of higher-genus surfaces \cite{JuhaszMillerZemke}, and, subsequently, study exotic pairs of disks \cite{DaiMallickStoffregen}. Below, we describe a prompt
generalization of the results of \cite{DaiMallickStoffregen} to the present situation; in conjunction with Theorem~\ref{thm:DiscsWithoutCPP22Intro}, we explain how this gives a hypothetical program for generating exotic pairs of disks in~$\cpring$.

Let~$\tau \colon S^3 \rightarrow S^3$ be given by~$\pi$-rotation about an unknot in~$S^3$. The involution~$\tau$ admits two extensions~$\tau_+$ and~$\tau_-$ over~$\cpring$, which are constructed as follows. In the model for~$\cpring$ described in Definition~\ref{def:CrossingChangeZdisc}, extend~$\tau$ over~$S^3 \times I$ by~$\tau \times \id$. We then place the~$2$-handle attaching curve~$c \subset S^3 \times \{1\}$ in either of the positions displayed in Figure~\ref{fig:involutions}.

\begin{figure}[!htbp]
\labellist
\pinlabel {\textcolor{darkgreen}{$-1$}} at -5 50
\pinlabel {\textcolor{darkgreen}{$-1$}} at 90 50
\pinlabel{$\tau_+$} at 15 -8
\pinlabel{$\tau_-$} at 72 -8
\endlabellist
    \includegraphics[width=50mm]{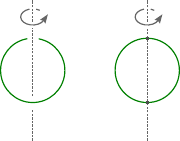}
    \vspace{.2in}
    \caption{Two involutions~$\tau_+$ and~$\tau_-$ of~$\cpring$ that both extend the involution~$\tau$ of the boundary~$S^3$. (Here we give relative handle diagrams of~$-\cpring$; reverse the orientation to obtain actual~$\cpring$.) 
 On~$H_2(\cpring;\mathbb{Z})$,~$\tau_+$ fixes the attaching circle~$c$ setwise and preserves its orientation, thus inducing the identity map. 
On the other hand,~$\tau_-$ fixes~$c$ setwise but with reversed orientation so induces multiplication by~$-1$.}\label{fig:involutions}
\end{figure}

 On the left,~$\tau$ has no fixed points on~$c$ and sends~$c$ to itself as an oriented curve, while on the right,~$\tau$ has two fixed points on~$c$ and reverses orientation on~$c$. In either case, we may extend~$\tau$ over the~$2$-handle attachment and then over the capping~$4$-ball via the Alexander trick; see for example \cite[Section 5.1]{DaiHeddenMallick}. Denote the first extension by~$\tau_+$ and second by~$\tau_-$. It is readily checked that the induced actions of~$\tau_+$ and~$\tau_-$ on the second homology of~$\cpring$ are given by
\[
(\tau_+)_* = \id \quad \text{and} \quad (\tau_-)_* = - \id.
\]

Now let~$K$ be a knot which is fixed setwise by the involution~$\tau$. We allow~$\tau$ to either fix or reverse orientation on~$K$. Associated to such an equivariant knot~$(K, \tau)$, there are four numerical invariants
\[
\Vtu(K) \leq \Vtl(K) \quad \text{and} \quad \Vitu(K) \leq \Vitl(K)
\]
constructed in \cite[Theorem 1.1]{DaiMallickStoffregen} and \cite[Section 8]{DaiMallickStoffregen}. In \cite{DaiMallickStoffregen}, it is shown that these invariants obstruct \textit{isotopy-equivariant sliceness}, in the following sense: let~$\tau_{D^4}$ be any extension of~$\tau$ over~$D^4$. If any of the above numerical invariants are non-zero, then for any slice disk~$D$ with boundary~$K$, the two disks~$D$ and~$\tau_{D^4}(D)$ are not smoothly isotopic rel.\ boundary. \footnote{If~$\Sigma$ is a slice surface such that~$\Sigma$ and~$\tau_{D^4}(\Sigma)$ are isotopic rel.\ boundary, we say that~$\Sigma$ is \textit{isotopy equivariant}. Hence~$\Vtu$,~$\Vtl$,~$\Vitu$, and~$\Vitl$ obstruct the existence of an isotopy-equivariant slice disk for~$K$. See \cite[Section~2.1]{DaiMallickStoffregen}.}

Although \cite{DaiMallickStoffregen} is stated in terms of slice disks in~$D^4$, it is not hard to generalize the results of \cite[Theorem 1.1]{DaiMallickStoffregen} and \cite[Section 8]{DaiMallickStoffregen} to disks in other manifolds by tracing through the ideas of \cite[Section~7.2]{DaiMallickStoffregen}. 
Explicitly, let~$K$ be an equivariant knot in~$S^3$ and~$D$ be any smooth~$H$-slice disk for~$K$ in~$\cpring$ i.e. $D$ represents the zero class in $H_2(\cpring,\partial \cpring)$.
 Then:
\begin{enumerate}
\item If~$\Vtu(K) < 0$, then~$D$ and~$\tau_+(D)$ are not smoothly isotopic rel.\ boundary.
\item If~$\Vitu(K) < 0$, then~$D$ and~$\tau_-(D)$ are not smoothly isotopic rel.\ boundary.
\end{enumerate}
Recalling that $\Z$-disks are examples of~$H$-slice disks,
the relevance of this to the present situation is the following. If~$D_\gamma$ is a generalized crossing change disk associated to an oriented, generalized unknotting curve~$\gamma$, then the construction of~$\tau_{\pm}$ shows that:

\begin{enumerate}
\item $\tau_+(D_\gamma) = D_{\tau(\gamma)}$ and
\item $\tau_-(D_\gamma) = D_{- \tau(\gamma)}$.
\end{enumerate}
Here, we give~$\tau(\gamma)$ the pushforward orientation of~$\gamma$ under~$\tau$. Indeed, clearly both~$\tau_+(D_\gamma)$ and~$\tau_-(D_\gamma)$ are generalized crossing change disks for~$K$ associated to the unknotting curve~$\tau(\gamma)$. The only subtlety is that in the latter case, pushing forward the isotopy from~$\gamma$ to~$c$ under~$\tau_{-}$ gives an isotopy from~$\tau(\gamma)$ to~$-c$. Hence in this situation, we instead have~$\tau_-(D_\gamma) = D_{- \tau(\gamma)}$.

We are thus led to the following concrete question:

\begin{question}\label{question:condition}
Does there exist an equivariant knot $K$ such that either:

\begin{enumerate}
\item $\Vtu(K) < 0$ and $K$ admits a generalized positive crossing change curve $\gamma$ to a smoothly $\Z$-slice knot in $D^4$ such that 
\[
[\tilde{\gamma}] = [\widetilde{\tau(\gamma)}] \in H_1(E^\infty_K).
\]
\item $\Vitu(K) < 0$ and $K$ admits a generalized positive crossing change curve $\gamma$ to smoothly $\Z$-slice knot in $D^4$ such that
\[
[\tilde{\gamma}] = [- \widetilde{\tau(\gamma)}] \in H_1(E^\infty_K).
\]
\end{enumerate}
\end{question}
\noindent
Either alternative produces a pair of exotic slice disks for $K$ in $\cpring.$

\begin{remark}
Unfortunately, the search for such a $\gamma$ is not so straightforward:
if $\gamma$ is an actual (oriented) positive unknotting curve and $\tau(\gamma) = \gamma$, then $\Vtu(K) \geq \Vtu(U) = 0$, while if $\tau(\gamma) = - \gamma$, then $\Vitu(K) \geq \Vitu(U) = 0$. 
These inequalities follow from~\cite[Section~7.2]{DaiHeddenMallick}.
It may thus be fruitful to search for examples among equivariant knots with equivariant crossing change number greater than one. 
\end{remark}


\section{Units in rings of quadratic integers}
\label{sec:NumberTheory}

We have reduced the enumeration of~$\Z$-disks in~$(\C P^2)^\circ$ with boundary a knot~$K$ to understanding (a quotient of) the group~$U(\Z[t^{\pm 1}]/(\Delta_K))$.
Here, recall that 
$$U(\Z[t^{\pm 1}]/(\Delta_K))=\lbrace x \in  \Z[t^{\pm 1}]/(\Delta_K) \mid x\overline{x}=1 \rbrace.$$
Studying this group involves some number theory. In Section~\ref{sec:UnitaryUnits}, we re-interpret $U(\Z[t^{\pm 1}]/(\Delta_K))$ in terms of the localization of a ring of quadratic integers $\Z[\omega]$. This latter object will thus be the focus of the present section. Section~\ref{sub:RingsOfQuadraticIntegers} collects some definitions and facts concerning $\Z[\omega]$, whereas Section~\ref{sub:UnitsZ[omega,1/n]} describes the group of units of~$\Z[1/n, \omega]$ for any nonzero $n \in \Z$. 

\subsection{Rings of quadratic integers}
\label{sub:RingsOfQuadraticIntegers}

We recall some facts about rings of quadratic integers.
Further discussion of these can be found in many texts on algebraic number theory such as~\cite{Neukirch}. 
We note that the following results admit generalizations to rings of (not necessarily quadratic)  integers, but we will not use these here.

\begin{definition}
Given a square free integer $d$, let
\begin{equation}
\label{eq:Omega}
\omega = 
\begin{cases}
\sqrt{d} & \text{ if } d \equiv 2, 3 \bmod 4 \\
\dfrac{1 + \sqrt{d}}{2} & \text{ if } d \equiv 1 \bmod 4,
\end{cases}
\end{equation}
We refer to $\Z[\omega]$ as a \emph{ring of quadratic integers}.
\end{definition}

The reason for this terminology is as follows: the field $\Q(\sqrt{d})$ is typically called a \emph{quadratic number field}, and $\Z[\omega]$ is the integral closure of $\Z$ in $\Q(\sqrt{d})$, see e.g.~\cite[Section 3.1]{Stewart}.
The next result collects some properties of this ring and its ideals. For a proof of the first assertion we refer to~\cite[Section 5.2]{Stewart}; for the second we refer to~\cite[Section 9.3]{Stewart}.

\begin{theorem}[Finiteness of the Class Group]
\label{thm:DedekindClass}
Given a square free integer $d$ and $\omega$ as in~\eqref{eq:Omega},
\begin{enumerate}
\item The ring of integers~$\Z[\omega]$ is a Dedekind domain: every ideal in $\Z[\omega]$ admits a unique prime factorization.
\item The ring of integers~$\Z[\omega]$ has finite class number: there exists some $h \in \N$ such that $\mathfrak{p}^h$ is principal for any ideal $\mathfrak{p}$ in $\Z[\omega]$. 
\end{enumerate}
\end{theorem}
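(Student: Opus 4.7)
The plan is to verify the two classical statements about the ring $\Z[\omega]$, following for instance \cite{Stewart}. Both facts are standard in algebraic number theory, so I will only sketch the main points.

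For the first assertion, I would recall the usual characterization of a Dedekind domain as an integrally closed Noetherian integral domain of Krull dimension at most one, and verify these three properties for $\Z[\omega]$. Noetherianity is immediate: since $\Z[\omega]$ is a free $\Z$-module of rank two, every ideal is finitely generated as a $\Z$-module and hence as a $\Z[\omega]$-module. Integral closure follows tautologically from the definition of $\omega$, because $\Z[\omega]$ is precisely the integral closure of $\Z$ in $\Q(\sqrt{d})$; verifying that one obtains the formula in~\eqref{eq:Omega} requires a short case analysis on $d \bmod 4$ using the minimal polynomial of an element $a+b\sqrt{d} \in \Q(\sqrt{d})$. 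Finally, for Krull dimension one, one checks that any nonzero prime ideal $\mathfrak{p}$ is maximal: if $x\in\mathfrak{p}\setminus\{0\}$, then $N(x) = x\overline{x} \in \mathfrak{p}\cap\Z$ is a nonzero integer, so $\mathfrak{p}\cap\Z=(p)$ for some rational prime $p$, and $\Z[\omega]/\mathfrak{p}$ is then a finite integral domain, hence a field. Unique factorization of ideals into prime ideals then follows from the general theorem that any Dedekind domain has this property, as proved for instance in \cite[Section~5.2]{Stewart}.

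For the second assertion, I would invoke Minkowski's geometry-of-numbers argument. The key step is to produce a constant $M_d$, depending only on $d$, such that every ideal class in $\Z[\omega]$ contains an integral ideal of absolute norm at most $M_d$. Concretely, one embeds $\Z[\omega]$ as a lattice in $\R^2$ via the two real embeddings (if $d>0$) or as a lattice in $\C\cong\R^2$ via the complex embedding (if $d<0$), computes its covolume in terms of the discriminant, and applies Minkowski's convex body theorem to a suitably chosen symmetric convex region in each ideal. Since there are only finitely many ideals of bounded norm (each sits inside the finite ring $\Z[\omega]/(n)$ for some $n \le M_d$), the class group is finite; call its order $h$. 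Then $\mathfrak{p}^h$ is principal for every ideal $\mathfrak{p}$, as required. A detailed exposition is given in \cite[Section~9.3]{Stewart}.

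Since both parts are textbook results, the main ``obstacle'' is merely a bookkeeping one: one should confirm that Stewart's conventions on $\omega$ and on the normalization of the discriminant match~\eqref{eq:Omega}, so that the cited statements apply directly to our $\Z[\omega]$. With that verification in hand, the two items follow immediately from the cited references and no further argument is needed.
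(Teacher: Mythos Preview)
Your proposal is correct and aligns with the paper's approach: the paper does not give any argument at all for this theorem, but simply cites \cite[Section~5.2]{Stewart} for the first item and \cite[Section~9.3]{Stewart} for the second. Your sketch of the standard Dedekind-domain verification and the Minkowski geometry-of-numbers argument is more detailed than anything the paper provides, but it is exactly the content of those cited references, so there is no divergence in approach.
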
 

The following result describes the units of $\Z[\omega]$ and is known as \emph{Dirichlet's Unit Theorem}; see e.g.~\cite[Appendix B]{Stewart} for a proof.

\begin{theorem}[Dirichlet's Unit Theorem]
\label{thm:DirichletUnit}
Given a square free integer $d$ and $\omega$ as in~\eqref{eq:Omega},
\begin{enumerate}
\item If $d > 0$, then $\Z[\omega]^\times \cong \{\pm 1\} \times \Z$. 
In particular, there exists a unit $u \in \Z[\omega]^\times$, called a \emph{fundamental unit}, such that every unit in $\Z[\omega]$ is of the form $\pm u^i$ for some $i \in \Z$.
\item If $d < 0$, then $\Z[\omega]^\times$ is finite. More precisely:
\begin{enumerate}
\item If $d = -1$, then there are four units, given by $\pm 1$ and $\pm i$. 
\item If $d = -3$, then there are six units, given by $\pm 1$ and $(\pm 1 \pm \sqrt{-3})/2$. 
\item If $d < -3$, then the only units are $\pm 1$.
\end{enumerate}
\end{enumerate}
\end{theorem}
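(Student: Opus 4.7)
The plan is to separate the proof by the sign of $d$, using the norm map $N\colon \Z[\omega]\to \Z$ given by $N(\alpha)=\alpha\bar\alpha$, where $\bar\omega$ is the Galois conjugate of $\omega$. In coordinates this is $N(a+b\sqrt d)=a^2-db^2$ when $d\equiv 2,3\pmod 4$ and $N(a+b\omega)=a^2+ab+\tfrac{1-d}{4}b^2$ when $d\equiv 1\pmod 4$. The basic fact, immediate from multiplicativity, is that $u\in\Z[\omega]^\times$ if and only if $N(u)=\pm 1$. This reduces the whole theorem to understanding the integer solutions of the norm equation.

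For the imaginary case $d<0$, the key observation is that $N$ is a positive definite integral quadratic form, so the solutions to $N(\alpha)=1$ form a finite set (and $N(\alpha)=-1$ has no solutions). I would simply enumerate these solutions by a direct bound on $|a|$ and $|b|$, which is exactly what produces the three subcases: for $d<-3$ the only solutions are $a=\pm 1,\ b=0$; for $d=-1$ we pick up $\alpha=\pm i$; for $d=-3$ we pick up the sixth roots of unity $(\pm 1\pm\sqrt{-3})/2$.

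For the real case $d>0$ the argument is in two steps. First, show that $\Z[\omega]^\times$ contains some unit of infinite order; this is the classical hard step. I would do it via the geometry of numbers: embed $\Z[\omega]\hookrightarrow \R^2$ by the two real embeddings $\sigma_1,\sigma_2$ of $\Q(\sqrt d)$, obtaining a lattice of covolume $\sqrt{|\mathrm{disc}|}$. For each $T>0$, Minkowski's convex body theorem applied to the region $|\sigma_1(\alpha)|\le T,\ |\sigma_2(\alpha)|\le C/T$ (with $C$ large enough) yields a nonzero $\alpha_T$ with $|N(\alpha_T)|\le C$. Letting $T\to\infty$, infinitely many such $\alpha_T$ must share a common norm, and among these infinitely many must be associate (since only finitely many ideals have a given norm, by the standard finiteness result for ideals of bounded norm in a Dedekind domain, recalled in Theorem~\ref{thm:DedekindClass}). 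The ratio of two associate elements is then a unit, and by comparing absolute values under $\sigma_1$ we can guarantee it is not $\pm 1$, so it has infinite order.

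Second, show the rank is exactly one and the torsion is $\{\pm 1\}$. For this I would use the logarithmic embedding $L\colon \Z[\omega]^\times\to\R^2$, $L(u)=(\log|\sigma_1(u)|,\log|\sigma_2(u)|)$. The kernel of $L$ consists of units whose conjugates both have absolute value $1$, which are roots of unity in $\Q(\sqrt d)\subset\R$, hence $\pm 1$. Since every unit has norm $\pm 1$, the image of $L$ lies in the line $x+y=0$. A standard argument shows $L(\Z[\omega]^\times)$ is discrete: the preimage of any bounded region consists of elements whose conjugates are all bounded, hence finitely many by Northcott-type reasoning. A discrete subgroup of a line is either trivial or $\cong\Z$, and the previous step rules out triviality, so $L(\Z[\omega]^\times)\cong\Z$ and the short exact sequence $1\to\{\pm 1\}\to\Z[\omega]^\times\to\Z\to 1$ splits. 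The generator of the $\Z$ factor (lifted to $\Z[\omega]^\times$) is then the fundamental unit $u$, and every unit is $\pm u^i$.

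The main obstacle is the existence step in the real case: producing a nontrivial unit requires nontrivial geometry of numbers (or equivalently a pigeonhole argument on continued fraction convergents to $\sqrt d$), and this is the only place the argument cannot be reduced to bookkeeping with the norm form.
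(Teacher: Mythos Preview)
Your proof is correct and follows the standard route (norm form analysis in the imaginary case; Minkowski's theorem plus the logarithmic embedding in the real case). Note, however, that the paper does not actually prove this theorem: it simply records the statement and cites \cite[Appendix~B]{Stewart} for a proof. So there is nothing to compare against beyond observing that your argument is the classical one you would find in such a reference.

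One small quibble: in the existence step for $d>0$ you invoke Theorem~\ref{thm:DedekindClass} for the fact that only finitely many ideals have a given norm. That theorem, as stated in the paper, records the Dedekind property and finiteness of the class number, neither of which is literally the statement you need. The fact you are using (finitely many ideals of bounded norm in $\Z[\omega]$) is more elementary and is usually proved \emph{en route} to class number finiteness rather than as a consequence of it; you could instead argue directly that $|\Z[\omega]/\mathfrak{a}| = N(\mathfrak{a})$ bounds the index, hence the possibilities for $\mathfrak{a}$. This is a citation issue, not a mathematical gap.
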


In this paper, we will be most concerned with the case where $d \equiv 1$ mod $4$.
In this setting, the next result describes the factorization of ideals $(p)$ in $\Z[\omega]$ for $p$ a prime in $\Z$. See e.g~the discussion surrounding \cite[Proposition 8.5]{Neukirch} for a proof.

\begin{theorem}[Splitting Criterion]
\label{thm:RamifyingSplittingInert}
Assume~$d \equiv 1 \bmod 4$ is square free, and let $\omega$ be as in~\eqref{eq:Omega}. Let~$p$ be an odd prime in~$\Z$. Then~$(p)$ factors into prime ideals in~$\Z[\omega]$ as follows:
\begin{enumerate}
\item~$(p)$ prime factors into the form~$\mathfrak{p}^2$ if and only if~$p$ divides~$d$.
\item~$(p)$ prime factors into the form~$\mathfrak{p}_1 \mathfrak{p_2}$ if and only if~$p$ does not divide~$d$ and~$d$ is a quadratic residue mod~$p$.
\item~$(p)$ remains a prime ideal in~$\Z[\omega]$ if and only if~$p$ does not divide~$d$ and~$d$ is not a quadratic residue mod~$p$.
\end{enumerate}
In the special case~$p = 2$, the quadratic residue conditions for possibilities~$(2)$ and~$(3)$ are replaced by the conditions~$d \equiv 1 \bmod 8$ and~$d \equiv 5 \bmod 8$, respectively.
\end{theorem}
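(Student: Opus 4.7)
The plan is to reduce the question to factoring a quadratic polynomial modulo $p$, via the Kummer-Dedekind theorem. Since $d \equiv 1 \bmod 4$, we have $\omega = (1+\sqrt{d})/2$, and the minimal polynomial of $\omega$ over $\Z$ is
\[
f(x) = x^2 - x + \tfrac{1-d}{4},
\]
with discriminant $d$. Because $\Z[\omega]$ is the full ring of integers of $\Q(\sqrt{d})$ (i.e.\ the maximal order), the Kummer-Dedekind theorem applies unconditionally: for every rational prime $p$, the factorization of the ideal $(p)$ in $\Z[\omega]$ mirrors the factorization of $f(x)$ in $\mathbb{F}_p[x]$, with each irreducible factor $\bar g_i(x)$ of $\bar f(x)$ corresponding to the prime ideal $(p,\, g_i(\omega))$ of $\Z[\omega]$, and with multiplicities preserved.

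For an odd prime $p$, I would argue by cases on $f \bmod p$ using the discriminant. Completing the square (legal since $2$ is a unit mod $p$), the polynomial $f(x)$ factors over $\mathbb{F}_p$ according to whether $d$ is zero, a nonzero square, or a non-square modulo $p$. Concretely: $f$ has a double root iff $p \mid d$, two distinct roots iff $d$ is a nonzero quadratic residue mod $p$, and is irreducible iff $d$ is a non-residue. Invoking Kummer-Dedekind then gives $(p)=\mathfrak{p}^2$, $(p)=\mathfrak{p}_1\mathfrak{p}_2$, or $(p)$ prime, respectively, matching items (1)-(3) of the theorem.

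The case $p=2$ requires a small separate computation, since $2$ is not invertible in $\mathbb{F}_2$ and the completing-the-square argument breaks. Here $d$ is odd (as $d \equiv 1 \bmod 4$), so $2 \nmid d$ and case (1) is vacuous; it remains to distinguish (2) and (3). One computes the constant term of $f$ modulo $2$: if $d \equiv 1 \bmod 8$ then $(1-d)/4$ is even, so $\bar f(x)=x(x-1)$ and $(2)$ splits, while if $d \equiv 5 \bmod 8$ then $(1-d)/4$ is odd, so $\bar f(x)=x^2+x+1$ is irreducible over $\mathbb{F}_2$ and $(2)$ is inert.

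I do not anticipate a serious obstacle: the only mildly subtle point is justifying the use of Kummer-Dedekind in its sharp form, which relies on $\Z[\omega]$ being the maximal order; this is precisely why $\omega$ was defined piecewise in~\eqref{eq:Omega}, so for $d \equiv 1 \bmod 4$ this hypothesis is automatic. Everything else reduces to routine manipulation of a quadratic over $\mathbb{F}_p$.
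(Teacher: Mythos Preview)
Your argument via Kummer--Dedekind is correct and is the standard textbook proof of this fact. The paper does not supply its own proof of this statement; it simply cites \cite[Proposition~8.5]{Neukirch}, which is precisely the Kummer--Dedekind approach you outline (factor the minimal polynomial of $\omega$ modulo $p$ and read off the ideal factorization). So there is nothing to compare: your proposal is essentially the proof behind the cited reference.
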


The above possibilities are referred to as~$p$ \emph{ramifying, splitting,} and \emph{remaining inert}.


\subsection{The units of $\Z[1/n, \omega]$}
\label{sub:UnitsZ[omega,1/n]}


While Dirichlet's unit theorem describes the group of units of $\Z[\omega]$, in Section~\ref{sec:UnitaryUnits} we will be interested in the localization of $\Z[\omega]$ at a particular integer $n$. This section is thus devoted to describing the units of $\Z[1/n, \omega]$ for any nonzero $n \in \Z$.
The answer will follow promptly from Dirichlet's Unit theorem once we recall the notion of the saturation of a multiplicatively closed subset.
%
%
%

\begin{definition}
\label{def:Saturation}
The \textit{saturation} of a multiplicatively closed subset $S \subset R$ of an integral domain is defined by
\[
\widetilde{S} = \{ x \in R \text{ such that } x \text{ divides } s \text{ for some } s \in S \}.
\]
\end{definition}



The next lemma is essentially the observation that if we invert~$S$, then every element of~$S$ gives us a new unit in~$S^{-1}R$ and, more generally, every divisor of every element of~$S$ becomes a unit in~$S^{-1}R$. 

\begin{lemma}
\label{lem:Saturation}
Given a multiplicatively closed subset $S \subset R$ of an integral domain, the group of units of~$S^{-1}R$ is multiplicatively generated by the saturation of~$S$.
\end{lemma}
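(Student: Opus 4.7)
The plan is to pass to the fraction field $K$ of $R$, in which both $R$ and $S^{-1}R$ embed canonically. I will work inside $K$ throughout, identifying elements of $\widetilde{S} \subset R$ with their images in $S^{-1}R$ via $r \mapsto r/1$.

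The first step is to verify that every element of $\widetilde{S}$ is a unit in $S^{-1}R$. This is essentially the defining property of localization: if $x \in R$ divides some $s \in S$, say $xy = s$ with $y \in R$, then $x \cdot (y/s) = 1$ in $S^{-1}R$. Consequently the subgroup $G$ of $(S^{-1}R)^\times$ generated (together with inverses) by the image of $\widetilde{S}$ is well-defined, and the task is to show $G = (S^{-1}R)^\times$.

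The second step is the reverse inclusion. Given any unit $u \in S^{-1}R$, write $u = a/s$ with $a \in R$ and $s \in S$. Its inverse has the form $b/s'$ with $b \in R$ and $s' \in S$, and the equation $u \cdot u^{-1} = 1$ in $S^{-1}R$ translates, using the integral domain hypothesis, to $ab = ss'$ in $R$. Thus $a$ divides the element $ss' \in S$, so $a \in \widetilde{S}$. Since also $s \in S \subseteq \widetilde{S}$, the decomposition $u = (a/1) \cdot (s/1)^{-1}$ exhibits $u$ as a product of an element of $\widetilde{S}$ and the inverse of one, so $u \in G$.

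The only subtlety worth flagging is the use of the integral domain hypothesis, which ensures that the equality $(a/s)(b/s') = 1/1$ in $S^{-1}R$ actually forces $ab = ss'$ on the nose in $R$, rather than merely up to multiplication by some element of $S$. With this in hand the proof is a direct unraveling of the definitions and presents no serious obstacle.
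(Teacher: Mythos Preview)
Your proof is correct and follows essentially the same argument as the paper's: both show first that elements of $\widetilde{S}$ become units in $S^{-1}R$, and then that any unit $a/s$ has numerator $a \in \widetilde{S}$ via the relation $ab = ss'$ coming from its inverse. Your explicit mention of the integral domain hypothesis to justify $ab = ss'$ on the nose is a nice touch that the paper leaves implicit.
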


\begin{proof}
We first claim that the elements of~$\smash{\widetilde{S}}$ are units in~$S^{-1}R$. Indeed, let~$x_1 \in \widetilde{S}$, so that~$x_1 x_2 = s$ for some~$s \in S$. Then~$x_1$ and~$x_2$ are units in~$S^{-1} R$, with inverses~$x_2/s$ and~$x_1/s$, respectively. Conversely, suppose that~$r_1/s_1$ is a unit in~$S^{-1}R$. Then we have that~$r_1/s_1 \cdot r_2/s_2 = 1$ for some~$r_2/s_2 \in S^{-1}R$. But then~$r_1 r_2 = s_1 s_2$, whereupon~$r_1$ and~$r_2$ are elements of~$\smash{\widetilde{S}}$. 
\end{proof}

The ring~$\Z[1/n, \omega]$ corresponds to $S^{-1}R$ in the case $R=\Z[\omega]$ and $S=\{n^k\}$. Lemma~\ref{lem:Saturation} thus gives a set of generators for~$\Z[1/n, \omega]^\times$, consisting of the set of units of~$\Z[\omega]$ together with the saturation (in~$\Z[\omega]$) of the set~$\{n^k\}$. Since the former is given by Dirichlet's unit theorem, the principal difficulty is to understand the latter. If $\Z[\omega]$ is a UFD, then the saturation of $\{n^k\}$ may be computed by factoring $n$ in $\Z[\omega]$ and taking all possible products of these prime factors. In general, however, $\Z[\omega]$ is not a UFD and so the saturation of $\{n^k\}$ cannot be computed purely from the factorization of $n$. 

Instead, we have the following criterion based on the factorization of the \textit{ideal} $(n)$:

\begin{lemma}
\label{lem:SaturationCriterion}
Let $R$ be a Dedekind domain and fix $n \in R$. Then $x \in R$ is in the saturation of~$\{n^k\}$ if and only if $(x)$ is a product of prime ideals occurring in the factorization of~$(n)$.
\end{lemma}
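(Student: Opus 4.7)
The plan is to use unique factorization of ideals in a Dedekind domain, together with the standard translation between divisibility of elements and divisibility (i.e., reverse containment) of principal ideals. Write $(n) = \mathfrak{p}_1^{a_1}\cdots \mathfrak{p}_r^{a_r}$ for the prime factorization of $(n)$; this is the central object of both implications. The whole argument is a short ideal-theoretic manipulation, so there is no real obstacle, only bookkeeping.

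For the forward direction, suppose $x$ lies in the saturation of $\{n^k\}$, so $xy = n^k$ for some $y\in R$ and some $k \geq 0$. Taking ideals gives $(x)(y) = (n)^k = \mathfrak{p}_1^{ka_1}\cdots \mathfrak{p}_r^{ka_r}$. Since prime factorization of ideals in a Dedekind domain is unique, the prime factorization of $(x)$ can only involve ideals in $\{\mathfrak{p}_1,\ldots,\mathfrak{p}_r\}$, which is precisely the claim.

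For the backward direction, suppose $(x) = \mathfrak{p}_1^{b_1}\cdots \mathfrak{p}_r^{b_r}$ with each $b_i \geq 0$. Choose $k$ large enough that $ka_i \geq b_i$ for all $i$. Then $(n)^k = \mathfrak{p}_1^{ka_1}\cdots \mathfrak{p}_r^{ka_r}$ is divisible, as an ideal, by $(x)$, which in a Dedekind domain means $(n)^k \subseteq (x)$. In particular $n^k \in (x)$, so $x$ divides $n^k$ and lies in the saturation of $\{n^k\}$.

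The only point where Dedekind is really used is in the equivalence ``divides $\iff$ contains'' for ideals and the unique factorization into prime ideals; both are classical and quoted as Theorem~\ref{thm:DedekindClass}(1) in the text, so no additional input is required.
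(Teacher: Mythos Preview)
Your proof is correct and essentially identical to the paper's own argument: both directions use unique prime factorization of ideals in a Dedekind domain together with the equivalence between element divisibility and ideal containment. The only difference is cosmetic---you write out the exponents explicitly where the paper says ``for $k$ sufficiently large''.
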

\begin{proof}
Suppose that~$x$ is in the saturation of~$\{n^k\}$. Then~$x$ divides some element~$n^k$; this implies that~$(x)$ divides~$(n^k) = (n)^k$ as an ideal in~$R$.
 By unique prime factorization of ideals, this shows that~$(x)$ must be a product of prime ideals occurring in the factorization of~$(n)$. Conversely, suppose that~$(x)$ is a product of prime ideals occurring in the factorization of~$(n)$. Clearly, for~$k$ sufficiently large, the factorization of~$(n)^k$ then contains the factorization of~$(x)$. Hence~$(x)$ divides~$(n^k)$ as an ideal, which implies that~$x$ divides~$n^k$ as an element of~$R$.
\end{proof}

\begin{remark}
\label{rem:Recipe}
Lemma~\ref{lem:SaturationCriterion} gives a recipe for understanding the saturation of~$\{n^k\}$ in $\Z[\omega]$, as follows. 
\begin{enumerate}
\item Factor~$(n)$ into prime ideals in~$\Z[\omega]$. 
\item Consider all possible products of prime ideals occurring in this factorization and determine which of these products are principal. 
\item For each such principal ideal, choose a generating element. 
Up to multiplication by units in~$\Z[\omega]$, this procedure gives all elements in the saturation of~$\{n^k\}$.
\end{enumerate}
\end{remark}

In general, the algorithm of Remark~\ref{rem:Recipe} is difficult to carry out explicitly, as the second step involves computing products of prime ideals and deciding which are principal. Note, however, that due to Theorem~\ref{thm:DedekindClass} we have a particular set of such products which are always principal. Let
$$(n) = \mathfrak{p}_1^{\alpha_1} \mathfrak{p}_2^{\alpha_2}  \cdots \mathfrak{p}_r^{\alpha_r}$$
be the factorization of~$(n)$ into prime ideals in~$\Z[\omega]$. If the class number of~$\Z[\omega]$ is $h$, then by Theorem~\ref{thm:DedekindClass} each $\mathfrak{p}_i^h$ is principal. Write
\[
\mathfrak{p}_i^h = (x_i).
\]
By Lemma~\ref{lem:SaturationCriterion}, $x_i$ is in the saturation of $\{n^k\}$ and is hence a unit in $\Z[1/n, \omega]$. 
We thus obtain a subgroup of $\Z[1/n, \omega]^\times$ generated by the set of $x_i$ together with $\Z[\omega]^\times$. 
The next proposition computes the rank of this subgroup and uses it to calculate the rank of $\Z[1/n, \omega]^\times$.

\begin{proposition}
\label{prop:NotUFD}
Let $d$ be square free and $\omega$ as in~\eqref{eq:Omega}. Let $n$ be any nonzero integer and
$$(n) = \mathfrak{p}_1^{\alpha_1} \mathfrak{p}_2^{\alpha_2}  \cdots \mathfrak{p}_r^{\alpha_r}$$
be the factorization of~$(n)$ into prime ideals in~$\Z[\omega]$. Denote the class number of~$\Z[\omega]$ by $h$, so that~$\mathfrak{p}_i^h = (x_i)$ is principal for each~$i$.
Then the following hold: 
\begin{enumerate}
\item The subgroup of $\Z[1/n, \omega]^\times$ generated by the $x_i$ and $\Z[\omega]^\times$ has presentation
\begin{equation}\label{eq:linearindependence}
\Z[\omega]^\times \times \langle x_1, x_2, \ldots, x_r \rangle.
\end{equation}
\item $\rk(\Z[1/n, \omega]^\times) = \operatorname{rk}(\Z[\omega]^\times \times \langle x_1, x_2, \ldots, x_r \rangle)$. 
\end{enumerate}
In particular, 
\[
\operatorname{rk}(\Z[1/n, \omega]^\times) = \operatorname{rk}(\Z[\omega]^\times) + r 
= 
\begin{cases} 
1 + r & \text{ if } d > 0 \\
r & \text{ if } d < 0.
\end{cases}
\]
\end{proposition}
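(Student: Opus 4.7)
My plan is to prove (1) and (2) separately, in each case exploiting the unique prime factorization of ideals in the Dedekind domain $\Z[\omega]$ (Theorem~\ref{thm:DedekindClass}), and then combine the resulting rank formula with Dirichlet's unit theorem (Theorem~\ref{thm:DirichletUnit}) to deduce the final statement.

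For (1), I would consider the natural homomorphism $\Z[\omega]^\times \times \Z^r \to \Z[1/n,\omega]^\times$ sending $(u, e_1, \ldots, e_r)$ to $u \cdot x_1^{e_1} \cdots x_r^{e_r}$, and check that it is injective. Suppose $u \prod_i x_i^{e_i} = 1$. Passing to the fractional ideals generated by each side in $\Z[\omega]$ (equivalently, in the field of fractions $\Q(\sqrt{d})$), and using $(x_i) = \mathfrak{p}_i^h$ together with $(u) = (1)$, this becomes $\prod_i \mathfrak{p}_i^{h e_i} = (1)$ as a fractional ideal. Since the $\mathfrak{p}_i$ are distinct prime ideals in a Dedekind domain, unique factorization of fractional ideals forces $h e_i = 0$, hence $e_i = 0$ for every $i$; then $u = 1$ as well. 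This yields the direct product decomposition \eqref{eq:linearindependence}.

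For (2), it suffices to show that the quotient group
\[
Q := \Z[1/n,\omega]^\times \big/ \bigl(\Z[\omega]^\times \times \langle x_1, \ldots, x_r \rangle\bigr)
\]
is torsion, since then the inclusion induces an equality of ranks. The key observation is that for any $y \in \Z[1/n,\omega]^\times$, the fractional ideal $(y)$ of $\Z[\omega]$ (defined by clearing denominators: $(y) = (n^k y) \cdot (n^k)^{-1}$ for $k$ large) is necessarily of the form $\prod_i \mathfrak{p}_i^{f_i}$ for some $f_i \in \Z$. Indeed, Lemmas~\ref{lem:Saturation} and~\ref{lem:SaturationCriterion} identify $\Z[1/n,\omega]^\times$ with the group generated by $\Z[\omega]^\times$ together with those elements of $\Z[\omega]$ whose principal ideal factors only through the $\mathfrak{p}_i$, and taking ratios preserves this property. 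Raising to the $h$-th power then gives $(y^h) = \prod_i (x_i)^{f_i}$, so $y^h \cdot \prod_i x_i^{-f_i}$ has trivial fractional ideal and is therefore a unit of $\Z[\omega]$. Hence $y^h$ lies in the subgroup $\Z[\omega]^\times \times \langle x_1, \ldots, x_r\rangle$, so $Q$ is killed by $h$.

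Combining (1) and (2), I would conclude with $\rk(\Z[1/n,\omega]^\times) = \rk(\Z[\omega]^\times) + r$, and the case distinction between $d>0$ and $d<0$ follows immediately from Dirichlet's unit theorem: $\Z[\omega]^\times$ has rank $1$ in the real quadratic case and rank $0$ (it is finite) in the imaginary quadratic case. The main obstacle I anticipate is the bookkeeping in step (2), specifically verifying that an arbitrary $y \in \Z[1/n,\omega]^\times$ really has a fractional ideal supported only on the $\mathfrak{p}_i$; this needs a careful application of the saturation description of units combined with unique factorization of ideals to rule out any other prime ideal appearing in $(y)$ even in a canceling pair between numerator and denominator.
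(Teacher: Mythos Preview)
Your proposal is correct and follows essentially the same approach as the paper: for (1) both you and the paper pass to (fractional) ideals and invoke unique factorization to kill any putative relation, and for (2) both show that the $h$-th power of any unit lies in the subgroup, using the description of $\Z[1/n,\omega]^\times$ via the saturation of $\{n^k\}$ (Lemmas~\ref{lem:Saturation} and~\ref{lem:SaturationCriterion}). The ``obstacle'' you flag at the end is not a real issue: once you reduce to $y$ in the saturation of $\{n^k\}$ (which suffices since these generate), the ideal $(y)$ divides $(n^k)$ and hence is automatically supported on the $\mathfrak{p}_i$ by unique factorization of ideals, so no cancelling primes can appear.
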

\begin{proof}
To establish the first claim, we check that the~$x_i$ are multiplicatively linearly independent and that there are no nontrivial relations between the $x_i$ and $\Z[\omega]^\times$. Suppose we had a linear relation amongst the~$x_i$ and $u \in \Z[\omega]^\times$, say
\[
u x_1^{a_1} \cdots x_q^{a_q} = x_{q+1}^{a_{q+1}} \cdots x_r^{a_r}
\]
with all~$a_i \geq 0$. Then 
\[
\mathfrak{p}_1^{h a_1} \cdots \mathfrak{p}_q^{h a_q} = \mathfrak{p}_{q+1}^{h a_{q+1}} \cdots \mathfrak{p}_r^{h a_r},
\]
contradicting unique prime factorization of ideals. This shows that the $x_i$ are linearly independent, even modulo multiplication by any unit in $\Z[\omega]$.

We now prove the second claim by relying on the following fact: if $H \leq G$ is a subgroup of an abelian group $G$ and there is an integer $n$ such that $x^n \in H$ for all $x \in  G$, then $H$ and $G$ have the same rank.
We will therefore show that
that if~$x \in \Z[1/n, \omega]^\times$, then~$x^h$ lies in \eqref{eq:linearindependence}, where $h$ is the class number of~$\Z[\omega]$. 
Without loss of generality, we may assume that~$x$ is an element of the saturation of~$\{n^k\}$ in~$\Z[\omega]$, as these generate $\Z[1/n, \omega]^\times$. Then~$x$ divides some~$n^k$, which implies that~$(x)$ divides~$(n^k)$ as an ideal in~$\Z[\omega]$. But by prime factorization of ideals, this means that~$(x)$ is a product of powers of~$\mathfrak{p}_1, \ldots, \mathfrak{p}_r$. 
Raising both sides to the~$h$-th power, we see that~$(x)^h=(x^h)$ is a product of powers of the~$(x_i)$. 
Hence (up to multiplication by a unit in~$\Z[\omega]$) we have that~$x^h$ is a product of powers of the~$x_i$, as desired.

The final claim of the proposition follows from the observation that
\[
\operatorname{rk}(\Z[\omega]^\times \times \langle x_1, x_2, \ldots, x_r \rangle) = \operatorname{rk}(\Z[\omega]^\times) + r = 
\begin{cases} 
1 + r & \text{ if } d > 0 \\
r & \text{ if } d < 0.
\end{cases}
\]
Here, we have used the fact that the rank of $\Z[\omega]^\times$ is zero or one depending on whether $d < 0$ or $d > 0$; recall Theorem~\ref{thm:DirichletUnit}.
\end{proof}

\section{Enumerating unitary units}
\label{sec:UnitaryUnits}

We are now ready to study the group $U(\Z[t^{\pm 1}]/(\Delta_K))$ when the Alexander polynomial~$\Delta_K$ is quadratic.
We structure our casework according to the possible Alexander polynomials which arise.
In more detail, we set
$$ \Delta_n = nt^2 - (2n + 1) t + n~$$
since, as~$n$ varies over~$\Z$, this encompasses (up to multiplication by~$\pm t^k, k \in \Z$) all possible quadratic Alexander polynomials.
For each of the above Alexander polynomials, the number of~$\Z$-disks for any corresponding knot is deduced by considering the ring 
\[
\Lambda_n = \Z[t^{\pm 1}]/(\Delta_n).
\]
More precisely,  we saw in Theorem~\ref{thm:DiscsWithoutCPP22Intro} that the number of~$\Z$-disks in~$(\C P^2)^\circ$ for a knot with polynomial~$\Delta_n$ is in bijection with $U_n/\{t^k\}_{k \in \Z}$, where $U_n$ is the group of \textit{unitary units} of~$\Lambda_n$:
$$ U(\Lambda_n)=\lbrace p \in \Lambda_n \mid p\overline{p}=1 \rbrace.$$
Here, recall that $\overline{p}$ is the conjugate of $p$ formed by precomposing with the involution $t \mapsto t^{-1}$.

We will primarily be interested in determining whether~$U_n/\{t^k\}_{k \in \Z}$ has zero or positive rank. 
In the case that the group is finite, we will make an explicit count of the number of unitary units; in the case that the group is infinite, we compute the rank.
The answer will involve
$$ \Omega(n):=\# \lbrace \text{positive primes dividing } n \rbrace.~$$
The next theorem is our main number-theoretic result and is Theorem~\ref{thm:NumberTheoryIntro} from the introduction.
\begin{customthm}{\ref{thm:NumberTheoryIntro}}
\label{thm:MainNumberTheory}
For every~$n \in \Z$, the group~$U(\Lambda_n)/\{t^k\}_{k \in \Z}$ can be described as follows. 
\begin{enumerate}
\item The group~$U(\Lambda_n)/\{t^k\}_{k \in \Z}$ is finite precisely when~$n = 2, 1, 0, -1$, or~$-p^k$ for a prime~$p$ and
$$
U(\Lambda_n)/\{t^k\}_{k \in \Z} 
\cong \begin{cases}
\lbrace 1 \rbrace & \quad \text{ if $n=-1, 0$},  \\
\Z/2\Z & \quad \text{ if $n=1,2$ or $n=-p^k$ with $k$ odd},  \\
\Z/4\Z & \quad \text{ if $n=-p^k$ with $k$ even}. \\
\end{cases}
$$
\item In all other cases, ~$U(\Lambda_n)/\{t^k\}_{k \in \Z}$ is infinite and
$$
\rk \left( U(\Lambda_n)/\{t^k\}_{k \in \Z} \right)
=\begin{cases}
\Omega(n) & \quad \text{ if $\Delta_n$ is irreducible and $n>0$},  \\
\Omega(n)-1 & \quad \text{ if $\Delta_n$ is irreducible and $n<0$ or if $\Delta_n$ is reducible.}
\end{cases}
$$
\end{enumerate}
\end{customthm}
\begin{proof}
Proposition~\ref{thm:IrreducibleRank} is concerned the cases where $\Delta_n$ is irreducible and the count is infinite. 
Propositions~\ref{prop:n=-101} and~\ref{prop:UnitCountSquareFree} carry out the count when $\Delta_n$ is irreducible and the group is finite $(n=-1,0,1,-p^k$ with $p$ prime).
Proposition~\ref{prop:ReducibleRank} considers the case where $\Delta_n$ is reducible leaving only the case where~$\Delta_n$ is reducible and the group is finite ($n=2$) which is studied in Proposition~\ref{prop:n=2}.
\end{proof}

\begin{remark}
We collect some remarks on this theorem.
\label{rem:Reducibility}
\begin{itemize}
\item The polynomial $\Delta_n$ is reducible if and only if the discriminant~$4n + 1$ of~$\Delta_n$ is a perfect square. 
We also note that all~$\Delta_n$ with~$n < 0$ are irreducible and that for~$n > 0$, the polynomial~$\Delta_n$ is reducible precisely when~$n = m(m+1)$ for some~$m$. 
\item  The cases in which~$(2)$ give a rank less than or equal to zero correspond precisely to the cases in~$(1)$. 
We use the convention that strictly negative rank should be interpreted as zero rank; we write this for simplicity of the theorem statement.
\end{itemize}
\end{remark}

Before embarking on the proof of Theorem~\ref{thm:MainNumberTheory}, we give some basic examples of distinct elements in $U(\Lambda_n)/\{t^k\}_{k \in \Z}$ for various values of $n$, both for concreteness and because of their relevance to topological applications.
We first show that $U(\Lambda_n)/\{t^k\}_{k \in \Z}$ is at least of cardinality two except in the cases $n = 0$ and $n = -1$.

\begin{proposition}\label{prop:pmdistinct}
The classes of $+1$ and $-1$ in $U(\Lambda_n)/\{t^k\}_{k \in \Z}$ are distinct except in the cases $n = 0$ and $n = -1$.
\end{proposition}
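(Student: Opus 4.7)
The plan is to translate the problem into a divisibility statement and then apply an algebraic-integer argument. Observe that the classes of $+1$ and $-1$ coincide in $U(\Lambda_n)/\{t^k\}_{k \in \Z}$ precisely when $-1 = t^k$ holds in $\Lambda_n$ for some $k \in \Z$, equivalently when $\Delta_n(t)$ divides $t^k + 1$ in $\Z[t^{\pm 1}]$. I would first dispatch the two exceptional values directly. When $n = 0$, the polynomial $\Delta_0 = -1$ makes $\Lambda_0$ the zero ring, so the classes collapse trivially. When $n = -1$, one has $\Delta_{-1} \doteq t^2 - t + 1$, and the identity $(t+1)(t^2 - t + 1) = t^3 + 1$ shows that $t^3 = -1$ in $\Lambda_{-1}$.

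For the converse direction, I would fix $n \notin \{0,-1\}$ and argue by contradiction, assuming that $\Delta_n(t)$ divides $t^k + 1$ in $\Z[t^{\pm 1}]$ for some $k \in \Z$. Because $n \neq 0$, the roots $\alpha, \beta \in \C$ of $\Delta_n$ (equivalently, of $nt^2 - (2n+1)t + n$) are nonzero, so the substitution $t \mapsto \alpha$ extends to a well-defined ring homomorphism $\Lambda_n \to \C$. The divisibility hypothesis then yields $\alpha^k = -1$, and similarly $\beta^k = -1$; thus both $\alpha$ and $\beta$ are roots of unity. Vieta's formulas give
\[
\alpha + \beta = \frac{2n+1}{n} = 2 + \frac{1}{n}.
\]
Sums of roots of unity are algebraic integers, and a rational algebraic integer must belong to $\Z$; therefore $n \mid 1$, forcing $n \in \{1,-1\}$. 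Having already excluded $n = -1$, only $n = 1$ remains, and this I would rule out by direct computation: the roots of $\Delta_1 = t - 3 + t^{-1}$ are $(3 \pm \sqrt{5})/2$, which are real numbers distinct from $\pm 1$ and hence not roots of unity, a contradiction.

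The only subtle point in this plan is justifying the substitution $t \mapsto \alpha$, which requires both $\alpha \neq 0$ (so that $t^{-1} \mapsto \alpha^{-1}$ is defined) and $\Delta_n(\alpha) = 0$ (for descent to the quotient). Both hold whenever $n \neq 0$, since the constant term of $nt^2 - (2n+1)t + n$ is $n \neq 0$, so the product of the roots equals $1$ and in particular neither root vanishes. Every remaining step is a routine check.
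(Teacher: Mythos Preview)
Your proof is correct and follows essentially the same approach as the paper: both reduce to the divisibility statement $\Delta_n \mid t^k + 1$, handle $n=0,-1$ directly, and then rule out all other $n$ by observing that the roots of $\Delta_n$ would have to be roots of unity while for $n=1$ the roots $(3\pm\sqrt{5})/2$ are not. The only cosmetic difference is that where you invoke ``a rational algebraic integer lies in $\Z$'' to force $n\mid 1$, the paper simply notes that the leading coefficient of $\Delta_n$ must divide that of $t^k+1$; these are two phrasings of the same fact.
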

\begin{proof}
Suppose that the classes of $+1$ and $-1$ in $U(\Lambda_n)/\{t^k\}_{k \in \Z}$ coincide. Multiplying through by powers of $t$ as necessary, we then have the congruence of polynomials
\[
t^a \cdot (+1) \equiv t^b \cdot (- 1) \bmod \Delta_n.
\]
Clearly, this forces the leading coefficient of $\Delta_n$ to be $\pm 1$. Moreover, the roots of $\Delta_n$ must lie on the unit circle since $\Delta_n$ divides $t^a + t^b$; this rules out the case $n = 1$. On the other hand, when $n = -1$ it is indeed the case that
\[
t^3 \equiv -1 \bmod \Delta_{-1},
\]
since $\Delta_{-1} = - t^2 + t - 1$ divides $t^3 + 1$. In the case $n = 0$, the ring $\Lambda_n = \Z[t^{\pm 1}]/(1)$ is trivial and the claim is evident.
\end{proof}

\begin{remark}\label{rem:pmextension}
Using Proposition~\ref{prop:pmdistinct} and Theorem~\ref{thm:MainNumberTheory}, it is straightforward to determine the group 
\[
U(\Lambda_n)/\{\pm t^k\}_{k \in \Z}
\]
discussed in Remark~\ref{rem:NumberTheoryEquivalence}. Explicitly, consider the subgroup $\{\pm 1\}$ of $U(\Lambda_n)/\{t^k\}_{k \in \Z}$. If $n = -1$, then $+1$ and $-1$ actually constitute the same class and hence this subgroup is trivial; in this case $U(\Lambda_n)/\{\pm t^k\}_{k \in \Z} = U(\Lambda_n)/\{t^k\}_{k \in \Z}$. Otherwise, we have that $U(\Lambda_n)/\{\pm t^k\}_{k \in \Z}$ is a quotient of $U(\Lambda_n)/\{t^k\}_{k \in \Z}$ by a subgroup of order two. The groups enumerated in Theorem~\ref{thm:MainNumberTheory} then necessarily give those listed in Remark~\ref{rem:NumberTheoryEquivalence}. In the case of positive rank, it is clear that the rank remains unchanged by this further quotient. 
\end{remark}

\begin{remark}\label{rem:generalization}
It is not difficult to generalize Proposition~\ref{prop:pmdistinct} to other Alexander polynomials. Indeed, let $\Delta$ be any (not-necessarily-quadratic) Alexander polynomial and $\Lambda := \Z[t^{\pm 1}]/(\Delta)$. Then the classes of $+1$ and $-1$ in $U(\Lambda)/\{t^k\}_{k \in \Z}$ coincide if and only if $\Delta$ (up to multiplication by~$t^k$) divides $t^\ell + 1$ for some $\ell$.
\end{remark}

The following presents a slightly less trivial set of distinct elements in the case that $n = -k^2$ for some $k > 1$.

\begin{proposition}\label{prop:fourdistinct}
Let $n = - k^2$ for $k > 1$. Then the classes of
\[
\{\pm 1, \pm k(t - 1)\}
\]
are distinct elements of $U(\Lambda_n)/\{t^k\}_{k \in \Z}$. 
\end{proposition}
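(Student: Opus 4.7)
The approach is to establish the polynomial identity
\[
(k(t-1))^2 \equiv -t \pmod{\Delta_n}
\]
in $\Lambda_n$, and then leverage it together with Proposition~\ref{prop:pmdistinct} to verify that the four classes are pairwise distinct.

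First I would check that $\pm 1$ and $\pm k(t-1)$ all lie in $U(\Lambda_n)$. The case of $\pm 1$ is trivial; for $\pm k(t-1)$, a direct computation gives
\[
k(t-1) \cdot k(t^{-1}-1) = k^2\bigl(2 - t - t^{-1}\bigr) = 2k^2 + \Delta_n - (2k^2-1) \equiv 1 \pmod{\Delta_n},
\]
which also serves as a useful warm-up for the main identity. The key identity then comes from a similar direct calculation: expanding $(k(t-1))^2 + t = k^2 t^2 + (1-2k^2)t + k^2$ and checking that this equals $-t\Delta_n$ (the easy way is to multiply $\Delta_n=-k^2t+(2k^2-1)-k^2t^{-1}$ by $-t$), so the expression vanishes modulo $\Delta_n$.

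With this identity in hand, pairwise distinctness reduces cleanly to Proposition~\ref{prop:pmdistinct}. The equalities $[1]=[-1]$ and $[k(t-1)]=[-k(t-1)]$ each amount to $-1 \in \{t^\ell\}_{\ell \in \Z}$, which is ruled out by Proposition~\ref{prop:pmdistinct} because $n = -k^2 \notin \{0,-1\}$ for $k>1$. Any equality of the remaining form $\epsilon \, t^\ell \equiv k(t-1) \pmod{\Delta_n}$ with $\epsilon = \pm 1$ can be squared and combined with the key identity to give $t^{2\ell} \equiv -t$, i.e., $-1 \equiv t^{2\ell-1}$, again contradicting Proposition~\ref{prop:pmdistinct}.

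I do not foresee any substantial obstacle; the entire argument hinges on the identity $(k(t-1))^2 \equiv -t \pmod{\Delta_n}$, and once this is observed, all four distinctness statements follow from the same one-line contradiction with Proposition~\ref{prop:pmdistinct}. The only ``content'' is therefore spotting the clean algebraic relation between the element $k(t-1)$ and the generator $t$ of the cyclic subgroup we are quotienting by.
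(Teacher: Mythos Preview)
Your proof is correct and actually takes a slightly different route from the paper's. The paper verifies that $k(t-1)$ is unitary by the same computation you give, but then shows $[k(t-1)] \neq [\pm 1]$ directly via a leading-coefficient argument: if $t^a \cdot k(t-1) - (\pm t^b)$ were a multiple of $\Delta_n$, its leading coefficient would have to be divisible by $|n| = k^2$, whereas in fact it has absolute value at most $k+1 < k^2$. Your approach instead discovers the identity $(k(t-1))^2 \equiv -t \pmod{\Delta_n}$ and uses it to reduce every case to Proposition~\ref{prop:pmdistinct}. This is a pleasantly uniform argument: once the squaring identity is in hand, all six pairwise comparisons collapse to the single statement that $-1 \notin \{t^\ell\}$. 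The paper's argument is slightly more elementary in that it avoids the need to spot the identity, but it treats the mixed cases with an ad hoc degree estimate rather than reducing to what has already been proved. Both are short; yours has the advantage of making transparent the fact (implicit in Proposition~\ref{prop:UnitCountSquareFree}) that $k(t-1)$ is a square root of $-t$ in $\Lambda_n$, which is exactly why the group $U(\Lambda_n)/\{t^\ell\}$ picks up an extra factor of two when $n$ is minus a perfect square.
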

\begin{proof}
We have already seen in Proposition~\ref{prop:pmdistinct} that (the classes of) $\pm 1$ represent distinct unitary units in $U(\Lambda_n)/\{t^k\}_{k \in \Z}$. It is clear that (the class of) $k(t - 1)$ is a unitary unit, since
\[
k(t-1) \cdot k(t^{-1} - 1) = k^2(-t +2 - t^{-1}) = nt - 2n + nt^{-1}.
\]
Up to multiplication by powers of $t$, this is congruent to $1$ modulo $\Delta_n = nt^2 - (2n + 1)t + n$. We furthermore claim that $k(t - 1)$ is not in the same class as $\pm 1$. Indeed, suppose this were the case. Multiplying through by powers of $t$ as necessary, we then have the congruence of polynomials
\[
t^a \cdot k(t-1) \equiv t^b \cdot (\pm 1) \bmod \Delta_n.
\]
Since the leading coefficient of $\Delta_n$ is greater in absolute value than the leading coefficient of $t^a \cdot k(t - 1) - t^b \cdot (\pm 1)$, it is clear that this is impossible. The case of $- k(t - 1)$ is analogous. Note that $k(t - 1)$ and $- k(t-1)$ are not in the same class, since this would imply $-1$ lies in the trivial class.
\end{proof}

\begin{remark}
According to Theorem~\ref{thm:MainNumberTheory}, the four elements presented in Proposition~\ref{prop:fourdistinct} constitute the entirety of $U(\Lambda_n)/\{t^k\}_{k \in \Z}$ if $n$ is $- p^k$ for $k$ even. In the general case of Proposition~\ref{prop:fourdistinct}, these four elements are still distinct, but do not necessarily constitute all of $U(\Lambda_n)/\{t^k\}_{k \in \Z}$.
\end{remark}
\color{black}

\subsection{Strategy of the proof in the irreducible case}

The proof when $\Delta_n$ irreducible is significantly longer than in the reducible case.
We therefore begin with a brief outline of this former case.
Fix $n \in \Z$. 
Throughout this section, let 
\[
a = \dfrac{(2n+1) + \sqrt{4n+1}}{2} \quad \text{and} \quad \xi = \dfrac{1 + \sqrt{4n+1}}{2}.
\]
We assume throughout the section that $\Delta_n$ is irreducible. 
The first step of the proof will be to construct a ring isomorphism
\[
f \colon \Lambda_n = \Z[t^{\pm 1}]/(\Delta_n) \rightarrow \Z[1/n, \xi].
\]

Both the domain and the codomain of $f$ are equipped with conjugation involutions giving rise to norm maps:

\begin{definition}\label{def:norms}
The conjugation map $\overline{\cdot}$ on $\Z[t^{\pm 1}]$ is defined by sending $p(t)$ to $p(t^{-1})$; this descends to a conjugation map on $\Z[t^{\pm 1}]/(\Delta_n)$. We thus obtain a group homomorphism
\begin{align*}
N_{\Lambda} \colon \Lambda_n^\times &\to \Lambda_n^\times \\
p &\mapsto p\overline{p}.
\end{align*}
The group of unitary units in $\Lambda_n$ is defined to be $U(\Lambda_n)=\ker(N_{\Lambda})$. We likewise have a conjugation map $\overline{\cdot}$ on $\Z[1/n, \xi]$ which is trivial on the subring $\Z[1/n]$ and satisfies
$$\overline{\xi}=\frac{1-\sqrt{4n+1}}{2}.$$
This defines a group homomorphism:
\begin{align*}
N \colon  \Z[1/n, \xi]^\times&\to  \Z[1/n]^\times  \\
x &\mapsto x\overline{x}.
\end{align*}
The group of unitary units in $\Z[1/n, \xi]$ is defined to be $U(\Z[1/n, \xi]) = \ker(N)$. 
\end{definition}

The following proposition constructs $f$ and shows that it intertwines the conjugation involution on $\Lambda_n$ with the conjugation involution on $\Z[1/n, \xi]$. This will show that $f$ gives an isomorphism between $U(\Lambda_n)$ and $U(\Z[1/n, \xi])$. 

\begin{proposition}
\label{prop:RingIrred}
Assume $\Delta_n$ is irreducible. Extending the assignment $t \mapsto a/n$ to a ring homomorphism defines a surjection
$$f \colon \Z[t^{\pm 1}] \twoheadrightarrow \Z[1/n, \xi]$$
with kernel $(\Delta_n)$.
In particular, this assignment defines an isomorphism 
$$f \colon  \Lambda_n  \xrightarrow{\cong} \Z[1/n, \xi].$$
Moreover, $f$ intertwines conjugation on $\Lambda_n$ with conjugation on $\Z[1/n, \xi]$. 
\end{proposition}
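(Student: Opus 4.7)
The plan is to verify in order that $f$ is well defined on $\Z[t^{\pm 1}]$, that $\Delta_n$ lies in the kernel, that $f$ is surjective, that the kernel equals $(\Delta_n)$, and finally that $f$ intertwines the two involutions.

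First I would extend the assignment $t\mapsto a/n$ to $\Z[t^{\pm 1}]$. For this we need $a/n$ to be a unit in $\Z[1/n,\xi]$. A direct computation gives $a\overline{a}=n^2$, so $(a/n)(\overline{a}/n)=1$, and since $\overline{a}/n=1+(1-\xi)/n\in\Z[1/n,\xi]$, the element $a/n$ is invertible with inverse $\overline{a}/n=f(t^{-1})$. Next, plugging into the defining relation, $\Delta_n(a/n)=n(a/n)^2-(2n+1)(a/n)+n$, which vanishes because $a/n$ is a root of $\Delta_n$ by the quadratic formula. Hence $(\Delta_n)\subseteq\ker f$.

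For surjectivity, the image contains both $a/n$ and $\overline{a}/n=n/a$, so it contains $(a+\overline{a})/n=(2n+1)/n=2+1/n$, which produces $1/n$; it also contains $a/n-1=\xi/n$, and thus $\xi=n\cdot(\xi/n)$. Conversely the image clearly lies in $\Z[1/n,\xi]$.

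The main obstacle is showing $\ker f=(\Delta_n)$, since $\Lambda_n$ is not finitely generated as a $\Z$-module (the leading coefficient $n$ of $\Delta_n$ is not a unit in $\Z$). I would argue this via primitivity and Gauss's lemma. The polynomial $\Delta_n=nt^2-(2n+1)t+n$ has content $\gcd(n,2n+1,n)=\gcd(n,2n+1)=1$, so it is primitive in $\Z[t]$. Given any $p(t)\in\ker f$, clearing negative powers by multiplying by a suitable $t^N$ reduces the claim to: if $P(t)\in\Z[t]$ maps to $0$ in $\mathbb{Q}(\sqrt{4n+1})$, then $\Delta_n\mid P$ in $\Z[t]$. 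Since $\Delta_n$ is irreducible over $\Q$ (as $4n+1$ is not a square), we have $P=q\Delta_n$ for some $q\in\Q[t]$; Gauss's lemma together with primitivity of $\Delta_n$ forces $q\in\Z[t]$. Dividing back through by $t^N$ shows $p\in(\Delta_n)$ in $\Z[t^{\pm 1}]$, completing the kernel computation and yielding the isomorphism $f\colon\Lambda_n\xrightarrow{\cong}\Z[1/n,\xi]$.

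Finally, for the intertwining of conjugations: the involution on $\Lambda_n$ sends $t\mapsto t^{-1}$, so $f(\overline{t})=f(t^{-1})=\overline{a}/n$. On the other hand the involution on $\Z[1/n,\xi]$ fixes $\Z[1/n]$ and sends $\xi\mapsto 1-\xi$, so $\overline{f(t)}=\overline{1+\xi/n}=1+(1-\xi)/n=\overline{a}/n$. Since $f$ is a ring homomorphism generated by the image of $t$, this identity on generators is enough to conclude $f\circ\overline{\cdot}=\overline{\cdot}\circ f$.
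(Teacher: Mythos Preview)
Your proof is correct and follows essentially the same route as the paper's: both verify that $a/n$ is a unit via $a\bar a=n^2$, deduce $f(t^{-1})=\bar a/n$ (which simultaneously handles the intertwining of conjugations), check surjectivity by producing $1/n$ and $\xi$ (the paper produces $1/n$ via $f(t+t^{-1}-2)$ and $a$ via $f(nt)$, which is equivalent), and identify the kernel using irreducibility of $\Delta_n$. Your Gauss's lemma argument for the kernel is a welcome expansion of what the paper states tersely as ``$\Delta_n$ is the minimal integral polynomial relation satisfied by $a/n$''; the point that $\Delta_n$ is primitive (so divisibility in $\Q[t]$ upgrades to divisibility in $\Z[t]$) is exactly what is being used there.
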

\begin{proof}
First note that since~$a$ and~$\xi$ differ by an integer, it is clear that~$\Z[1/n, a] = \Z[1/n, \xi]$ and therefore $f$ does indeed take values in $\Z[1/n, \xi]$.
Next, observe that since~$a/n \cdot \bar{a}/n = 1$, extending~$f$ as a ring homomorphism implies that~$f(t^{-1})=\overline{a}/n$; here note that~$\bar{a}/n = 2 + 1/n - a/n$ is indeed an element of~$\Z[1/n, a]$. 
This also shows that $f$ intertwines conjugation on $\Z[t^{\pm 1}]$ with conjugation on $\Z[1/n, \xi]$.
The fact that $\ker(f)=(\Delta_n)$ follows from the claim that~$\Delta_n$ is the minimal integral polynomial relation satisfied by~$a/n$; this holds since~$\Delta_n$ is irreducible. To see that~$f$ is surjective, note that~$f(nt) = a$ and~$f(t + t^{-1} - 2) = 1/n$, so~$a$ and~$1/n$ are both in~$\im(f)$. 
\end{proof}

The next result is a direct consequence of Proposition~\ref{prop:RingIrred} and reduces the calculation of the rank~$\rk(U(\Lambda_n))$ to the calculation of $\operatorname{rk}(\Z[1/n, \xi]^\times)$ and~$\rk(\im(N))$.

\begin{proposition}
\label{prop:RankNullity}
The isomorphism $f$ intertwines the norms on $\Lambda_n$ and $\Z[1/n, \xi]$ and therefore fits into the following commutative diagram of groups: 
$$
\xymatrix@C+2pc{
0 \ar[r]& U(\Lambda_n) \ar[r]\ar[d]^\cong_{f|_{U(\Lambda_n)}}& \Lambda_n^\times \ar[r]^{N_{\Lambda}}\ar[d]^\cong_f&  \Lambda_n^\times \ar[d]_f \\
0 \ar[r]& U(\Z[1/n, \xi]) \ar[r]& \Z[1/n, \xi]^\times \ar[r]^{N} & \Z[1/n]^\times.
}
$$
In particular, the following equality holds:

$$ \rk (U(\Lambda_n))=\rk(\Z[1/n,\xi]^\times)-\rk(\im(N)).$$
\end{proposition}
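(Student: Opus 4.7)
The plan is to observe that this proposition is a formal consequence of Proposition~\ref{prop:RingIrred} together with the rank-additivity of short exact sequences of finitely generated abelian groups.

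First I would verify that $f$ intertwines the two norm maps. Since Proposition~\ref{prop:RingIrred} says $f$ is a conjugation-equivariant ring isomorphism, for any $p \in \Lambda_n^\times$ we have
$$f(N_\Lambda(p)) = f(p \overline{p}) = f(p) \overline{f(p)} = N(f(p)).$$
Note that the image of $N_\Lambda$ under $f$ automatically lies in the fixed subring of $\Z[1/n,\xi]$ under conjugation, which is $\Z[1/n]$; this makes sense of the lower-right corner of the diagram and shows that the diagram commutes. Restricting $f$ to kernels then yields an isomorphism $f|_{U(\Lambda_n)} \colon U(\Lambda_n) \xrightarrow{\cong} U(\Z[1/n,\xi])$ because $f$ is a bijection intertwining the two norm maps.

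Next I would extract the rank formula from the tautological short exact sequence
$$1 \longrightarrow U(\Lambda_n) \longrightarrow \Lambda_n^\times \xrightarrow{\;N_\Lambda\;} \im(N_\Lambda) \longrightarrow 1.$$
By Proposition~\ref{prop:NotUFD} (and the isomorphism $f$), the group $\Lambda_n^\times$ is finitely generated abelian; consequently so are both the sub- and quotient groups $U(\Lambda_n)$ and $\im(N_\Lambda)$. Rank is therefore additive across this sequence, giving $\rk(U(\Lambda_n)) = \rk(\Lambda_n^\times) - \rk(\im(N_\Lambda))$. Substituting $\rk(\Lambda_n^\times) = \rk(\Z[1/n,\xi]^\times)$ and $\rk(\im(N_\Lambda)) = \rk(\im(N))$, both obtained by transporting along $f$, yields the claimed equality.

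There is no genuine obstacle here; the content of the proposition is just the tautology that unitary units form the kernel of the norm, so that the enumeration of $\Z$-disks is reduced to two independent number-theoretic tasks: computing the rank of the full unit group (already handled in Proposition~\ref{prop:NotUFD}) and computing the rank of the image of the norm, which is where the real work of the remainder of the section will lie.
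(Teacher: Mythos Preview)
Your proposal is correct and follows the same approach as the paper, which simply verifies $N(f(p))=f(p)\overline{f(p)}=f(p\overline{p})=f(N_{\Lambda}(p))$ and then states that the remaining assertions follow from Proposition~\ref{prop:RingIrred}. You have spelled out more detail than the paper does (the rank-additivity argument and the justification of finite generation), though note that Proposition~\ref{prop:NotUFD} concerns $\Z[1/n,\omega]^\times$ rather than $\Z[1/n,\xi]^\times$; the latter is a subgroup of the former, so finite generation still follows.
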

\begin{proof}
The fact that $f$ intertwines the norms follows because $f$ is a homomorphism of rings with involutions:
$$N(f(p))=f(p)\overline{f(p)}=f(p\overline{p})=f(N_{\Lambda}(p)).$$
The remaining assertions follow from Proposition~\ref{prop:RingIrred}.
\end{proof}

In the next sections, we calculate the rank of the groups involved in Proposition~\ref{prop:RankNullity}.
Once these ranks have been calculated, leading to a calculation of $\rk(U(\Lambda_n))$,  it is then not difficult to deduce the rank of $U(\Lambda_n)/\{t^k\}_{k \in \Z}$ by studying the image of $\{ t^k \}_{k \in \Z}$ under $f$.

\subsection{Rank calculations when $\Delta_n$ is irreducible}

Following the discussion of the previous subsection, we now calculate $\operatorname{rk}(\Z[1/n, \xi]^\times)$ and~$\rk(\im(N))$.
We begin with the former. Proposition~\ref{prop:NotUFD} shows how to calculate $\operatorname{rk}(\Z[1/n, \omega]^\times)$ when $\Z[\omega]$ is a ring of quadratic integers. However, Proposition~\ref{prop:NotUFD} is not directly applicable to our situation: since $4n + 1$ is not necessarily square free, $\xi$ may not be of the form described in \eqref{eq:Omega}. To rectify this, let
\[
4n + 1 = c^2 d
\]
for $c > 0$ and $d$ a square free integer and define
\[
\omega = \dfrac{1 + \sqrt{d}}{2}.
\]
Note that $\omega$ is of the form described in \eqref{eq:Omega}, whereas
\[
\xi = \dfrac{1 + \sqrt{4n + 1}}{2} = \dfrac{1 + c\sqrt{d}}{2}.
\]
It is readily checked that $\Z[\xi]$ is a subring of $\Z[\omega]$, and hence that $\Z[1/n, \xi]^\times$ is a subgroup of~$\Z[1/n, \omega]^\times$. Our strategy will thus be to prove that the ranks of~$\Z[1/n, \xi]^\times$ and $\Z[1/n, \omega]^\times$ coincide, and then compute the latter using Proposition~\ref{prop:NotUFD}.

Recall that $\Omega(n)$ denotes the number of (positive) distinct primes dividing~$n$ in~$\Z$. 

\begin{proposition}
\label{prop:RankUnits}
If $\Delta_n$ is irreducible, then
\[
\operatorname{rk}(\Z[1/n, \xi]^\times) = \operatorname{rk}(\Z[1/n, \omega]^\times) = 
\begin{cases} 
1 + 2\Omega(n)& \text{ if } n > 0 \\
2\Omega(n) & \text{ if } n < 0.
\end{cases}
\]
\end{proposition}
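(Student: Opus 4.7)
The plan is to handle the two claimed equalities in turn: first reduce to the maximal-order case by showing $\rk(\Z[1/n, \xi]^\times) = \rk(\Z[1/n, \omega]^\times)$, and then apply Proposition~\ref{prop:NotUFD} to the right-hand side.

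For the rank equality, the key preliminary observation is that $c$ is odd and coprime to $n$. Oddness follows from the fact that $c^2 d = 4n+1$ is odd, and coprimality from $\gcd(4n+1, n) = 1$. Combining $c$ odd with $\xi = (1+c\sqrt{d})/2$ and $\omega = (1+\sqrt{d})/2$ gives the identity $c\omega = \xi + (c-1)/2 \in \Z[\xi]$, so $c \cdot \Z[1/n, \omega] \subseteq \Z[1/n, \xi]$. Furthermore, since $\gcd(c,n) = 1$, the quotient ring $\Z[1/n, \omega]/(c) \cong (\Z/c\Z)^2$ is finite, so its unit group has some finite exponent $N$. For any $u \in \Z[1/n,\omega]^\times$, applying this to both $u$ and $u^{-1}$ shows $u^{\pm N} \in 1 + c\Z[1/n,\omega] \subseteq \Z[1/n,\xi]$, whence $u^N \in \Z[1/n,\xi]^\times$. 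Thus the cokernel of the inclusion $\Z[1/n,\xi]^\times \hookrightarrow \Z[1/n,\omega]^\times$ is $N$-torsion, and the two ranks agree.

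For $\rk(\Z[1/n, \omega]^\times)$, I would apply Proposition~\ref{prop:NotUFD} directly; it yields $\rk(\Z[\omega]^\times) + r$, where $r$ is the number of distinct prime ideals in the factorization of $(n)$ in $\Z[\omega]$. Since $d$ and $n$ have the same sign (as $4n+1 = c^2 d$ is positive iff $n$ is nonnegative, and $n=0$ is excluded by irreducibility), Theorem~\ref{thm:DirichletUnit} gives $\rk(\Z[\omega]^\times) = 1$ if $n > 0$ and $0$ if $n < 0$. To see $r = 2\Omega(n)$, I would verify every prime $p \mid n$ splits in $\Z[\omega]$ via Theorem~\ref{thm:RamifyingSplittingInert}. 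For odd $p \mid n$, the relation $c^2 d \equiv 1 \pmod p$ forces $p \nmid c$ and $p \nmid d$, and exhibits $d \equiv (c^{-1})^2$ as a nonzero quadratic residue mod $p$; for $p = 2 \mid n$, the relation becomes $c^2 d \equiv 1 \pmod 8$ with $c$ odd, forcing $d \equiv 1 \pmod 8$. In both cases $(p)$ splits as a product of two distinct prime ideals.

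The trickiest step I expect to be the first rank comparison, specifically verifying that $\Z[1/n, \xi]$ and $\Z[1/n, \omega]$ have unit groups of the same rank even when $c > 1$ so that $\Z[\xi]$ is a non-maximal order in $\Q(\sqrt d)$. Although one could appeal to the classical fact that any two orders in a number field have unit groups of the same rank (and this persists after localization), the conductor argument sketched above via the element $c$ has the advantage of being self-contained and adapting cleanly to the $S$-integer setting.
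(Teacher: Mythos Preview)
Your proof is correct and follows essentially the same approach as the paper's: both arguments establish the first equality by showing that some fixed power of every unit in $\Z[1/n,\omega]$ lands in $\Z[1/n,\xi]$ via the finiteness of the quotient modulo $c$ together with the containment $1 + c\Z[\omega] \subseteq \Z[\xi]$, and both handle the second equality by checking that every prime dividing $n$ splits in $\Z[\omega]$ and then invoking Proposition~\ref{prop:NotUFD}. Your version is slightly more streamlined in that you work directly with the quotient $\Z[1/n,\omega]/(c)$ and use that ring homomorphisms send units to units, whereas the paper first reduces to elements of the saturation of $\{n^k\}$ in $\Z[\omega]$ and argues more explicitly that such elements are units modulo $c$.
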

\begin{proof}
We establish the claimed equalities in succession. 
For the first equality, it suffices to prove there exists a natural number~$g$ such that if~$x_1 \in \Z[1/n, \omega]^\times$, then $x_1^g \in \Z[1/n, \xi]^\times$.
 Without loss of generality we may assume that $x_1$ is an element of the saturation of~$\{n^k\}$ in $\Z[\omega]$, since these generate $\Z[1/n, \omega]^\times$; recall Lemma~\ref{lem:Saturation}.
Then $x_1 x_2 = n^k$ for some~$x_2 \in \Z[\omega]$ and $k \in \N$. 

Consider the quotient ring~$\Z[\omega]/(c)$. 
Since~$\Z[\omega]$ is a free~$\Z$-module of rank two (generated by~$1$ and~$\omega$), it is clear that~$\Z[\omega]/(c) \cong (\Z/c\Z) \oplus (\Z/c\Z)$. 
In particular, the group of units in~$\Z[\omega]/(c)$ is finite, being a subset of a finite ring.  We claim that $g$ may be taken to be the order of this group.
Rearranging the equation~$4n + 1 = c^2d$ and exponentiating gives~$(4n)^k = (c^2d - 1)^k$. Expanding this latter equality and using the fact that $x_1 x_2 = n^k$ gives a linear combination of $x_1$ and $c$ which is equal to $1$. 
This means that the class of~$x_1$ is a unit in~$\Z[\omega]/(c)$. 
Thus~$x_1^g = 1$ in $\Z[\omega]/(c)$; that is,~$x_1^g \equiv 1 \bmod c \Z[\omega]$. A similar statement holds for~$x_2^g$. 

We have thus shown that~$x_1^g \in 1 + c \Z[\omega]$. 
However, by direct inspection, we clearly have~$1 + c\Z[\omega] \subseteq \Z + c\Z[\omega] \subseteq \Z[\xi]$, so~$x_1^g \in \Z[\xi]$. 
Since the same holds for~$x_2^g$, we conclude that~$x_1^g$ is an element of the saturation of~$\{n^k\}$ in~$\Z[\xi]$, and hence a unit in~$\Z[1/n, \xi]$ by Lemma~\ref{lem:Saturation}. This completes the proof of the first equality.

We now apply Proposition~\ref{prop:NotUFD} to establish the second equality. Let $\smash{n=p_1^{\alpha_1} \cdots p_{\Omega(n)}^{\alpha_{\Omega(n)}}}$ be the prime factorization of $n$ in $\Z$.
Note that since $4n + 1 = c^2 d$, we have that $c$ is odd and hence $c^2 \equiv 1 \bmod 4$. 
It follows that $d \equiv 1 \bmod 4$ and we may apply the splitting criterion (Theorem~\ref{thm:RamifyingSplittingInert}) to determine the factorization of each $(p_i) \subset \Z[\omega]$.
Clearly, none of the $p_i$ divide $4n+1$. The equality $4n + 1 = c^2 d$ shows that~$d \equiv 1/c^2 \bmod p_i$, so $d$ is a quadratic residue mod $p_i$. 
Hence by Theorem~\ref{thm:RamifyingSplittingInert}, each $(p_i) \subset \Z[\omega]$ decomposes as a product of two distinct prime ideals in $\Z[\omega]$. (In the special case $p_i = 2$, we have $d \equiv 1 \bmod 8$ since necessarily $c^2 \equiv 1 \bmod 8$.) 
Proposition~\ref{prop:NotUFD} applied with~$r=2 \Omega(n)$ then shows
\[
\operatorname{rk}(\Z[1/n, \omega]^\times) = 
\begin{cases} 
1 + 2\Omega(n)& \text{ if } n > 0 \\
2\Omega(n) & \text{ if } n < 0,
\end{cases}
\]
as desired.
\end{proof}

We now carry out the last step in our calculation of~$\rk(U(\Lambda_n))$ by determining~$\rk(\im(N))$.

\begin{proposition}
\label{prop:RankIm(N)}
If $\Delta_n$ is irreducible, then
$$\rk(\im(N)) = \rk(\Z[1/n]^\times)=\Omega(n).$$
\end{proposition}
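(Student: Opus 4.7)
The equality $\rk(\Z[1/n]^\times) = \Omega(n)$ is standard: the group $\Z[1/n]^\times$ is generated by $\pm 1$ together with the primes $p_1, \ldots, p_{\Omega(n)}$ appearing in the factorization of $n$ in $\Z$, and these primes are multiplicatively independent because their $p_i$-adic valuations in $\Q^\times$ are linearly independent. Thus $\Z[1/n]^\times \cong \{\pm 1\} \times \Z^{\Omega(n)}$.

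For $\rk(\im(N)) = \Omega(n)$, the plan is as follows. Since $\im(N) \subseteq \Z[1/n]^\times$ (a norm is fixed by conjugation and hence lies in the subring $\Z[1/n]$), we automatically have $\rk(\im(N)) \leq \Omega(n)$, so it suffices to exhibit $\Omega(n)$ multiplicatively independent elements of $\im(N)$. Write $n = p_1^{\alpha_1} \cdots p_{\Omega(n)}^{\alpha_{\Omega(n)}}$ as in the proof of Proposition~\ref{prop:RankUnits}. There it is shown that, because $4n+1$ is a quadratic residue modulo each $p_i$ (via the relation $4n+1 = c^2 d$), the splitting criterion Theorem~\ref{thm:RamifyingSplittingInert} gives a decomposition $(p_i) = \mathfrak{p}_i \bar{\mathfrak{p}}_i$ in $\Z[\omega]$ into two distinct prime ideals exchanged by the conjugation involution. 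If $h$ denotes the class number of $\Z[\omega]$ (finite by Theorem~\ref{thm:DedekindClass}), then $\mathfrak{p}_i^h = (x_i)$ is principal, and consequently
\[
(x_i \bar{x}_i) = \mathfrak{p}_i^h \bar{\mathfrak{p}}_i^h = (p_i)^h = (p_i^h),
\]
so $N(x_i) = x_i \bar{x}_i = \varepsilon_i p_i^h$ for some unit $\varepsilon_i \in \Z[\omega]^\times \cap \Z = \{\pm 1\}$.

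The element $x_i$ a priori lives in $\Z[\omega]$ rather than $\Z[1/n, \xi]$, but the argument used in the proof of Proposition~\ref{prop:RankUnits} shows that a sufficiently large power $x_i^g$ lies in $\Z[1/n, \xi]^\times$; indeed $x_i$ is in the saturation of $\{n^k\}$ in $\Z[\omega]$, and the same reduction modulo $c$ argument forces $x_i^g \in \Z + c\Z[\omega] \subseteq \Z[\xi]$. Setting $y_i := x_i^g$, we obtain $N(y_i) = \varepsilon_i^g p_i^{hg} \in \im(N)$. Since the $y_i$ have distinct $p_j$-adic valuations ($v_{p_j}(N(y_i)) = hg$ if $j = i$ and $0$ otherwise), the elements $N(y_1), \ldots, N(y_{\Omega(n)})$ are multiplicatively independent in $\Z[1/n]^\times$. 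Therefore $\rk(\im(N)) \geq \Omega(n)$, and combined with the reverse inequality this yields the claim.

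The only mildly subtle point is the transition from $\Z[\omega]$ to $\Z[1/n, \xi]$, but this is already handled by the reduction-modulo-$c$ trick established in Proposition~\ref{prop:RankUnits}, so no further obstacle arises.
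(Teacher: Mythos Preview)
Your proof is correct, but it takes a substantially more elaborate route than the paper. The paper's argument for $\rk(\im(N)) = \rk(\Z[1/n]^\times)$ is a one-liner: each prime $p_i$ dividing $n$ already lies in $\Z[1/n] \subset \Z[1/n,\xi]$ and is a unit there, and since conjugation is trivial on $\Z[1/n]$ we have $N(p_i) = p_i \cdot \overline{p_i} = p_i^2$. Thus $p_i^2 \in \im(N)$ for every $i$, and these squares are multiplicatively independent, giving $\rk(\im(N)) \geq \Omega(n)$ immediately.

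By contrast, you split each $(p_i)$ in $\Z[\omega]$, invoke the class number to produce generators $x_i$, and then appeal to the reduction-modulo-$c$ trick from Proposition~\ref{prop:RankUnits} to descend to $\Z[1/n,\xi]$. All of this machinery is unnecessary here because the elements you ultimately need in $\im(N)$ are powers of the rational primes $p_i$, and those are available directly without ever leaving $\Z$. Your argument does work, and it has the mild virtue of producing norms $\pm p_i^{hg}$ that witness the splitting of $(p_i)$ more explicitly, but for the purposes of this proposition the paper's observation that $N(p) = p^2$ for $p \in \Z$ is both shorter and conceptually cleaner.
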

\begin{proof}
We establish the claimed equalities in succession. 
Note that by Lemma~\ref{lem:Saturation}, $\Z[1/n]^\times$ is generated by the primes in $\Z$ dividing $n$. (Since $\Z$ is a UFD, it is clear that the saturation of~$\{n^k\}$ in $\Z$ is generated by the prime factors of $n$.) If $p$ is such a prime, then $p^2$ lies in the image of $N$, since by definition
\[
N(p) = p^2.
\]
It follows that $\rk(\im(N))=\rk(\Z[1/n]^\times)$.

To prove the second equality, observe that
$$ \Z[1/n]^\times \cong \lbrace \pm 1 \rbrace \times \langle p_1,\ldots,p_{\Omega(n)}\rangle$$
where $\smash{n = p_1^{\alpha_1} \cdots p_{\Omega(n)}^{\alpha_{\Omega(n)}}}$ is the prime factorization of $n$ in $\Z$. Indeed, the linear independence of~$p_1, \ldots, p_{\Omega(n)}$ follows immediately from uniqueness of prime factorization. Hence $\rk(\Z[1/n]^\times)=\Omega(n)$, as desired.
\end{proof}

We now prove the main result of this section.

\begin{theorem}
\label{thm:IrreducibleRank}
If $\Delta_n$ is irreducible, then
\[
\operatorname{rk}(U(\Lambda_n)/\{t^k\}_{k \in \Z}) = 
\begin{cases} 
\Omega(n) & \text{ if } n > 0 \\
\Omega(n) - 1& \text{ if } n < 0.
\end{cases}
\]
In particular $U(\Lambda_n)/\{t^k\}_{k \in \Z}$ is finite if and only if either $n=-1,0,1$ or $-p^k$ with $p$ prime.
\end{theorem}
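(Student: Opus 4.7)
The plan is to combine the rank formulas established in the preceding propositions with a separate analysis of the order of $t$ in $U(\Lambda_n)$. By Proposition~\ref{prop:RankNullity} together with Propositions~\ref{prop:RankUnits} and~\ref{prop:RankIm(N)}, one immediately obtains
\[
\rk(U(\Lambda_n)) \;=\; \rk(\Z[1/n, \xi]^\times) \;-\; \rk(\im N) \;=\;
\begin{cases} \Omega(n) + 1 & \text{if } n > 0, \\ \Omega(n) & \text{if } n < 0. \end{cases}
\]
The stated formula for the quotient will then follow once I show that $\{t^k\}_{k \in \Z}$ is an infinite cyclic subgroup of $U(\Lambda_n)$ whenever $n \neq -1$, because quotienting by such a subgroup drops the rank by exactly one. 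The case $n = -1$ must be handled separately, but there $\rk(U(\Lambda_{-1})) = \Omega(-1) = 0$, so the quotient trivially has rank zero (in agreement with the convention that strictly negative values denote rank zero).

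To control the order of $t$, I would transfer the question across the ring isomorphism $f$ of Proposition~\ref{prop:RingIrred} to the element $a/n \in \Z[1/n, \xi]$, where $a = ((2n+1)+\sqrt{4n+1})/2$. When $n > 0$, the radical $\sqrt{4n+1}$ is real and positive, and a direct inequality shows $a/n > 1$, so no nonzero power of $a/n$ can equal $1$. When $n \leq -1$, however, the radical is purely imaginary, and a brief computation using $a + \overline{a} = 2n+1$ yields $a\overline{a} = n^2$, whence $|a/n| = 1$ in the complex absolute value.

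The main obstacle is ruling out finite order in this latter case, where the elementary absolute-value argument is unavailable. My approach is as follows: if $(a/n)^k = 1$ for some $k \neq 0$, then $a/n$ is a root of unity and in particular an algebraic integer. Its minimal polynomial over $\Q$ is $t^2 - ((2n+1)/n)\, t + 1$, obtained by dividing $\Delta_n$ by its leading coefficient (irreducibility of $\Delta_n$ over $\Z$ together with Gauss's lemma ensures this rational polynomial is indeed the minimal polynomial of $a/n$, since $\gcd(n,2n+1)=1$). For an algebraic integer, this monic polynomial must have integer coefficients, forcing $n \mid 2n+1$, hence $n \mid 1$, and therefore $n = -1$.

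Finally, the ``in particular'' statement follows by direct inspection of the formula: the rank vanishes precisely when either $n > 0$ with $\Omega(n) = 0$ (forcing $n = 1$), or $n < 0$ with $\Omega(n) \leq 1$ (forcing $n = -1$ or $n = -p^k$ for some prime $p$). The degenerate case $n = 0$ yields the trivial ring $\Lambda_0 = 0$ and hence a trivial, a fortiori finite, quotient.
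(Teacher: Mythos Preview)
Your proof is correct and follows the same overall architecture as the paper: first assemble $\rk(U(\Lambda_n))$ from Propositions~\ref{prop:RankNullity}, \ref{prop:RankUnits}, and \ref{prop:RankIm(N)}, then show $\{t^k\}$ contributes exactly one to the rank except when $n=-1$.

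The only genuine difference is in the argument that $t$ has infinite order in $U(\Lambda_n)$ for $n\neq -1$. The paper argues directly in $\Z[t^{\pm 1}]$: if $t^k\equiv 1\bmod\Delta_n$ then $\Delta_n$ divides $t^k-1$, forcing the leading coefficient of $\Delta_n$ to be $\pm 1$, hence $n=\pm 1$; the case $n=1$ is then dispatched by a forward reference to the explicit computation in Proposition~\ref{prop:n=-101}. You instead transport the question through $f$ to $a/n\in\Z[1/n,\xi]$ and split according to the sign of $n$: for $n>0$ the element $a/n$ is a real number exceeding $1$, while for $n<-1$ a putative finite order would make $a/n$ a root of unity, hence an algebraic integer, contradicting the non-integrality of the middle coefficient $(2n+1)/n$ of its minimal polynomial. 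Your route is arguably cleaner in that it handles $n=1$ uniformly with all $n>0$ and avoids the forward reference; the paper's polynomial-divisibility argument is more elementary but leaves $n=1$ as a loose end to be tied up later.
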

\begin{proof}
Proposition~\ref{prop:RankNullity} shows that 
$$ \rk (U(\Lambda_n))=\rk(\Z[1/n,\xi]^\times)-\rk(\im(N)).$$
Combining Proposition~\ref{prop:RankUnits} with Proposition~\ref{prop:RankIm(N)}  we deduce that 
$$ \operatorname{rk}(U(\Lambda_n))=
\begin{cases} 
\Omega(n)+1 & \text{ if } n > 0 \\
\Omega(n)& \text{ if } n < 0.
\end{cases}
$$
It remains to mod out by $\{t^k\rbrace$.
We claim that if $n\neq -1$, then $\{t^k\}_{k \in \Z}$ is a rank-one subgroup of~$U(\Lambda_n)$. 
Suppose not. Then there is some $k$ such that $t^k \equiv 1 \bmod \Delta_n$ in $\Z[t^{\pm 1}]$. This means that $t^k - 1 = \Delta_n \cdot p(t)$ for some Laurent polynomial~$p(t)$. This is only possible if $\Delta_n$ is monic, which shows $n = \pm 1$. In the case that $n = 1$, an explicit calculation of $U(\Lambda_n)$ shows that $t$ is of infinite order; see Proposition~\ref{prop:n=-101} below. 
This concludes the proof of the claim.

The claim shows that as long as $n \neq -1$, quotienting out $U(\Lambda_n)$ by $\{t^k\}_{k \in \Z}$ decreases the rank by one, which immediately gives the theorem. 
On the other hand, if $n = -1$, then  $\Omega(n)=0$, so~$U(\Lambda_n)$ is already finite and the desired equality is trivial.
\end{proof}

\subsection{The cases where $\Delta_n$ is irreducible and the count is finite} 
\label{sub:FiniteIrreducible}

Throughout this section, we continue assuming that~$\Delta_n$ is irreducible.
In Theorem~\ref{thm:IrreducibleRank} we determined the cases in which the number of disks is infinite; we now focus on the cases in which the count is finite.
We first treat the cases $n=-1,0,1$ where the computation of $U(\Lambda_n)$ is immediate, before moving on to the case~$n=-p^k$ which requires more involved arguments.
It will be helpful to keep in mind the norm map on the set of ideals of $\Z[\omega]:$

\begin{definition}
If $\mathfrak{p}$ is an ideal in $\Z[\omega]$, we define the \emph{ideal norm} of $\mathfrak{p}$ by
$$N(\mathfrak{p}):= |\Z[\omega]/\mathfrak{p}|.$$
We refer to~\cite[Section 5.3]{Stewart} for further details. Note that this is a multiplicative map from the set of ideals in $\Z[\omega]$ to $\N$ and that if $\mathfrak{p} = (x)$ is principal, then $N(\mathfrak{p}) = |N(x)|$. (Here, by $N(x)$ we mean the norm of $x$ constructed as in Definition~\ref{def:norms}.) 
\end{definition}

We now begin our analysis of $U(\Lambda_n)/\{t^k\}_{k \in \Z}$ in the finite case.

\begin{proposition}
\label{prop:n=-101}
For $n=-1,0,1$,  the group $U(\Lambda_n)/\{t^k\}_{k \in \Z}$ can be described as follows: 
$$
U(\Lambda_n)/\{t^k\}_{k \in \Z} \cong 
\begin{cases} 
1  & \text{ if } n=\unaryminus 1,0 \\
\Z/2\Z& \text{ if } n=1.
\end{cases}
$$
\end{proposition}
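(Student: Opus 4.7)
The plan is to handle the three values of $n$ separately, since in each case $\Lambda_n$ is (a quotient of) a familiar ring whose unit group can be written down explicitly.

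When $n = 0$, the polynomial $\Delta_0$ equals $-1$, so $\Lambda_0$ is the zero ring and the claim is immediate. When $n = -1$, multiplying the defining relation $-t + 1 - t^{-1} = 0$ by $-t$ shows that $t$ satisfies $t^2 - t + 1 = 0$ in $\Lambda_{-1}$, so $t$ is a primitive sixth root of unity and $\Lambda_{-1}$ is isomorphic to the ring of Eisenstein integers $\Z[\zeta_6]$. By Theorem~\ref{thm:DirichletUnit}, this ring has exactly six units, namely the sixth roots of unity $\{\pm 1, \pm \zeta_6, \pm \zeta_6^2\}$, and since $t^3 = -1$ in $\Lambda_{-1}$ these six units are precisely $\{t^k : 0 \leq k \leq 5\}$. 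Every such power of $t$ is automatically unitary because $t \bar{t} = t t^{-1} = 1$, so $U(\Lambda_{-1}) = \{t^k\}_{k \in \Z}$ and the quotient is trivial.

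The main case is $n = 1$. Here Proposition~\ref{prop:RingIrred} gives an isomorphism $f \colon \Lambda_1 \xrightarrow{\cong} \Z[\xi]$, where $\xi = (1 + \sqrt{5})/2$ is the golden ratio $\varphi$, sending $t \mapsto a = (3+\sqrt{5})/2$. A direct calculation gives $a = \varphi^2$. Applying Dirichlet's unit theorem (Theorem~\ref{thm:DirichletUnit}) to the real quadratic field $\Q(\sqrt{5})$ yields $\Z[\varphi]^\times = \{\pm \varphi^k : k \in \Z\}$, with the fundamental relation $\varphi \bar{\varphi} = -1$. Consequently the norm of $\pm \varphi^k$ is $(-1)^k$, so the unitary units are precisely $\{\pm \varphi^{2k} : k \in \Z\} = \{\pm 1\} \times \langle \varphi^2 \rangle$. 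Since $f(\{t^k\}_{k \in \Z}) = \langle \varphi^2 \rangle$, the quotient $U(\Lambda_1)/\{t^k\}_{k \in \Z}$ is isomorphic to $\{\pm 1\} \cong \Z/2\Z$, as claimed. (Note that $-1 \notin \langle \varphi^2 \rangle$, since $\varphi^2 > 1$ generates a torsion-free subgroup, so the two cosets are genuinely distinct — this is also consistent with Proposition~\ref{prop:pmdistinct}.)

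There is no real conceptual obstacle; the only care required is in verifying the small algebraic identities $a = \varphi^2$ and $\varphi \bar{\varphi} = -1$, and in being honest about the fact that the $\{\pm 1\}$ factor survives the quotient. The result is essentially a bookkeeping exercise applying the explicit unit-group descriptions (Theorem~\ref{thm:DirichletUnit}) to the three rings appearing in the proposition.
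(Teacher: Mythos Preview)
Your proof is correct and follows essentially the same approach as the paper: in each case you identify $\Lambda_n$ with a familiar ring (zero ring, Eisenstein integers, $\Z[\varphi]$), invoke Theorem~\ref{thm:DirichletUnit} to read off the unit group, determine which units are unitary, and compare with the subgroup generated by $t$. The only cosmetic difference is that for $n=-1$ you identify $\Lambda_{-1}\cong\Z[\zeta_6]$ directly from the relation $t^2-t+1=0$ rather than via the map $f$ of Proposition~\ref{prop:RingIrred}, which is in fact slightly more streamlined.
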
 
\begin{proof}
First let $n = -1$, in which case~$\xi = (1 + \sqrt{-3})/2$. The ring $\Z[\xi]$ in this setting is known as the \textit{ring of Eisenstein integers}; we have
\[
\Z[\xi]^\times = \{\pm 1\} \times \{1, \xi, \xi^2\}
\]
as discussed in the case $d = -3$ of Theorem~\ref{thm:DirichletUnit}. Since $\xi \overline{\xi} = 1$, each of these units is unitary. 
Using the homomorphism $f$ from Proposition~\ref{prop:RingIrred},
we compute that
$$f(t)=\dfrac{a}{n} = \dfrac{1 - \sqrt{-3}}{2} = - \xi^2.$$
It readily follows that $U(\Lambda_n)/\{t^k\}_{k \in \Z}$ is trivial. 
Next we assume that~$n = 0$.
In this case,  the ring~$\Lambda_n =\Z[t^{\pm 1}]/(1)$ is trivial and therefore so is the group of units.

We conclude with the case $n=1$, in which case~$\xi = (1 + \sqrt{5})/2$.
Then $\xi$ itself is a fundamental unit for $\Z[\xi]$ and~$\Z[\xi]^\times = \{ \pm \xi^k \}$; see \cite[Section 7 (Exercise 2)]{Neukirch}.  
Since~$\xi \overline{\xi} = -1$, the unitary units in~$\Z[\xi]^\times$ are given by $\{\pm \xi^{2k}\}$. 
Recalling from Proposition~\ref{prop:RingIrred} that $f(t)=a/n$, the fact that~$U(\Lambda_n)/\{t^k\}_{k \in \Z} \cong \Z/2\Z$ now follows from the following calculation:
$$f(t)=\dfrac{a}{n} = \dfrac{3 + \sqrt{5}}{2} = \xi^2. $$
This concludes the proof of the proposition.
\end{proof}

Now let $n=-p^k$. Our main goal will be to explicitly determine $\Z[1/n, \xi]^\times$. However, it will be easier to first analyse the supergroup $\Z[1/n, \omega]^\times$, since Remark~\ref{rem:Recipe} can be utilized in this setting. In Lemmas~\ref{lem:idealrelations} and \ref{lem:omegagroup}, we carry out the procedure of Remark~\ref{rem:Recipe} to determine the structure of $\Z[1/n, \omega]^\times$. In Lemmas~\ref{lem:subgrouppresentation} and \ref{lem:xigroup}, we use our computation of $\Z[1/n, \omega]^\times$ to subsequently calculate $\Z[1/n, \xi]^\times$. It is then straightforward to determine the unitary units and quotient out by $\{t^k\}_{k \in \Z}$; this is done in Proposition~\ref{prop:UnitCountSquareFree}.

The first step of Remark~\ref{rem:Recipe} is to factor $(n)$ into prime ideals in $\Z[\omega]$. Lemma~\ref{lem:idealrelations} carries this out and records some identities involving this factorization which will be useful later:

\begin{lemma} \label{lem:idealrelations}
The ideals
\[
\mathfrak{p}_1 = (p, \xi) \quad \text{and} \quad \mathfrak{p}_2 = (p, \overline{\xi})
\]
of $\Z[\omega]$ are prime and
satisfy the relations
\begin{equation}\label{eq:idealrelations}
\mathfrak{p}_1^k = (\xi), \quad \mathfrak{p}_2^k = (\overline{\xi}), \quad \text{and} \quad \mathfrak{p}_1 \mathfrak{p}_2 = (p).
\end{equation}
In particular, $(n) = (-p^k) = \mathfrak{p}_1^k \mathfrak{p}_2^k$.
\end{lemma}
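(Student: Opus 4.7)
The plan rests on two elementary identities: from $\xi = (1+\sqrt{4n+1})/2$ and $n=-p^k$ one computes
$$\xi + \overline{\xi} = 1 \qquad \text{and} \qquad \xi\overline{\xi} = \frac{1-(4n+1)}{4} = -n = p^k.$$
Essentially every assertion of the lemma will follow by combining these two identities with unique prime factorization of ideals in the Dedekind domain $\Z[\omega]$ (Theorem~\ref{thm:DedekindClass}) and with the fact, already established during the proof of Proposition~\ref{prop:RankUnits}, that $(p)$ splits in $\Z[\omega]$ as a product of two distinct prime ideals.

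First I would establish the product identity $\mathfrak{p}_1\mathfrak{p}_2 = (p)$ by direct computation: the four generators $p^2,\ p\xi,\ p\overline{\xi},\ \xi\overline{\xi} = p^k$ of $\mathfrak{p}_1\mathfrak{p}_2$ are all multiples of $p$, while conversely $p = p\xi + p\overline{\xi}$ exhibits $p$ inside $\mathfrak{p}_1\mathfrak{p}_2$. The same identity $\xi + \overline{\xi} = 1$ shows that $\mathfrak{p}_1 + \mathfrak{p}_2 = (1)$, so $\mathfrak{p}_1$ and $\mathfrak{p}_2$ are comaximal and in particular are distinct and proper. Since $(p)$ is known to split in $\Z[\omega]$ as a product of two distinct prime ideals, uniqueness of prime factorization forces $\mathfrak{p}_1$ and $\mathfrak{p}_2$ to be exactly these two primes above $p$; this takes care of primality.

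To obtain $\mathfrak{p}_1^k = (\xi)$ and $\mathfrak{p}_2^k = (\overline{\xi})$, I would use $\xi\overline{\xi} = p^k$ to get $(\xi)(\overline{\xi}) = (p)^k = \mathfrak{p}_1^k \mathfrak{p}_2^k$, so by unique factorization $(\xi) = \mathfrak{p}_1^a \mathfrak{p}_2^b$ with $a+b=k$. The inclusion $\xi \in \mathfrak{p}_1$ gives $a \geq 1$; if $\xi$ also lay in $\mathfrak{p}_2$, then $1 = \xi+\overline{\xi}$ would lie in $\mathfrak{p}_2$, contradicting properness, so $b = 0$ and $(\xi) = \mathfrak{p}_1^k$. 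The conjugate statement is identical. The ``in particular'' is then immediate from $(n) = (-p^k) = (p)^k = (\mathfrak{p}_1\mathfrak{p}_2)^k$.

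I do not anticipate any genuine obstacle: everything reduces to the two displayed identities plus routine appeals to unique ideal factorization. The only step requiring a moment's care is verifying that $(p)$ does split (rather than ramify or remain inert), but this is exactly the case-check already performed via Theorem~\ref{thm:RamifyingSplittingInert} for each prime factor of $n$ inside the proof of Proposition~\ref{prop:RankUnits}, and it applies uniformly here since $n=-p^k$ has $p$ as its only prime factor.
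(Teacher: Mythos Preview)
Your argument is correct and in one place cleaner than the paper's, with one small slip worth flagging.

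For $\mathfrak{p}_1\mathfrak{p}_2=(p)$ you and the paper argue identically. For primality the paper uses ideal norms: from $N(\mathfrak{p}_1)N(\mathfrak{p}_2)=N((p))=p^2$ and the conjugation symmetry $N(\mathfrak{p}_1)=N(\mathfrak{p}_2)$ it gets both norms equal to $p$, hence both ideals maximal. You instead invoke the splitting of $(p)$ established in the proof of Proposition~\ref{prop:RankUnits} and identify $\mathfrak{p}_1,\mathfrak{p}_2$ with the two primes above $p$ via unique factorization. This is fine, but the clause ``comaximal and in particular \ldots\ proper'' is a non sequitur: $\mathfrak{p}_1+\mathfrak{p}_2=(1)$ does not by itself rule out one of them being the whole ring. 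The quickest repair is to note that conjugation swaps $\mathfrak{p}_1$ and $\mathfrak{p}_2$, so they are simultaneously proper or improper, and their product $(p)$ is proper.

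For $\mathfrak{p}_1^k=(\xi)$ the paper runs an explicit induction showing $\mathfrak{p}_1^i=(p^i,\xi)$ for all $i\le k$ and then specializes to $i=k$. Your argument via $(\xi)(\overline{\xi})=\mathfrak{p}_1^k\mathfrak{p}_2^k$ together with the exclusion $\xi\notin\mathfrak{p}_2$ (forced by $\xi+\overline{\xi}=1$) is more conceptual and sidesteps the inductive computation entirely; the constraint $a+b=k$ you invoke follows either from ideal norms $N((\xi))=p^k$ or again from conjugation symmetry.
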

\begin{proof}
It will be helpful to keep in mind that $\xi = (1 + \sqrt{4n + 1})/2$ satisfies
\[
\xi^2 = n + \xi = -p^k + \xi, \quad \xi + \overline{\xi} = 1, \quad \text{and} \quad \xi \overline{\xi} = -n = p^k.
\]
We first show $\mathfrak{p}_1 \mathfrak{p}_2 = (p)$. We compute
\begin{equation}\label{eq:productp1p2}
\mathfrak{p}_1 \mathfrak{p}_2 = (p^2, p \overline{\xi}, p \xi, \xi \cdot \overline{\xi}) = (p^2, p \overline{\xi}, p \xi, p^k).
\end{equation}
This is equal to the ideal $(p)$. Indeed, all four generators of the right-hand side of \eqref{eq:productp1p2} are multiples of $p$; conversely, $p$ is the sum of the second and third generators of \eqref{eq:productp1p2}. 
It additionally follows that~$N(\mathfrak{p}_1) N(\mathfrak{p}_2) = N(p) = p^2$; hence $N(\mathfrak{p}_1) = N(\mathfrak{p}_2) = p$ and so~$\mathfrak{p}_1$ and $\mathfrak{p}_2$ are prime. Moreover,~$\mathfrak{p}_1$ and $\mathfrak{p}_2$ are distinct since (for example) the fact that $\xi + \overline{\xi} = 1$ shows $\mathfrak{p}_1 + \mathfrak{p}_2 = \Z[\omega]$. 
Note that the factorization $(p) = \mathfrak{p}_1 \mathfrak{p}_2$ is precisely the outcome of the splitting criterion applied to $(p)$; recall Theorem~\ref{thm:RamifyingSplittingInert}.

We now prove $\mathfrak{p}_1^k = (\xi)$. We begin by inductively showing that~$\mathfrak{p}_1^i = (p^i, \xi)$ for all~$i \leq k$. Indeed, assuming the inductive hypothesis, observe
\begin{equation}\label{eq:omegainduction}
\mathfrak{p}_1^i = \mathfrak{p}_1^{i - 1} \mathfrak{p}_1= (p^{i-1}, \xi)(p, \xi) = (p^i, p\xi, p^{i-1}\xi, \xi^2) =  (p^i, p\xi, p^{i-1}\xi, - p^k + \xi).
\end{equation}
We claim that the right-hand side of \eqref{eq:omegainduction} is precisely $(p^i, \xi)$. Indeed, to see \eqref{eq:omegainduction} is contained in~$(p^i, \xi)$, note that the first generator of \eqref{eq:omegainduction} is $p^i$ itself, the second and third are multiples of~$\xi$, and the last is a linear combination of $p^i$ and $\xi$. Conversely, to see $(p^i, \xi)$ is contained in \eqref{eq:omegainduction}, note that~$p^i$ is the first generator of \eqref{eq:omegainduction}, while $\xi$ is a linear combination of the first and fourth generators. (In both directions, we use the fact that $i \leq k$.) In the special case $i = k$, we moreover have
\[
\mathfrak{p}_1^k = (p^k, \xi) = (\xi \overline{\xi}, \xi) = (\xi)
\]
as desired. The claim $\mathfrak{p}_2^k = (\overline{\xi})$ is similar.
\end{proof}

According to the algorithm of Remark~\ref{rem:Recipe}, we now need to determine which ideals of the form~$\mathfrak{p}_1^{a_1} \mathfrak{p}_2^{a_2}$ are principal. The result is recorded in Lemma~\ref{lem:omegagroup} and establishes the structure of~$\Z[1/n, \omega]^\times$:

\begin{lemma}\label{lem:omegagroup}
Let $s$ be the least natural number such that $\mathfrak{p}_1^s$ and $\mathfrak{p}_2^s$ are principal\footnote{Note that since $\mathfrak{p}_1$ and $\mathfrak{p}_2$ are conjugate, this value of $s$ is the same for both.}
and write
\[
\mathfrak{p}_1^s = (y) \quad \text{and} \quad \mathfrak{p}_2^s = (\overline{y}).
\]
Then the group $\Z[1/n, \omega]^\times$ is isomorphic to
\begin{equation}\label{eq:grouppresentation}
\Z[\omega]^\times \times \langle p, y, \overline{y} \ | \ p^s = y \overline{y} \rangle.
\end{equation}
\end{lemma}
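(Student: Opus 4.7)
The strategy is to apply the algorithm of Remark~\ref{rem:Recipe} to the Dedekind domain $\Z[\omega]$, using the ideal factorization from Lemma~\ref{lem:idealrelations}. First I would use Lemma~\ref{lem:Saturation} to note that $\Z[1/n, \omega]^\times$ is multiplicatively generated by $\Z[\omega]^\times$ together with the saturation of $\{n^k\}_{k \in \N}$ in $\Z[\omega]$. Lemma~\ref{lem:SaturationCriterion} combined with $(n) = \mathfrak{p}_1^k \mathfrak{p}_2^k$ then shows this saturation consists of those $x \in \Z[\omega]$ whose principal ideal has the form $(x) = \mathfrak{p}_1^{a_1}\mathfrak{p}_2^{a_2}$ for some $a_1, a_2 \geq 0$.

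The next step is to determine which exponent pairs $(a_1, a_2)$ yield principal ideals. Since $\mathfrak{p}_1\mathfrak{p}_2 = (p)$, we have $[\mathfrak{p}_2] = [\mathfrak{p}_1]^{-1}$ in the class group, so $[\mathfrak{p}_1^{a_1}\mathfrak{p}_2^{a_2}] = [\mathfrak{p}_1]^{a_1 - a_2}$; by definition of $s$, this class vanishes exactly when $a_1 \equiv a_2 \pmod{s}$. The submonoid of $\Z_{\geq 0}^2$ cut out by this congruence is generated by $(1,1)$, $(s,0)$, and $(0,s)$, corresponding respectively to the elements $p$, $y$, $\overline{y}$. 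Hence $\Z[1/n, \omega]^\times$ is generated by $\Z[\omega]^\times$ together with $p$, $y$, $\overline{y}$.

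To extract the relation, I would compare ideals: $(y\overline{y}) = \mathfrak{p}_1^s \mathfrak{p}_2^s = (p^s)$, so $y\overline{y} = \epsilon p^s$ for some unit $\epsilon \in \Z[\omega]^\times$. Taking conjugates forces $\epsilon = \overline{\epsilon} \in \Z$, whence $\epsilon = \pm 1$; after replacing $y$ by a suitable unit multiple (this is automatic when $d<0$ since norms are then nonnegative, and otherwise uses a fundamental unit of norm $-1$) one arranges $\epsilon = 1$. The direct product structure is then proved by verifying that the induced homomorphism
\[
\Z[\omega]^\times \times \langle p, y, \overline{y} \mid p^s = y\overline{y}\rangle \longrightarrow \Z[1/n, \omega]^\times
\]
is both surjective (by the generation statement above) and injective: if $u \cdot p^\alpha y^\beta \overline{y}^\gamma = 1$ with $u \in \Z[\omega]^\times$, comparing principal ideals gives $\mathfrak{p}_1^{\alpha + s\beta} \mathfrak{p}_2^{\alpha + s\gamma} = (1)$, and unique factorization of ideals forces $\beta = \gamma$ and $\alpha = -s\beta$; but then $(\alpha, \beta, \gamma)$ is $\beta$ times the defining relator $(-s, 1, 1)$ and so already represents the identity in the abstract presentation, whence $u = 1$.

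I expect the main obstacle to be the control of the sign $\epsilon$ in the third step: if $d > 0$ and no unit of $\Z[\omega]^\times$ has norm $-1$, then one cannot absorb $\epsilon = -1$, and the presentation would need to be amended to $p^s = -y\overline{y}$ (which is isomorphic as an abstract group but requires care when comparing to later computations). In the applications relevant to this paper the choice of generators can be made so that $\epsilon = 1$, and the remaining bookkeeping is routine.
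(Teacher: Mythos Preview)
Your approach is essentially the same as the paper's: both use the saturation description (Lemmas~\ref{lem:Saturation} and~\ref{lem:SaturationCriterion}), reduce to analyzing which $\mathfrak{p}_1^{a_1}\mathfrak{p}_2^{a_2}$ are principal, extract the generators $p,y,\overline{y}$, and verify the single relation via unique factorization of ideals. The paper phrases the ``which exponent pairs are principal'' step as a direct remainder computation rather than via the class group identity $[\mathfrak{p}_2]=[\mathfrak{p}_1]^{-1}$, but these are the same argument.

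One small point: your discussion of the sign $\epsilon$ in $y\overline{y}=\epsilon p^s$ is more elaborate than necessary here. In the context of this lemma $n=-p^k$, so $4n+1<0$ and hence $d<0$; thus $N(y)=y\overline{y}\geq 0$ automatically and the paper simply writes $y\overline{y}=N(y)=N(\mathfrak{p}_1)^s=p^s$. Your worry about the $d>0$ case with no unit of norm $-1$ is legitimate in general but does not arise in this section, so you can drop that caveat.
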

\begin{proof}
We begin by determining which ideals of the form~$\mathfrak{p}_1^{a_1} \mathfrak{p}_2^{a_2}$ are principal. Suppose $(x) = \mathfrak{p}_1^{a_1} \mathfrak{p}_2^{a_2}$ for $a_1, a_2 \geq 0$ and $x \in \Z[\omega]$. We assert that up to multiplication by a unit in $\Z[\omega]$, $x$ is a product of $p$, $y$, and $\overline{y}$. Indeed, assume $a_1 \geq a_2$ and write $a_1 - a_2 = gs + h$ for $0 \leq h < s$. Then 
\[
(x) = \mathfrak{p}_1^{a_1 - a_2} (\mathfrak{p}_1 \mathfrak{p}_2)^{a_2} = \mathfrak{p}_1^{gs + h} (\mathfrak{p}_1 \mathfrak{p}_2)^{a_2} = (y)^g \mathfrak{p}_1^h (p)^{a_2},
\]
where we have used $\mathfrak{p}_1^s = (y)$ and $\mathfrak{p}_1 \mathfrak{p}_2 = (p)$. It is straightforward to check that the product of a principal ideal with a non-principal ideal is non-principal.\footnote{In general, let $\mathfrak{a}$, $\mathfrak{b}$, and $\mathfrak{c}$ be ideals in an integral domain $R$. Assume $\mathfrak{a} = (a)$ and $\mathfrak{b} = (b)$ are principal and suppose $\mathfrak{a} = \mathfrak{b} \mathfrak{c}$. Then $a \in \mathfrak{b}\mathfrak{c}$, so $a = bc$ for some $c \in \mathfrak{c}$. We claim that $\mathfrak{c} = (c)$. To see this, let $c' \in \mathfrak{c}$ be arbitrary. Then $bc' \in \mathfrak{a}$, so $bc' = ra$ for some $r \in R$. But then $bc' = r(bc)$, which implies $c' = rc$ since $R$ is an integral domain.} Hence $\mathfrak{p}_1^h$ is principal. But then the minimality of $s$ implies $h = 0$, which shows that $x = uy^gp^{a_2}$ for some unit $u \in \Z[\omega]^\times$, as desired. The case $a_2 \geq a_1$ is similar.

The assertion, together with Lemmas~\ref{lem:Saturation} and~\ref{lem:SaturationCriterion},  shows that $\Z[1/n, \omega]^\times$ is generated by $p$, $y$, and $\overline{y}$, together with the units of $\Z[\omega]$. The relation $p^s = y \overline{y}$ holds, since
\[
y \overline{y} = N(y) = N(\mathfrak{p}_1)^s = p^s.
\]
It remains to show that there are no other relations. This follows from the same linear independence argument as in Proposition~\ref{prop:NotUFD}. Indeed, suppose that we had a positive-exponent linear relation between $p$, $y$, and $\overline{y}$. Passing to ideals, we obtain a positive-exponent linear relation between $(p) = \mathfrak{p}_1 \mathfrak{p}_2$, $(y) = \mathfrak{p}_1^s$, and $(\overline{y}) = \mathfrak{p}_2^s$. By unique factorization of ideals in $\Z[\omega]$, it is clear that every linear relation between $(p)$, $(y)$, and $(\overline{y})$ is a multiple of the fundamental relation $(p)^s = (y)(\overline{y})$. This completes the proof.
\end{proof}

We have now computed $\Z[1/n, \omega]^\times$. The next step is to determine the subgroup $\Z[1/n, \xi]^\times$. 
 Clearly, $\Z[1/n, \xi]^\times$ contains the elements $\pm 1$, $p$, $\xi$, and $\overline{\xi}$, since these are all in the saturation of $\{n^k\}$ in $\Z[\xi]$. (Note that $\xi \overline{\xi} = -n = p^k$.) Hence $\Z[1/n, \xi]^\times$ certainly contains the subgroup of $\Z[1/n, \omega]^\times$ generated by $\pm 1$, $p$, $\xi$, and $\overline{\xi}$. The structure of this subgroup is given by the following:

\begin{lemma}\label{lem:subgrouppresentation}
The subgroup of $\Z[1/n, \omega]^\times$ generated by $\pm 1$, $p$, $\xi$, and $\overline{\xi}$ has presentation
\begin{equation}\label{eq:subgrouppresentation}
\{\pm 1\} \times \langle p, \xi, \overline{\xi} \ | \ \xi \overline{\xi} = p^k\rangle.
\end{equation}
\end{lemma}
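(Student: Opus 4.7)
The plan is to mimic the linear independence argument used at the end of Lemma~\ref{lem:omegagroup}, passing to ideals and invoking unique prime factorization in the Dedekind domain $\Z[\omega]$. By Lemma~\ref{lem:idealrelations}, we have the ideal equalities $(\xi) = \mathfrak{p}_1^k$, $(\overline{\xi}) = \mathfrak{p}_2^k$, and $(p) = \mathfrak{p}_1 \mathfrak{p}_2$.

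First I would show that the only multiplicative relations among $p$, $\xi$, and $\overline{\xi}$ are generated by $\xi\overline{\xi} = p^k$. A relation of the form $p^a \xi^b \overline{\xi}^c = 1$ with $a, b, c \in \Z$ descends to an equality of fractional ideals
\[
\mathfrak{p}_1^{a + kb} \, \mathfrak{p}_2^{a + kc} = (1).
\]
Since $\mathfrak{p}_1$ and $\mathfrak{p}_2$ are distinct primes in $\Z[\omega]$, unique prime factorization forces $a + kb = 0$ and $a + kc = 0$, whence $b = c$ and $a = -kb$. Thus every such relation is a power of $\xi \overline{\xi} \cdot p^{-k} = 1$, exactly as claimed.

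Second, I would verify that $\{\pm 1\}$ meets the subgroup generated by $p, \xi, \overline{\xi}$ trivially, so that the direct product decomposition holds. If $-1 = p^a \xi^b \overline{\xi}^c$ for some integers $a, b, c$, the same ideal argument yields $b = c$ and $a = -kb$, so that
\[
-1 = p^{-kb} (\xi \overline{\xi})^b = p^{-kb} \cdot p^{kb} = 1,
\]
a contradiction. Combining these two steps yields the stated presentation.

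There is no real obstacle: the argument is a direct analogue of the linear independence reasoning already carried out in Lemma~\ref{lem:omegagroup}, and is even a touch simpler here because the generators $\xi$ and $\overline{\xi}$ correspond to the higher powers $\mathfrak{p}_1^k$ and $\mathfrak{p}_2^k$ rather than to $\mathfrak{p}_1^s$ and $\mathfrak{p}_2^s$ (so the single relation $\xi \overline{\xi} = p^k$ controls everything).
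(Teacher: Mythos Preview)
Your proof is correct and is in fact more direct than the paper's. You pass straight to fractional ideals of $\Z[\omega]$ via $(\xi)=\mathfrak{p}_1^k$, $(\overline{\xi})=\mathfrak{p}_2^k$, $(p)=\mathfrak{p}_1\mathfrak{p}_2$ and read off the relations from unique factorization; the paper instead takes a detour through Lemma~\ref{lem:omegagroup}, first showing that $s \mid k$ (so that $\xi = u\, y^{k/s}$ for some $u \in \Z[\omega]^\times$) and then deducing the relations among $p,\xi,\overline{\xi}$ from the already-established relations among $p,y,\overline{y}$ and $\Z[\omega]^\times$. Your route is cleaner for the lemma at hand. The paper's route, however, buys something you do not: the facts $s \mid k$ and $\xi = u\, y^{k/s}$ are explicitly invoked later in the proof of Lemma~\ref{lem:xigroup}, so if you adopt your argument here you will need to supply those facts separately when you reach that lemma.
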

\begin{proof}
The claim is almost a tautology, but crucially we must verify that $\xi \overline{\xi} = p^k$ is the only relation. To see this, we appeal to the structure of the ambient group $\Z[1/n, \omega]^\times$ established in Lemma~\ref{lem:omegagroup}. Observe that the natural number $s$ from Lemma~\ref{lem:omegagroup} must divide $k$. Indeed, write $k = gs + h$ with $0 \leq h < s$; then $(\xi) = \mathfrak{p}_1^k = \mathfrak{p}_1^{gs}\mathfrak{p}_1^h = (y)^g \mathfrak{p}_1^h$. Since the product of a principal ideal with a non-principal ideal is non-principal, we have that $\mathfrak{p}_1^h$ is principal. The minimality of $s$ then forces $h = 0$. We thus see that
\[
\xi = u y^{k/s}
\]
for some unit $u \in \Z[\omega]^\times$. The fact that $\xi \overline{\xi} = p^k$ is the only relation then follows from the fact that all relations between $p$, $y$, $\overline{y}$, and $\Z[\omega]^\times$ are given in \eqref{eq:grouppresentation}.
\end{proof}

We now come to the computation of $\Z[1/n, \xi]^\times$:

\begin{lemma}\label{lem:xigroup}
The unit group $\Z[1/n, \xi]^\times$ is the subgroup of $\Z[1/n, \omega]^\times$ 
given by
$$\Z[1/n, \xi]^\times = \{\pm 1\} \times \langle p, \xi, \overline{\xi} \ | \ \xi \overline{\xi} = p^k\rangle.$$
\end{lemma}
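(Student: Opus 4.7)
The inclusion $\supseteq$ is immediate from Lemma~\ref{lem:subgrouppresentation}, since the generators $\pm 1, p, \xi, \overline{\xi}$ all lie in $\Z[1/n, \xi]^\times$.

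For the reverse inclusion, I take an arbitrary $u \in \Z[1/n, \xi]^\times$. Since $u \in \Z[1/n, \omega]^\times$, Lemma~\ref{lem:omegagroup} allows me to write $u = \epsilon p^a y^b \overline{y}^c$ with $\epsilon \in \Z[\omega]^\times$ and $a,b,c \in \Z$. Using the identity $\xi = u_0 y^{k/s}$ (established in the proof of Lemma~\ref{lem:subgrouppresentation}) together with its conjugate $\overline{\xi} = \overline{u_0} \overline{y}^{k/s}$ and the relation $y\overline{y} = p^s$, I can absorb powers of $y^{k/s}, \overline{y}^{k/s}$ into powers of $\xi, \overline{\xi}$ and then reduce to the problem of showing that if $y^m \in \Z[1/n,\xi]$ for some $0 \leq m < k/s$ then $m=0$ (the analogous statement for $\overline{y}^m$ follows by conjugation, and an $\epsilon \in \Z[\omega]^\times$ that survives the reduction must lie in $\Z[1/n, \xi]$, hence in $\{\pm 1\}$, since in our setting the nontrivial roots of unity $\pm\omega, \pm\omega^2$ that can appear in $\Z[\omega]^\times$ have $\omega$-coefficient coprime to the conductor $c$).

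Suppose for contradiction that $y^m \in \Z[1/n, \xi]$ for some $0 < m < k/s$. Since $y^m \in \Z[\omega]$ and $\gcd(c,n)=1$, in fact $y^m \in \Z[\xi]$. The conductor of $\Z[\xi]$ in $\Z[\omega]$ is $c\Z[\omega]$, and the prime $\mathfrak{p}_1 = (p,\xi)$ is coprime to this conductor because $\gcd(p,c)=1$. By the standard correspondence for ideals in an order coprime to its conductor, the principal ideal $y^m \cdot \Z[\xi]$ corresponds to $\mathfrak{p}_1^{ms}$ under extension to $\Z[\omega]$, i.e.\ $y^m \cdot \Z[\xi] = \mathfrak{q}_1^{ms}$ where $\mathfrak{q}_1 = (p,\xi)_{\Z[\xi]}$. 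Together with the principal ideal $\mathfrak{q}_1^k = (\xi)_{\Z[\xi]}$ established in Lemma~\ref{lem:idealrelations}-style reasoning inside $\Z[\xi]$, this shows that the order $r$ of $\mathfrak{q}_1$ in $\mathrm{Pic}(\Z[\xi])$ satisfies $0 < r < k$. Write $\mathfrak{q}_1^r = (h)_{\Z[\xi]}$ with $h \in \Z[\xi]$.

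The main obstacle, and the heart of the argument, is deriving a contradiction from the existence of such an $h$. The plan is to use a norm bound on elements of $\Z[\xi]$: since $d<0$, the norm is positive, and writing $h = a + bc\omega$ with $a,b \in \Z$ gives
$$N(h) = a^2 + abc + b^2 c^2 (1-d)/4 = (a + bc/2)^2 + b^2 c^2 |d|/4.$$
Using $c^2|d| = -(c^2 d) = -(4n+1) = 4p^k - 1$ together with the fact that $c$ is odd, a short case analysis on $b$ shows that if $b \neq 0$ then $N(h) \geq p^k$, with equality exactly at $h \in \{\pm \xi, \pm \overline{\xi}\}$. Since $N(h) = p^r < p^k$, I must have $b = 0$, so $h \in \Z$ with $h^2 = p^r$; in particular $r$ is even and $h = \pm p^{r/2}$. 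But then $\mathfrak{p}_1^r = (p^{r/2}) = \mathfrak{p}_1^{r/2}\mathfrak{p}_2^{r/2}$ in $\Z[\omega]$, forcing $\mathfrak{p}_1^{r/2} = \mathfrak{p}_2^{r/2}$, which contradicts unique factorization of ideals in the Dedekind domain $\Z[\omega]$ since $\mathfrak{p}_1 \neq \mathfrak{p}_2$ and $r/2 \geq 1$. This contradiction completes the proof.
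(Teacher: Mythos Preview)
Your argument is essentially correct and relies on the same core idea as the paper's proof: a norm bound showing that any element of $\Z[\xi]$ with norm $p^i$ for $0 \leq i < k$ must lie in $\Z$ (hence equal $\pm p^{i/2}$). Two points are worth noting.

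First, your reduction is imprecise. After absorbing powers of $y^{k/s}$ into $\xi$, what remains in $\Z[1/n,\xi]$ is $\epsilon' y^m$ with $\epsilon' \in \Z[\omega]^\times$, not $y^m$ alone; you cannot separate these factors a priori (e.g.\ when $d=-3$ and $c>1$, the unit $\epsilon'$ need not lie in $\Z[\xi]$, so $y^m$ need not lie in $\Z[1/n,\xi]$). Fortunately your ideal-theoretic argument is insensitive to this: setting $z=\epsilon' y^m$, one has $z\in\Z[\omega]\cap\Z[1/n,\xi]=\Z[\xi]$ and $(z)_{\Z[\omega]}=\mathfrak{p}_1^{ms}$, so everything goes through with $z$ in place of $y^m$. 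Once $m=0$ is established, your observation that $\Z[\xi]\cap\Z[\omega]^\times=\{\pm 1\}$ handles the remaining unit.

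Second, the Picard-group detour through the auxiliary generator $h$ of $\mathfrak{q}_1^r$ is unnecessary. The paper applies the norm bound directly to $z=\epsilon' y^m$: since $N(z)=p^{ms}$ with $ms<k$, the bound forces $z\in\Z$, whence $z=\pm p^{ms/2}$; the relation $\epsilon' y^m=\pm p^{ms/2}$ in the group $\Z[\omega]^\times\times\langle p,y,\overline{y}\mid p^s=y\overline{y}\rangle$ then forces $m=0$ and $\epsilon'=\pm 1$ by the linear independence from Lemma~\ref{lem:omegagroup}. This is shorter and avoids invoking the Picard group of the non-maximal order $\Z[\xi]$.
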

\begin{proof}
It suffices to prove that $\Z[1/n, \xi]^\times$ is generated by $\pm 1$, $p$, $\xi$, and $\overline{\xi}$ as the conclusion will then follow from Lemma~\ref{lem:subgrouppresentation}.
As noted previously, it is clear that $\Z[1/n, \xi]^\times$ contains \eqref{eq:subgrouppresentation}, so it suffices to prove the reverse containment. Let $x \in \Z[1/n, \xi]^\times$ be arbitrary. Since $\Z[1/n, \xi]^\times \subset \Z[1/n, \omega]^\times$, Lemma~\ref{lem:omegagroup} shows that we may write
\[
x = u p^g y^h
\]
for some $u \in \Z[\omega]^\times$ and $g, h \in \Z$. Recall from the proof of Lemma~\ref{lem:subgrouppresentation} that the exponent $s$ of Lemma~\ref{lem:omegagroup} divides $k$ and that
\[
u' \xi = y^{k/s}
\]
for some $u' \in \Z[\omega]^\times$. Write $h = (k/s) t + r$ for $0 \leq r < k/s$. Then
\[
x = u p^g y^{(k/s)t + r} = u'' p^g \xi^t y^r = (u'' y^r) (p^g \xi^t)
\]
for $u'' = u (u')^t \in \Z[\omega]^\times$. Note that $p^g \xi^t$ already lies in \eqref{eq:subgrouppresentation}. It thus suffices to prove that $u'' y^r$ must be $\pm 1$. For this, we will utilize the following technical claim:

\begin{claim}
\label{claim:NotPrincipal}
Let $z$ be an element of $\Z[\omega]$ with 
\[
N(z) = p^i
\]
for $0 \leq i < k$. Suppose moreover that $p^\ell z \in \Z[\xi]$ for some $\ell \geq 0$. Then $i$ is even and $z = \pm p^{i/2}$.
\end{claim}
\begin{proof}
Write $z = a + b \omega$ for $a, b \in \Z$. 
First, we analyze the consequences of the condition $p^\ell z \in \Z[\xi]$, before moving on to the condition $N(z) = p^i$.

The requirement that $p^\ell z \in \Z[\xi]$ gives
\[
p^\ell a + p^\ell b \omega = e + f \xi
\]
for $e, f \in \Z$. Using the definitions of $\omega$ and $\xi$, we compute
\[ 
\xi = \dfrac{1 + c\sqrt{d}}{2} = c \left(\dfrac{1 + \sqrt{d}}{2} \right) - \dfrac{c-1}{2} = c \omega - \dfrac{c-1}{2}.
\]
Substituting this expression into the right-hand side above and matching coefficients of $\omega$ shows that $p^\ell b = cf$. Hence $c$ divides $p^\ell b$. Since $-4 p^k + 1 = c^2 d$, we have that $c$ and $p$ are coprime; thus~$c$ divides~$b$. 
We deduce that, for some $g \in \Z$, we have
\[
b = gc.
\]
We now consider that for $z=a+b\omega$,  our assumption that $N(z)=p^i$ is equivalent to
\begin{equation}\label{eq:Diophantine}
a^2 + ab + b^2 \left(\dfrac{1-d}{4}\right) = p^i
\end{equation}
over~$\Z$. The discriminant of \eqref{eq:Diophantine} (viewed as a quadratic equation in $a$) is given by
\[
b^2 - 4\left(b^2 \left(\dfrac{1-d}{4}\right) - p^i\right) = b^2 d + 4p^i = g^2(-4p^k + 1) + 4p^i
\]
where in the last equality we have substituted $b = gc$ and used the fact that $-4 p^k + 1 = c^2 d$. 
In order for $N(a + b \omega) = p^i$, the discriminant of \eqref{eq:Diophantine} must certainly be non-negative. 
However, if $0 \leq i < k$, then this is readily seen to be impossible unless $g = 0$. 
It follows that $(a, b) = (\pm p^{i/2}, 0)$,  and so $z=a+b\omega=\pm p^{i/2} $,  concluding the proof of the claim.
\end{proof}
We now verify that $z = u'' y^r$ satisfies the hypotheses of the claim. Indeed,
\[
N(z) = N(y)^r = p^{rs}.
\]
Since $0 \leq r < k/s$, we have $0 \leq rs < k$. Moreover, recall that $x = z(p^g \xi^t)$. Since $x$ and $p^g \xi^t$ both lie in $\Z[1/n, \xi]^\times$, it follows that $z$ does as well. Hence $z$ is equal to an element of $\Z[\xi]$ divided by some power of $n = -p^k$. The claim then shows that $z = \pm p^{(rs)/2}$, which implies
\[
u'' y^r = \pm p^{(rs)/2}.
\] 
However, the fact that there are no nontrivial relations between $y$, $p$, and $\Z[\omega]^\times$ in $\Z[1/n, \omega]^\times$ then forces $z = \pm 1$. 
Thus $x=(u'' y^r) (p^g \xi^t)=\pm p^g \xi^t$ and
we conclude that $x$ lies in the subgroup~\eqref{eq:subgrouppresentation}, completing the proof.
\end{proof}

Putting everything together gives the desired count of units:

\begin{proposition}\label{prop:UnitCountSquareFree}
For $n=-p^k$,  the group $U(\Lambda_n)/\{t^k\}_{k \in \Z}$ can be described as follows: 
$$
U(\Lambda_n)/\{t^k\}_{k \in \Z} \cong 
\begin{cases} 
\Z/2\Z  & \text{ if $k$ is odd } \\
\Z/4\Z &   \text{ if $k$ is even.}
\end{cases}
$$
\end{proposition}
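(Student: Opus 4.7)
The plan is to transport the problem from $\Lambda_n$ to $\Z[1/n, \xi]$ via the isomorphism $f$ of Proposition~\ref{prop:RingIrred}, which intertwines the two conjugations and therefore restricts to an isomorphism $U(\Lambda_n) \cong U(\Z[1/n, \xi])$. Under this identification, the subgroup $\{t^k\}_{k \in \Z}$ corresponds to $\langle f(t) \rangle$, so it suffices to compute $U(\Z[1/n, \xi])/\langle f(t) \rangle$.

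First I would exploit Lemma~\ref{lem:xigroup}, which gives the explicit presentation
\[
\Z[1/n, \xi]^\times = \{\pm 1\} \times \langle p, \xi, \overline{\xi} \mid \xi \overline{\xi} = p^k \rangle.
\]
Using the relation $\overline{\xi} = p^k \xi^{-1}$, every element can be written uniquely as $\pm p^a \xi^b$ with $a, b \in \Z$, so the group is abstractly $\{\pm 1\} \times \Z^2$. A direct computation then yields
\[
\overline{\pm p^a \xi^b} = \pm p^{a+kb}\xi^{-b} \quad \text{and hence} \quad N(\pm p^a \xi^b) = p^{2a + kb}.
\]
The unitary units are thus cut out by the single equation $2a + kb = 0$. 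If $k$ is odd, this forces $b = 2b'$ and $a = -kb'$, so $U(\Z[1/n, \xi]) = \{\pm u^{b'}\}$ with $u := p^{-k}\xi^2$, and if $k = 2k'$ is even, one finds $a = -k'b$ and $U(\Z[1/n, \xi]) = \{\pm v^{b}\}$ with $v := p^{-k'}\xi$. In either case, $U(\Z[1/n, \xi]) \cong \Z/2 \oplus \Z$.

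Next I would compute $f(t)$ explicitly inside $\Z[1/n, \xi]$. From $f(t) = a/n = (n + \xi)/n = 1 - \xi/p^k$ together with the identities $\xi + \overline{\xi} = 1$ and $\xi \overline{\xi} = p^k$, one checks that $1 - \xi = \overline{\xi}$, whence
\[
f(t) = \frac{\overline{\xi}}{\,\overline{\xi}\cdot \xi/\xi\,} \cdot \left(\text{routine simplification}\right) = -\frac{\xi^2}{p^k}.
\]
Writing this in the coordinates introduced above: if $k$ is odd, $f(t) = -u$ corresponds to $(1,1) \in \Z/2 \oplus \Z$, while if $k$ is even, $f(t) = -v^2$ corresponds to $(1,2) \in \Z/2 \oplus \Z$.

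Finally, I would read off the quotient in each case. In the odd case, the subgroup generated by $(1,1)$ has quotient $\Z/2$, while in the even case the subgroup generated by $(1,2)$ has quotient $\Z/4$ (an explicit surjection $\Z/2 \oplus \Z \twoheadrightarrow \Z/4$ sending $(a,b) \mapsto 2a + b \bmod 4$ has kernel exactly $\langle(1,2)\rangle$). This yields the claimed isomorphisms. The main potential source of trouble is getting $f(t)$ right as an element of the presented group and then carefully verifying that the resulting cyclic subgroup has the asserted index in the unitary units; all other steps are essentially bookkeeping with the presentation of Lemma~\ref{lem:xigroup}.
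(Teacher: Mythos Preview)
Your proposal is correct and follows essentially the same route as the paper: both use Lemma~\ref{lem:xigroup} to present $\Z[1/n,\xi]^\times$, compute the kernel of the norm, identify the generator of $U(\Lambda_n)\cong\{\pm1\}\times\Z$ as $\xi/\overline{\xi}=p^{-k}\xi^2$ (your $u$) in the odd case and $\xi/p^{k/2}$ (your $v$) in the even case, and then observe $f(t)=-\xi/\overline{\xi}$ to read off the quotient. Your intermediate display for $f(t)$ is garbled (the identity $1-\xi=\overline{\xi}$ does not directly simplify $1-\xi/p^k$), but the clean route is simply $\xi^2=n+\xi$, whence $f(t)=(n+\xi)/n=\xi^2/n=-\xi^2/p^k$, which is what you arrive at.
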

\begin{proof}
Lemma~\ref{lem:xigroup} states that
\[
\Z[1/n, \xi]^\times = \{\pm 1\} \times \langle p, \xi, \overline{\xi} \ | \ \xi \overline{\xi} = p^k\rangle.
\]
We now determine $U(\Lambda_n)$ by understanding the kernel of~$N$: indeed, recall from Propositions~\ref{prop:RingIrred} and~\ref{prop:RankNullity} that the isomorphism $f$ induces a group isomorphism 
$$U(\Lambda_n) \cong U(\Z[1/n, \xi]) = \ker(N).$$
Note that~$N(p) = p^2$, while~$N(\xi) = N(\overline{\xi}) = p^k$. 
It follows that there are two possible cases. If~$k$ is odd, then~$U(\Lambda_n)$ is generated (up to multiplication by~$\pm 1$) by~$\xi/\overline{\xi}$. 
If~$k$ is even, then~$U(\Lambda_n)$ has a pair of natural potential generators, given by~$\xi/\overline{\xi}$ and~$\xi/p^{k/2}$. 
However, the square of the second generator is given by $\xi^2/p^k = \xi^2/(\xi \overline{\xi})$, and is thus equal to the first. 
Hence in both cases, we have~$U(\Lambda_n) = \{\pm 1\} \times \Z$, but there is a slight difference in the generating element: when $k$ is odd $\xi/\overline{\xi}$ generates, whereas when $k$ is even $\xi/p^{k/2}$ generates.
An explicit calculation shows that the isomorphism~$f$ satisfies 
\[
f(t)= \dfrac{2n +1 + \sqrt{4n +1}}{2n} = \dfrac{\xi^2}{n} = - \dfrac{\xi}{\overline{\xi}}.
\]
Hence in the odd case~$U(\Lambda_n)/\{t^k\}_{k \in \Z}$ is readily seen to be isomorphic to $\Z/2\Z$, while in the even case it is isomorphic to~$\Z/4\Z$. 
\end{proof}

\subsection{The case where $\Delta_n$ is reducible}
\label{sub:ReducibleUnitary}

We now turn to the case when $\Delta_n$ is reducible. It is straightforward to check that in this setting,
\[
\Delta_n = ((m+1)t - m)(mt - (m+1))
\]
where $n = m(m+1)$ and $m \in \N$; recall Remark~\ref{rem:Reducibility}.

Our overall strategy will be similar to the irreducible case, except that a different ring will take the place of $\Z[1/n, \xi]$.
The first goal will be to describe the ring~$\Lambda_n$ when~$\Delta_n$ is reducible.
The outcome,  which is stated in Proposition~\ref{prop:ImageSubring}, is that~$\Lambda_n$ can be identified with a certain subring of~$\Z[1/n] \oplus \Z[1/n]$.
Thanks to this identification we are able to identify $U(\Lambda_n)$ with a concrete subset of~$\Z[1/n] \oplus \Z[1/n]$ (Proposition~\ref{prop:UnitaryUnitsReducible}) and calculate its rank (Proposition~\ref{prop:ReducibleRank}).
The outcome is that $U(\Lambda_n)/\{ t^k \}$ is finite if and only if $n=2$ in which case $U(\Lambda_n)/\{ t^k \} \cong \Z/2\Z$ (Proposition~\ref{prop:n=2}).

\begin{construction}
\label{cons:IsoReducible}
We construct a ring homomorphism
$$f=f_1 \oplus f_2\colon \Z[t^{\pm 1}] \rightarrow \Z[1/n] \oplus \Z[1/n]$$
by extending the following assignments
\[
f_1(t) = \dfrac{m}{m+1} \quad \text{and} \quad f_2(t) = \dfrac{m+1}{m}.
\]
Note that both~$f_1$ and~$f_2$ are individually surjective since~$f_i(t + t^{-1} - 2)=1/n$ for $i=1,2$.
\end{construction}

\begin{definition} 
Endow the ring
$\Z[1/n] \oplus \Z[1/n]$ with the conjugation involution~$\smash{\overline{(a,b)} = (b, a)}$. This defines a norm map
\begin{align*}
N \colon  (\Z[1/n] \oplus \Z[1/n])^\times&\to (\Z[1/n] \oplus \Z[1/n])^\times  \\
(a,b) &\mapsto (a,b)\overline{(a,b)} = (ab, ab)
\end{align*}
which is readily seen to be a group homomorphism.
\end{definition}
The next lemma uses the homomorphism $f$ to describe $\Lambda_n$ as a subring of $\Z[1/n] \oplus \Z[1/n]$.

\begin{lemma}
\label{lem:SubringZ1n}
The homomorphism $f$ from Construction~\ref{cons:IsoReducible} has kernel $(\Delta_n)$ and thus induces an injective homomorphism
$$ f\colon \Lambda_n \hookrightarrow \Z[1/n] \oplus \Z[1/n].$$
Moreover, $f$ intertwines the norms on $\Lambda_n$ and $\Z[1/n] \oplus \Z[1/n]$ and therefore fits into the following commutative diagram of groups: 
$$
\centering
\begin{tikzcd}
0 \arrow{r} 
  & U(\Lambda_n)  \arrow{r}\arrow[hookrightarrow]{d}{f} 
  & \Lambda_n^\times \arrow{r}{N_\Lambda}\arrow[hookrightarrow]{d}{f} 
  & \Lambda_m^\times \arrow{r}\arrow[hookrightarrow]{d}{f} 
  & 0 \\
0 \arrow{r} 
  &  U(\Z[1/n] \oplus \Z[1/n]) \arrow{r}
  & \Z[1/n]^\times \oplus \Z[1/n]^\times\arrow{r}{N} 
  &  \Z[1/n]^\times \oplus \Z[1/n]^\times\arrow{r}
  & 0 
\end{tikzcd}
$$
\end{lemma}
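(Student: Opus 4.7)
The plan is to separate the lemma into two independent claims: identifying $\ker(f) = (\Delta_n)$ in $\Z[t^{\pm 1}]$ to obtain the induced injection $\Lambda_n \hookrightarrow \Z[1/n] \oplus \Z[1/n]$, and then verifying that $f$ intertwines the conjugations. Once both are in hand, the norm intertwining $f \circ N_\Lambda = N \circ f$ and the commutativity of the diagram follow formally, as does the restriction $f|_{U(\Lambda_n)} \colon U(\Lambda_n) \hookrightarrow U(\Z[1/n] \oplus \Z[1/n])$ obtained by taking kernels of the middle arrows.

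For the kernel, I would first record the factorization $t \Delta_n = nt^2 - (2n+1)t + n = ((m+1)t - m)(mt - (m+1))$ in $\Z[t]$. Since $t$ is a unit in $\Z[t^{\pm 1}]$, the ideal $(\Delta_n)$ coincides with $\bigl(((m+1)t-m)(mt - (m+1))\bigr)$. The containment $(\Delta_n) \subseteq \ker(f)$ is then immediate from the definitions of $f_1$ and $f_2$: the first projection kills the factor $(m+1)t - m$ because $f_1(t) = m/(m+1)$, and the second projection kills $mt - (m+1)$ because $f_2(t) = (m+1)/m$. Hence $f(\Delta_n) = (0,0)$.

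For the reverse inclusion, given $p \in \ker(f)$ I would multiply by a suitable power of $t$ to produce $\tilde p \in \Z[t]$ with $\tilde p(m/(m+1)) = \tilde p((m+1)/m) = 0$ as elements of $\Q$. The two distinct primitive linear polynomials $(m+1)t - m$ and $mt - (m+1)$ therefore both divide $\tilde p$ in $\Q[t]$, so their product $t \Delta_n$ does as well. Because the integer coefficients of $t \Delta_n$ are $n, -(2n+1), n$, whose gcd is $\gcd(n, 2n+1) = 1$, this product is primitive in $\Z[t]$; Gauss's lemma then upgrades the $\Q[t]$-divisibility to divisibility in $\Z[t]$, yielding $\Delta_n \mid p$ in $\Z[t^{\pm 1}]$.

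For the intertwining of involutions, I would verify that $f$ is a morphism of rings with involution. Since $m$ and $m+1$ are units in $\Z[1/n] = \Z[1/(m(m+1))]$, the element $f(t) = (m/(m+1), (m+1)/m)$ is a unit whose inverse equals $((m+1)/m, m/(m+1))$, and this is simultaneously $f(t^{-1})$ and $\overline{f(t)}$. Extending multiplicatively gives $f(\overline{p}) = \overline{f(p)}$ for every $p$, from which the norm identity and the commutative diagram are automatic. The main obstacle I anticipate is the Gauss's lemma step in the kernel computation, where one must carefully track primitivity after clearing the negative powers of $t$; everything else reduces to direct substitution.
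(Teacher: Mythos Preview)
Your proposal is correct and follows essentially the same route as the paper's proof. The only minor difference is organizational: the paper computes $\ker(f_1) = ((m+1)t-m)$ and $\ker(f_2) = (mt-(m+1))$ separately and then invokes the UFD property of $\Z[t^{\pm 1}]$ to conclude $\ker(f) = \ker(f_1)\cap\ker(f_2) = (\Delta_n)$, whereas you clear denominators, pass to $\Q[t]$, and use Gauss's lemma with the primitivity of $t\Delta_n$; these are equivalent arguments, and the intertwining verification is identical in both.
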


\begin{proof}
A short verification shows that
\[
\ker (f_1) = ((m+1)t - m) \quad \text{and} \quad \ker (f_2) = (mt - (m+1)).
\]
Since~$\Z[t^{\pm 1}]$ is a UFD and the polynomials~$(m+1)t - m$ and~$mt - (m+1)$ have no common factors,  we deduce that 
\begin{align*}
\ker(f) &= \ker(f_1) \cap \ker(f_2) = \left( ((m+1)t - m))(mt - (m+1)) \right ) = (\Delta_n).
\end{align*}
It is straightforward to that $f$ intertwines conjugation on $\Z[t, t^{-1}]$ with conjugation on $\Z[1/n] \oplus \Z[1/n]$. The claim then follows as in the proof of Proposition~\ref{prop:RankNullity}.
\end{proof}

The following proposition determines the image of~$f$, thus concluding our description of~$\Lambda_n$ as a subring of~$\Z[1/n] \oplus \Z[1/n]$.

\begin{proposition}
\label{prop:ImageSubring}
The image of~$f$ agrees with the following subring of~$\Z[1/n] \oplus \Z[1/n]$:
\[
S = \{(a, b) \in \Z[1/n] \oplus \Z[1/n] \ | \ b - a \in (2m+1) \cdot \Z[1/n] \text{ for some } m \in \Z\}.
\]
\end{proposition}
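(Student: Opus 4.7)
The plan is to prove the two containments $\im(f) \subseteq S$ and $S \subseteq \im(f)$ separately, with the key structural input being the identification of $\im(f)$ as the $\Z[1/n]$-submodule (with $\Z[1/n]$ embedded diagonally) spanned by $(1,1)$ and $f(t)$. The preliminary observation used throughout is that the diagonal copy of $\Z[1/n]$ sits inside $\im(f)$: a direct computation shows both coordinates of $f(t) + f(t^{-1})$ equal $(m^2 + (m+1)^2)/n = 2 + 1/n$, so $(1/n, 1/n) = f(t + t^{-1} - 2) \in \im(f)$, and hence $(r,r) \in \im(f)$ for every $r \in \Z[1/n]$.

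For $\im(f) \subseteq S$: one easily checks $S$ is a subring of $\Z[1/n] \oplus \Z[1/n]$, and since $f$ is a ring homomorphism with domain generated by $t$ and $t^{-1}$ over $\Z$, it suffices to verify $f(t), f(t^{-1}) \in S$. This is a direct computation: the difference of the two coordinates of $f(t)$ is $(m+1)/m - m/(m+1) = (2m+1)/n$, which lies in $(2m+1) \Z[1/n]$ because $n$ is a unit in $\Z[1/n]$; the computation for $f(t^{-1})$ is identical up to sign.

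For $S \subseteq \im(f)$: the main claim is that $\im(f) = \Z[1/n] \cdot (1,1) + \Z[1/n] \cdot f(t)$. The inclusion $\supseteq$ follows from the preliminary observation together with the identity $f\bigl((t+t^{-1}-2)^j \cdot t^k\bigr) = (1/n)^j \cdot f(t^k)$, which puts $(1/n)^j \cdot (1,1)$ and $(1/n)^j \cdot f(t)$ in the image. For $\subseteq$, I verify that $f(t)$ satisfies the monic quadratic relation $u^2 - (2+1/n)u + (1,1) = 0$ over the diagonal $\Z[1/n]$, so $f(t)^2$ is a $\Z[1/n]$-combination of $(1,1)$ and $f(t)$; using $f(t^{-1}) = (2 + 1/n)(1,1) - f(t)$ (obtained by dividing the quadratic by $f(t)$), one inductively expresses every $f(t^k)$, $k \in \Z$, as a $\Z[1/n]$-combination of $(1,1)$ and $f(t)$. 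Given $(x,y) \in S$, I then solve the linear system $a(1,1) + b \cdot f(t) = (x,y)$ over $\Z[1/n]$: subtracting the coordinates forces $b = n(y-x)/(2m+1)$, which lies in $\Z[1/n]$ precisely because $y - x \in (2m+1)\Z[1/n]$, and then $a = x - bm/(m+1)$ automatically lies in $\Z[1/n]$ since $1/(m+1) = m/n \in \Z[1/n]$.

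The main obstacle is the structural identification $\im(f) = \Z[1/n] \cdot (1,1) + \Z[1/n] \cdot f(t)$; once this is in place, matching $S$ with $\im(f)$ amounts to inverting a $2 \times 2$ matrix whose determinant is exactly $(2m+1)/n$, and the failure of this inverse to land in $\Z[1/n] \oplus \Z[1/n]$ is controlled precisely by the congruence condition defining $S$.
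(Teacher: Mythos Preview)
Your proof is correct and takes a somewhat different route from the paper's. Both begin with the same preliminary observation that the diagonal copy of $\Z[1/n]$ lies in $\im(f)$ via $f(t+t^{-1}-2)=(1/n,1/n)$. From there the paper works modulo the diagonal: it reduces to showing that $(0,b)\in\im(f)$ if and only if $b\in(2m+1)\Z[1/n]$, and establishes this by noting that any preimage of such an element must lie in $\ker(f_1)=((m+1)t-m)$ and computing $f((m+1)t-m)=(0,(2m+1)/m)$. You instead identify $\im(f)$ explicitly as the free rank-two $\Z[1/n]$-module $\Z[1/n]\cdot(1,1)+\Z[1/n]\cdot f(t)$ using the monic quadratic satisfied by $f(t)$, and then solve a $2\times2$ linear system whose determinant is $(2m+1)/n$. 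The paper's argument is marginally shorter and highlights the role of the principal generator of $\ker(f_1)$; your argument is more explicitly constructive, producing for each element of $S$ a concrete preimage in $\Z[t^{\pm1}]$, and makes transparent why the factor $2m+1$ appears (as the determinant of the change of basis from $\{(1,1),f(t)\}$ to the standard basis, up to a unit of $\Z[1/n]$).
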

\begin{proof}
Let $D$ be the diagonal $\{(a,a)\}$ of $\Z[1/n] \oplus \Z[1/n]$. Note that $D \subset \im(f)$: this follows by observing that $(1/n, 1/n)=f(t + t^{-1} - 2)$ and taking the image of linear combinations of powers of~$t + t^{-1} - 2$.

We claim that for every $b \in \Z[1/n]$,  we have $(0,b) \in \im(f)$ if and only if $(0,b) \in S$.
Suppose~$(0, b) = f(p)$ for some~$p \in \Lambda$. Then~$p \in \ker(f_1)$, so~$p$ is a multiple of~$(m+1)t - m$. Now,
\[
f((m+1)t - m) = \left(0, (m+1)\dfrac{m+1}{m} - m \right) = \left(0, \dfrac{2m+1}{m}\right).
\]
Thus,~$b$ is~$(2m+1)/m$ times an element of~$\Z[1/n]$. As~$1/m = (m+1)/n$, we see that~$b$ is~$2m+1$ times an element of~$\Z[1/n]$, as desired. Conversely, suppose that~$b = (2m+1) \cdot c$ for some~$c \in \Z[1/n]$. Let~$p$ be the linear combination of powers of~$t + t^{-1} - 2$ mapping to~$(c, c) \in D$. Then~$f$ maps~$((m+1)t - m) \cdot mp$ to~$(0, b)$, as desired.
This concludes the proof of the claim.

We now conclude that $\im(f)=S$.
Since $D \subset \im(f)$, subtracting off $(a, a) \in D$ shows that~$(a, b) \in \im(f)$ if and only if~$(0, b - a) \in \im(f)$.
Thanks to the claim, this is equivalent to~$(0,b-a) \in S$.
 Since the condition of being in~$S$ is clearly invariant under adding elements of~$D$,  this is in turn equivalent to $(a,b) \in S$.
\end{proof}

Now that we have described~$\Lambda_n$ as a subring of~$\Z[1/n] \oplus \Z[1/n]$, we are able to understand its group of unitary units using the injection $f \colon \Lambda \hookrightarrow \Z[1/n]\oplus \Z[1/n]$ and the projection $\operatorname{pr}_1 \colon \Z[1/n] \oplus \Z[1/n] \to \Z[1/n]$ onto the first component.

\begin{proposition}
\label{prop:UnitaryUnitsReducible}
If~$\Delta_n$ is reducible,  then the composition $\operatorname{pr}_1 \circ f$ induces an isomorphism
\[
U(\Lambda_n) \cong T := \{x \in \Z[1/n] \ | \ x \text{ is a unit in } \Z[1/n] \text{ and } x - x^{-1} \in (2m+1) \cdot \Z[1/n]\}.
\]
\end{proposition}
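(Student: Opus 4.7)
The plan is to unwind the definitions and combine Lemma~\ref{lem:SubringZ1n} with the description of $\im(f)$ from Proposition~\ref{prop:ImageSubring}. Since $f$ is an injective ring homomorphism intertwining the involutions, it restricts to an injective group homomorphism $U(\Lambda_n) \hookrightarrow U(\im(f))$, and the latter can be described concretely inside $\Z[1/n] \oplus \Z[1/n]$.

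Specifically, I will first observe that an element $(a,b) \in \Z[1/n] \oplus \Z[1/n]$ is a unitary unit if and only if $(a,b)(b,a) = (ab, ab) = (1,1)$, which forces $a \in \Z[1/n]^\times$ and $b = a^{-1}$. Combined with Proposition~\ref{prop:ImageSubring}, which identifies $\im(f)$ with the subring $S$ of pairs $(a,b)$ satisfying $b - a \in (2m+1) \cdot \Z[1/n]$, this shows that $f$ induces a bijection between $U(\Lambda_n)$ and the set
\[
\{(a, a^{-1}) \in \Z[1/n]^\times \oplus \Z[1/n]^\times \ | \ a^{-1} - a \in (2m+1) \cdot \Z[1/n]\}.
\]
Since $(2m+1) \cdot \Z[1/n]$ is closed under negation, this condition is equivalent to $a - a^{-1} \in (2m+1) \cdot \Z[1/n]$, and hence the composition $\operatorname{pr}_1 \circ f$ sends $U(\Lambda_n)$ bijectively onto $T$.

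Finally, I will note that $\operatorname{pr}_1 \circ f$ is a multiplicative homomorphism (being a composition of ring homomorphisms restricted to units), so this bijection is a group isomorphism. There is no substantial obstacle here: the content lies entirely in the prior identification of $\im(f)$ with $S$, which has already been carried out in Proposition~\ref{prop:ImageSubring}. The proof should occupy only a few lines.
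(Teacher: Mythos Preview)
Your proposal is correct and follows essentially the same approach as the paper: both use Proposition~\ref{prop:ImageSubring} to identify $\im(f)$ with $S$, observe that $N(a,b)=(ab,ab)=(1,1)$ forces $b=a^{-1}$, and then project onto the first coordinate. Your write-up is slightly more explicit about the sign symmetry and the multiplicativity of $\operatorname{pr}_1 \circ f$, but the argument is the same.
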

\begin{proof}
Since the norm on $(\Z[1/n] \oplus \Z[1/n])^\times$ is given by $N(a,b)=(ab,ab)$,
Proposition~\ref{prop:ImageSubring} implies that $f$ induces a group isomorphism 
\begin{align*}
U(\Lambda) &\cong  \{(a,b) \in S^\times \ | \ N(a, b) = (ab, ab) = (1, 1)\} \\
&= \{(x,x^{-1}) \in \Z[1/n] \oplus \Z[1/n] \ | \ x \text{ is a unit in } \Z[1/n] \text{ and } x - x^{-1} \in (2m+1) \cdot \Z[1/n]\}. 
\end{align*}
The proposition now follows because $\operatorname{pr}_1 $ maps this group isomorphically onto $T$.
\end{proof}

We now complete the proof of Theorem~\ref{thm:MainNumberTheory} for reducible~$\Delta_n$ and $n \neq 2.$

\begin{proposition}\label{prop:ReducibleRank}
If~$\Delta_n$ is reducible,  then $\operatorname{rk}(U(\Lambda_n)/\{t^k\}_{k \in \Z}) = \Omega(n) - 1$. 
In particular $\operatorname{rk}(U(\Lambda_n)/\{t^k\}_{k \in \Z})$ is finite if and only if $n=2$.
\end{proposition}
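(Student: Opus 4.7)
The plan is to leverage Proposition~\ref{prop:UnitaryUnitsReducible}, which identifies $U(\Lambda_n)$ with the subgroup
\[
T = \{x \in \Z[1/n]^\times : x - x^{-1} \in (2m+1)\Z[1/n]\} \subset \Z[1/n]^\times,
\]
where $n = m(m+1)$ and $m \geq 1$. The first step is to observe that since $\gcd(2m+1,m) = \gcd(2m+1,m+1) = 1$, the integer $2m+1$ is coprime to $n$, so $n$ is already invertible modulo $2m+1$. Consequently reduction mod $2m+1$ descends to a surjective ring map $\rho \colon \Z[1/n] \twoheadrightarrow \Z/(2m+1)$ with kernel $(2m+1)\Z[1/n]$; multiplying the defining condition of $T$ by the unit $x$ rewrites it as $x^2 \equiv 1 \pmod{2m+1}$. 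Hence $T$ is precisely the preimage under the induced map $\rho^\times \colon \Z[1/n]^\times \to (\Z/(2m+1))^\times$ of the $2$-torsion subgroup of the finite group $(\Z/(2m+1))^\times$.

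From this reformulation, $T$ sits as a finite-index subgroup of $\Z[1/n]^\times$ and therefore has the same rank. The argument in the proof of Proposition~\ref{prop:RankIm(N)} gives $\Z[1/n]^\times \cong \{\pm 1\} \times \Z^{\Omega(n)}$, so $\operatorname{rk}(T) = \Omega(n)$. Next I would check that $t$ has infinite order in $U(\Lambda_n) \cong T$: under $\operatorname{pr}_1 \circ f$ the element $t$ maps to $m/(m+1)$, whose image in the free quotient $\Z^{\Omega(n)}$ (in the valuation coordinates indexed by the primes dividing $n$) is nonzero, since the prime factorizations of the coprime integers $m$ and $m+1$ partition the primes dividing $n$ into two disjoint sets with nontrivial valuations. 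Therefore $\{t^k\}_{k\in\Z}$ is an infinite cyclic subgroup of $T$, so quotienting by it drops the rank by exactly one, yielding $\operatorname{rk}(U(\Lambda_n)/\{t^k\}_{k\in\Z}) = \Omega(n) - 1$.

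The finiteness clause then follows by noting that this rank vanishes precisely when $\Omega(n) = 1$, i.e., when $n$ is a prime power; since $n = m(m+1)$ with $m \geq 1$ and $\gcd(m,m+1) = 1$, this forces $m = 1$ and hence $n = 2$. The main obstacle I anticipate is the opening reformulation --- recognising that the condition $x - x^{-1} \in (2m+1)\Z[1/n]$ is equivalent, after multiplying by the unit $x$, to a square-root-of-unity congruence modulo $2m+1$, and exploiting the coprimality of $2m+1$ and $n$ to identify $\Z[1/n]/(2m+1)\Z[1/n]$ with the finite ring $\Z/(2m+1)$. Once this step is in place, the rank computation and the verification that $t$ has infinite order are routine consequences of the explicit description of $\Z[1/n]^\times$ provided by Proposition~\ref{prop:RankIm(N)}.
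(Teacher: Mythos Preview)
Your proof is correct and follows essentially the same approach as the paper: both arguments hinge on the coprimality of $2m+1$ with $n$ to show that $T$ has finite index in $\Z[1/n]^\times$, then use that $t \mapsto m/(m+1)$ has infinite order. Your reformulation of $T$ as the preimage of the $2$-torsion in $(\Z/(2m+1))^\times$ is a slightly cleaner packaging of the paper's explicit claim that $p_i^g \in T$ for $g = |(\Z/(2m+1))^\times|$, but the underlying mechanism is identical.
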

\begin{proof}
We first show that
\[
\operatorname{rk}(T) = \operatorname{rk}(\Z[1/n]^\times) = \Omega(n).
\]
The second equality was established in Proposition~\ref{prop:RankIm(N)}. Since we proved in Proposition~\ref{prop:RankIm(N)} that
\[
\Z[1/n]^\times \cong \lbrace \pm 1 \rbrace \times \langle p_1,\ldots,p_{\Omega(n)} \rangle, 
\]
where $\smash{n = p_1^{\alpha_1},\ldots,p_{\Omega(n)}^{\alpha_{\Omega(n)}}}$, the first equality immediately follows from the claim:
\begin{claim}
There exists a $g \in \N$ such that for each prime divisor~$p_i$ of $n$, we have~$\smash{p_i^{g} \in T}$.
\end{claim}
\begin{proof}
We need to show that there exists a $g \in \N$ such that $p_i^g \in T$ for every $i=1,\ldots r$.
By definition of $T$, this is equivalent to proving the existence of a~$g \in \N$ such that~$p_i^g-(1/p_i^g) \in (2m+1)\Z[1/n]$ for each $i$.
Observe that
\[
p_i^{g} - \frac{1}{p_i^{g}} = \frac{p_i^{2g} - 1}{p_i^{g}}.
\]
Note that since $p_i^g$ is always in the saturation of $\{n^k\}$, we have $1/p_i^g \in \Z[1/n]$. It thus suffices to show that there exists a~$g \in \N$ such that $p_i^{2g} - 1 \in (2m+1)\Z[1/n]$ for each $i$; i.e.~$p_i^{2g} \equiv 1$ mod $2m+1$.
Now, $p_i$ is a divisor of either $m$ or $m+1$, both of which are coprime to $2m + 1$. Hence $p_i$ is coprime to $2m+1$,  we can take $g$ to be the cardinality of~$(\Z/(2m+1)\Z)^\times$.
This concludes the proof of the claim.
\end{proof}

Now that we have calculated $\rk(U(\Lambda_n))$, we turn to $U(\Lambda_n)/\{t^k\}_{k \in \Z}$. Tracing through the definition of $f$ and the isomorphism of Proposition~\ref{prop:UnitaryUnitsReducible}, we see that $\{t^k\}_{k \in \Z}$ corresponds to the subgroup
\[
\left\langle \dfrac{m}{m+1}\right\rangle \leq T.
\]
Note that since $T \subset \Z[1/n]$, the image of~$t$ has infinite order and so $\rk (\{t^k\}_{k \in \Z})=1$.
It follows that~$U(\Lambda_n)/\{t^k\}_{k \in \Z}$ has rank~$\Omega(n) - 1$, as desired.
The last sentence of the proposition follows because if~$m > 1$, then $n = m(m+1)$ has at least two distinct prime factors.
\end{proof}

Thanks to Proposition~\ref{prop:ReducibleRank}, it only remains to determine the count when $n=2$.

\begin{proposition}
\label{prop:n=2}
For $n=2$, the group~$U(\Lambda_n)/\{t^k\}_{k \in \Z}$ is isomorphic to $\Z/2\Z$.
\end{proposition}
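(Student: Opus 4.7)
The plan is to apply Proposition~\ref{prop:UnitaryUnitsReducible} directly with $n=2$, which corresponds to $m=1$ (so $2m+1=3$), and then perform an explicit finite calculation. Since we already know from Proposition~\ref{prop:ReducibleRank} that the rank of $U(\Lambda_2)/\{t^k\}_{k\in\Z}$ is $\Omega(2)-1=0$, the quotient is finite, and the only remaining task is to pin down its order.

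First, I would invoke Proposition~\ref{prop:UnitaryUnitsReducible} to identify
\[
U(\Lambda_2) \cong T = \{x \in \Z[1/2]^\times \mid x - x^{-1} \in 3\cdot \Z[1/2]\}.
\]
Next, I would note that the units of $\Z[1/2]$ are precisely $\{\pm 2^k : k \in \Z\}$. For any such $x = \pm 2^k$, the element $x-x^{-1}$ equals $\pm(2^k - 2^{-k})$, and multiplying by the unit $2^k$ shows that $x-x^{-1} \in 3\cdot\Z[1/2]$ if and only if $2^{2k}-1 \in 3\cdot\Z[1/2]$, which in turn (since $3$ is coprime to $2$) is equivalent to $2^{2k}\equiv 1 \pmod{3}$. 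This congruence holds for every $k \in \Z$ since $2^2 \equiv 1 \pmod 3$. Therefore
\[
T = \Z[1/2]^\times = \{\pm 2^k : k\in\Z\}.
\]

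Finally, I would identify the subgroup $\{t^k\}_{k\in\Z}$ under the isomorphism. By the definition of $f_1$ in Construction~\ref{cons:IsoReducible} with $m=1$, the element $t$ maps to $m/(m+1)=1/2$ under $\operatorname{pr}_1\circ f$, so $\{t^k\}_{k\in\Z}$ corresponds to the subgroup $\langle 1/2\rangle = \{2^k : k\in\Z\}\leq T$. The quotient is then
\[
T/\{t^k\}_{k\in\Z} \;\cong\; \{\pm 2^k\}/\{2^k\} \;\cong\; \Z/2\Z,
\]
which is the claimed statement. There is no essential obstacle here: the work is already done by Propositions~\ref{prop:UnitaryUnitsReducible} and~\ref{prop:ReducibleRank}, and the only concrete task is the short modular arithmetic check that $2^{2k}\equiv 1\pmod 3$, together with the identification of the image of $t$.
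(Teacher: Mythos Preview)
Your proof is correct and follows essentially the same approach as the paper: both identify $U(\Lambda_2)$ with $T$ via Proposition~\ref{prop:UnitaryUnitsReducible}, verify that $T=\Z[1/2]^\times=\{\pm 2^k\}$ by checking the congruence condition modulo $3$, observe that $t\mapsto 1/2$ under $\operatorname{pr}_1\circ f$, and conclude the quotient is $\Z/2\Z$. Your version is slightly more explicit in checking all exponents $k$, while the paper simply checks the generator $2$, but the arguments are otherwise identical.
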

\begin{proof}
We calculate this quotient explicitly under the isomorphism of Proposition~\ref{prop:ReducibleRank}, starting with $T$ and then quotienting out by the image of $\{ t^k \}$.
For $n=2$, we have that~$\Z[1/n]^\times$ is generated (up to multiplication by~$\pm 1$) by~$2$. 
Since~$2 - 1/2 = 3/2 \in 3 \cdot \Z[1/n]$,  we deduce that $U \cong T = \Z[1/n]^\times$. 
Since the isomorphism $\operatorname{pr}_1 \circ f$ from Proposition~\ref{prop:ReducibleRank} satisfies~$\operatorname{pr}_1 \circ f(t)=1/2$, the quotient~$U/\{t^k\}_{k \in \Z}$ is isomorphic to $\Z/2\Z$.
\end{proof}

\def\MR#1{}
\bibliography{BiblioCP2}
\end{document}